\newcolumntype{P}[1]{>{\centering\arraybackslash}p{#1}}
\newtheorem{theorem}{Theorem}[section]
\newtheorem{corollary}[theorem]{Corollary}
\newtheorem{lemma}[theorem]{Lemma}
\newtheorem{proposition}[theorem]{Proposition}
\theoremstyle{definition}
\newtheorem{definition}[theorem]{Definition}
\newtheorem{example}[theorem]{Example}
\newtheorem{remark}[theorem]{Remark}
\newtheorem{note}{Note}
\newcommand{\sgn}{\mathop{\mathrm{sgn}}}
\title[GOWA robustness for uncertain optimization problems]{Generalized Ordered Weighted Aggregation Robustness to Solve Uncertain Single Objective Optimization Problems}
\author[N. Kishor]{Nand Kishor}
\address[N. Kishor]{Department of Mathematical Sciences, Indian Institute of Technology (BHU), Varanasi 221005, India}
\email{\tt nandkishor.rs.mat20@itbhu.ac.in}
\author[D. Ghosh]{Debdas Ghosh}
\address[D. Ghosh]{Department of Mathematical Sciences, Indian Institute of Technology (BHU), Varanasi 221005, India }
\email{{\tt debdas.mat@iitbhu.ac.in}}
\author[X. Zhao]{Xiaopeng Zhao$^*$}
\address[X. Zhao]{Corresponding Author, School of Mathematical Sciences, Tiangong University, Tianjin, 300387, China}
\email{{\tt zhaoxiaopeng.2007@163.com}}
\keywords{Robust optimization,  Uncertain Optimization, Generalized Ordered Weighted Aggregation Operator, Locally Lipschitz, Clarke Subdifferential}
\subjclass[2020]{90C26, 90C29, 93C10}
\begin{document}

\begin{abstract}

Robust optimization aims to find optimum points from the collection of points that are feasible for every possible scenario of a given uncertain set. An optimum solution to a robust optimization problem is commonly found by the min-max robust counterpart or by the \emph{best out of the worst-cases} analysis. In this article, we introduce a new counterpart with the help of the \emph{generalized ordered weighted aggregation} (GOWA) operator to solve uncertain single objective optimization problems. After introducing GOWA robustness, we analyze a few elementary properties of the GOWA robust objective function, like continuity, monotonicity, coerciveness, local Lipschitz property, and subdifferential regularity. An approach to computing the Clarke subdifferential of the GOWA robust objective function is also provided. We discuss the relationship between the concept of GOWA robustness with other existing robustness---flimsily, highly, min-max, light, and min-min robustness. We show that in a particular case, GOWA robustness reduces to the commonly used min-max robustness. The entire paper is supported by several geometrical and numerical illustrations.

\end{abstract}

\maketitle
\section{Introduction}\label{Intro}

The robust optimization approach handles optimization problems where the involved data or parameters are uncertain but belong to a known uncertainty set. It is common to have uncertain data in realistic optimization problems  \cite{Robustapplication,Yu,Tiedemann,Eltoukhy}. Robust optimization \cite{Kouvelis} and stochastic optimization \cite{Birge} are two popular ways to handle data uncertainty in optimization problems. In stochastic optimization, it is essential to know the probability distribution (at least an estimate of it) of the uncertain data. On the other hand, robust optimization methods assume that the uncertain data lies in an uncertainty set rather than the actual probability distribution is known. Robust optimization is popular because several classes of uncertain optimization problems can be made computationally tractable using the approaches of robust optimization \cite{Ben}.

In the recent past, robust optimization has been widely used in realistic optimization problems, e.g., portfolio optimization \cite{Fliege}, route planning \cite{Kuhn}, public transport \cite{Robustness}, irrigation network \cite{Irrigation}, etc. An NETLIB case study of robust optimization models can be found in \cite{Robustapplication}. Study results \cite{Robustapplication} indicate that small perturbations of data can severely affect the feasibility properties of real-world optimization problems and that the robust optimization methodology can be successfully used to overcome this problem. Commonly, robust optimization searches for optimum solutions that are feasible for all scenarios.

In this paper, we aim for analyzing single-objective uncertain optimization problems. There are several approaches to finding a robust counterpart for solving such a problem, and one of the most familiar ones is the min-max (or strict) robust counterpart \cite{ Ben, soyster}. The min-max robust counterpart was originally introduced by Soyster \cite{soyster} and extensively studied by Ben-Tal and Nemirovski \cite{Ben, Ben-Tal, Ben-Tal A, Ben-Tal B}. In the context of the min-max approach, a robust minimum solution is such a point that minimizes the objective function in the worst case of all scenarios, and that is a feasible point under all uncertain scenarios. In the recent past, many alternatives to the min-max approach have been introduced. Goerigk and Schöbel \cite{Goerigk M} reported a recent overview of robustness concepts for single-objective problems. Light robustness is a concept introduced by Fischetti and Monaci \cite{Fischetti} and generalized by Sch{\"o}bel \cite{Schobel} that can be used as an alternative to min-max or regret robustness. In this concept, only solutions that would be suitable for the most likely scenarios are considered. By light robustness, it is thus possible to protect the best-case scenarios against perturbations because of its optimistic perspective. In selecting this solution, reliability is taken into account. There are also alternatives such as adjustable robustness \cite{Benn}, soft robustness \cite{Bertsmimas}, and recovery robustness \cite{Liebchen,Erera,Goerigk}. Sch{\"o}bel \cite{Schobel} further introduced min-min robustness, which gives the minimum most objective value for some scenarios. However, this method has the disadvantage that the optimum solution may not be feasible for all scenarios.

Although the min-max robust counterpart is the most popular among all the existing robust counterparts, the \emph{best out of the worst} (pessimistic) approach makes the min-max robustness restrictive because the min-max robust optimum solution does not minimize the objective value as much as possible for each scenario. We aim to alleviate this drawback of the min-max robust counterpart and propose a new robust counterpart to solve single-objective uncertain optimization problems. To this end, instead of minimizing the maximum function of the objective functions (min-max approach), we find an ordering of the objective functions in the feasible set and then minimize an aggregated function of the ordered functions.

For a given ordered collection of criteria, the employment of a suitable aggregating operator is of considerable importance in decision sciences. To choose an aggregation function, the ordered relationship between the criteria is a primary factor. The situation in which we desire that all criteria be satisfied is one extreme (the min-max approach crudely falls into this category). Another extreme is that we desire to satisfy only one of the criteria (the flimsily approach crudely falls into this category). Yager in \cite{Yager} introduced a family of operators called ordered weighted averaging (OWA) operators that provide an aggregation between these two extremes. The order weights depend on the decision maker’s degree of optimism \cite{Yager_2}.  Based on the weights for the OWA operator, Yager \cite{Yager_1} defined the degree of optimism. In \cite{Yager}, Yager further extended the ordered weighted averaging (OWA) operator to provide a new class of operators called the \emph{generalized OWA} (GOWA) operators.

In this paper, we employ GOWA operator to aggregate an ordering of the objective functions of all the scenarios of a given single objective uncertain objective function and propose a unified robustness called GOWA robustness. We further relate the proposed GOWA robustness with min-max robustness, light robustness, flimsily robustness, and min-min robustness. Neatly, the contributions of the paper are as follows:

\begin{itemize}
    \item We give a new method, called GOWA robustness, to solve uncertain single objective optimization problems and interpret the method with mathematical and geometrical details.
    \item It is found that, in particular cases, the objective function of GOWA robustness is equivalent to min-max and min-min robust counterparts. 
    \item In GOWA robustness, the optimal objective value is observed to be always lying in between the min-min and min-max robust optimal values. 
    \item As GOWA robust objective function is commonly nonsmooth, we analyze the Lipschitz property and the Clarke subdifferentiability of the newly introduced GOWA robust objective function.
\end{itemize}

The rest of the article is presented in the following manner. In Section \ref{section:2}, a definition of uncertain single objective optimization problems is provided. In Section \ref{section:3}, we propose the concept of the  GOWA robustness; alongside, a few necessary results on GOWA robustness with geometrical explanation are also given. Next, in Section \ref{relation}, we relate GOWA robustness with some other existing robustness. Further, the Clarke subdifferential for the objective function of GOWA robustness is presented in Section \ref{subdiffrential}. Lastly, the conclusion and future scopes are given in Section \ref{conclusion}.


\section{Preliminaries} \label{section:2}
An uncertain single objective optimization problem
is a family of parameterized problems $\mathcal{P}(\Omega)=\{\mathcal{P}(\zeta): \zeta \in \Omega\},$ where 
\begin{align*}
\mathcal{P}(\zeta)~~\left\{\begin{aligned}
&\min    && f(x,\zeta)\\
&\text{subject to} \quad && x \in \mathcal{X} \subseteq \mathbb{R}^n.
\end{aligned}\right.
\end{align*}
For the class of problems $\mathcal{P}(\Omega),$ throughout the paper, we assume that
\begin{enumerate}
    \item[(i)] uncertainties in the problem formulation are given by finitely many $\zeta$'s, which constitute the set $\Omega \subseteq \mathbb{R}^m.$ Precisely, let $\Omega = \{\zeta_1,\zeta_2,\ldots,\zeta_p\}.$
    \item[(ii)] The feasible set $\mathcal{X}$ is independent of uncertainties $\Omega$, and unless otherwise stated, it is a compact subset of $\mathbb{R}^n.$
    \item[(iii)] For each given $\zeta \in \Omega,~ f(x,\zeta) \geq 0$ for all $x \in \mathcal{X}.$
\end{enumerate}
An element in $\Omega$ is referred to as a scenario. Note that the scenarios in $\Omega$ influence the values of the non-negative objective function $f:\mathcal{X} \times \Omega \to \mathbb{R}.$

In the following, we provide several concepts of robustness that transform the uncertain single objective optimization problem $\mathcal{P}(\Omega)$ into a deterministic optimization problem called its robust counterpart.

\begin{definition} \cite{Robustness} 
For a given uncertain single objective optimization problem $\mathcal{P}(\Omega),$ a solution $x\in \mathcal{X}$ is called flimsily robust optimal solution for $\mathcal{P}(\Omega)$ if it is optimal for at least one $\zeta \in \Omega.$
\end{definition}

\begin{definition} \cite{Robustness}
For a given uncertain single objective optimization problem $\mathcal{P}(\Omega),$ an element $\bar{x} \in \mathcal{X}$ is called highly robust optimal solution for $\mathcal{P}(\Omega)$ if it is optimal for all $\zeta \in \Omega.$
\end{definition}

\begin{definition} \cite{min-max}
For a given uncertain single objective optimization problem $\mathcal{P}(\Omega),$ an element $\bar{x} \in \mathcal{X}$ is called a min-max robust optimal solution to $\mathcal{P}(\Omega)$ if it is an optimal solution to 
 \begin{align}\label{min-max}
\left\{\begin{aligned}
&\min   &&\underset{\zeta\in \Omega}{\max}~ f(x,\zeta)\\
&\text{subject to} \quad && x \in \mathcal{X}.\\
\end{aligned}\right.
\end{align}
This means a min-max robust optimal solution minimizes the worst case of the objective function under all possible scenarios.  The problem \eqref{min-max} is called the min-max robust counterpart of the problem $\mathcal{P}(\Omega)$.  
\end{definition}

Let us recall the following three definitions to analyze the subdifferentiability of $\phi(x)$.

\begin{definition} \cite{Clarke}
Let $f : \mathbb{R}^n \to \mathbb{R}$ be a locally Lipschitz continuous function at $\bar{x} \in \mathbb{R}^n$. The Clarke generalized directional derivative of $f$ at $\bar{x}$ in the direction $d \in \mathbb{R}^n$
is defined by
\begin{align*}
    f^\circ(\bar{x};d)=\limsup_{
   \substack{x\rightarrow \bar{x}\\ t ~\downarrow~ 0}}\frac{f(x+td)-f(x)}{t}.
\end{align*}
\end{definition}

 \begin{definition} \cite{Clarke}
 Let $f: \mathbb{R}^n \to \mathbb{R}$ be a locally Lipschitz continuous function at a point $\bar{x} \in \mathbb{R}^n.$ Then, the Clarke subdifferential of $f$ at $\bar{x}$ is the set
 $$\partial f(\bar{x})= \{\eta \in \mathbb{R}^n : f^{\circ}(\bar{x};d) \geq \eta^{\top} d \text{ for all } d \in \mathbb{R}^n \}.$$
Each vector $\eta \in \partial f(\bar{x})$ is called a subgradient of $f$ at $\bar{x}.$ 
 \end{definition}

\begin{definition} \cite{Clarke}
A function $f : \mathbb{R}^n \to \mathbb{R}$ is said to be subdifferentially regular at $\bar{x} \in \mathbb{R}^n$ if it is locally Lipschitz continuous at $\bar{x}$, the (conventional) directional derivative $f' (\bar{x}; d)$ exists for all $d \in \mathbb{R}^n$, and $f'(\bar{x}; d)=f^\circ(\bar{x}; d).$ If $f$ is subdifferentially regular at each point of $\bar{x} \in \mathcal{X},$ then $f$ is said to be regular on $\mathcal{X}.$
\end{definition}

\section{Introduction to GOWA robustness}\label{section:3}
To define GOWA robust counterpart of  $\mathcal{P}(\Omega)$, we adhere to the given ordered indexing of the scenarios in the given finite uncertainty set $\Omega = \{\zeta_1, \zeta_2, \ldots, \zeta_p\}$, i.e., $\Omega$ is a collection of $p$ elements of $\mathbb{R}^m$ which are in ordered indexing. 

Consider the collection of functions $f(x, \zeta_1)$, $f(x, \zeta_2), \ldots, f(x, \zeta_p)$ on $\mathcal{X}$. In min-max robust counterpart, we seek a minimum point of the function $m(x) = \max\{f(x, \zeta_1), f(x, \zeta_2), \ldots, f(x, \zeta_p)\}$ on $\mathcal{X}$. As the min-max approach is found to be restrictive, instead of minimizing $m(x)$, we find an ordering of the functions $f(x, \zeta_1), f(x, \zeta_2), \ldots, f(x, \zeta_p)$, and then go for minimizing a weighted average function of the ordered functions. To this end, we wish to order the functions  $f(x, \zeta_1)$, $f(x, \zeta_2), \ldots, f(x, \zeta_p)$ on $\mathcal{X}$ in terms of their function values. Commonly, there will be intersections of the graph of these functions. So, we cannot expect to find an ordering of the functions (e.g., $f(x, \zeta_2) \ge f(x, \zeta_5) \ge f(x, \zeta_1) \ge f(x, \zeta_p)$ for all $x \in \mathcal{X}$) which is applicable throughout the set $\mathcal{X}$.  
Thus, with the help of the functions $f(x, \zeta_1)$, $f(x, \zeta_2), \ldots, f(x, \zeta_p)$ on $\mathcal{X}$, we introduce a collection of auxiliary functions $s_i : \mathcal{X} \to \mathbb{R},$ $i=1,2,\ldots,p,$ so that 
\[s_1(x) \geq s_2(x) \geq \cdots \geq s_p(x) \text{ for all } x \in \mathcal{X}.\]
We define the functions $s_1, s_2, \ldots, s_p$ on $\mathcal{X}$ as follows: 
\begin{align}
&s_1(x)= \max\{f(x,\zeta_1),f(x,\zeta_2),\ldots,f(x,\zeta_p)\}~~\text{and}\label{s_1}\\
&s_i(x)= \max\bigg\{\{f(x,\zeta_1),f(x,\zeta_2),\ldots,f(x,\zeta_p)\}\mathbin{\Big\backslash}\{s_1(x),s_2(x),\ldots,s_{i-1}(x)\}\bigg\},~i=2,3,\ldots,p.\label{s_i}
\end{align}
For a given $\bar x \in \mathcal{X}$, in case there are more than one indices $j_1, j_2, \ldots, j_i$ with $1 \le j_1 < j_2 < \cdots < j_i \le p$  for which 
\begin{align*}
    f(\bar x, \zeta_{j_1}) = f(\bar x, \zeta_{j_2}) = \cdots = & f(\bar x, \zeta_{j_i})\\
    = & \max\bigg\{\{f(x,\zeta_1),f(x,\zeta_2),\ldots,f(x,\zeta_p)\}\mathbin{\Big\backslash}\{s_1(x),s_2(x),\ldots,s_{i-1}(x)\}\bigg\}, 
\end{align*}
then we take $s_i(\bar x)$ to be 
\begin{equation}\label{lowest_index} 
s_i(\bar x) = f(\bar x, \zeta_{j_1}).
\end{equation}
Instead of ${j_1}$, we can use any of the other indices $j_2, j_3, \ldots, j_i$. However, to maintain uniformity and convenience of discussion, we adhere to the equation (\ref{lowest_index}) with the lowest index number $j_1$. As the use of more than one indices out of $j_1, j_2, \ldots, j_i$ may lead to issues on well-definedness of $s_{i+1}(\bar x)$ (see Example \ref{s_i_plus_one_is_not_well_defined}), we adhere to the use of `only one' index of $\zeta$ in defining $s_i(\bar x)$.
 
 From the definition of $s_i$'s, we can easily observe that $s_1(x) \geq s_2(x) \geq\cdots\geq s_p(x)$ for all $x \in \mathcal{X}.$

\begin{example}\label{s_i_plus_one_is_not_well_defined}
Let the uncertainty set be $\Omega=\{\zeta_1, \zeta_2\}$ and the feasible set be $\mathcal{X}=[0,2]$.
We define $f:\mathcal{X}\times\Omega \to \mathbb{R}$ by
$f(x,\zeta_1)= (x-2)^2,$ and$ ~f(x,\zeta_2)= e^{x-1}.$ Graphs of $f(x,\zeta_1),~f(x,\zeta_2)$ and $s_1(x)$ are drawn in Figures \ref{s_2_is_not_defined_1} and  \ref{s_2_is_not_defined_2}, respectively. The auxiliary functions $s_1(x)$ and $s_2(x)$ are given by 
\begin{align*}
    s_1(x)
    & = \begin{dcases}
        (x-2)^2, & ~\text{if}~~ x \in [0,1] \\
        e^{x-1}, & ~\text{if}~~ x \in  [1,2],~~~\text{and}~~~
    \end{dcases}\\ 
    s_2(x)& = \begin{dcases}
        e^{x-1}, & ~\text{if}~~ x \in [0,1] \\
        (x-2)^2, & ~\text{if}~~ x \in  [1,2].
    \end{dcases}
\end{align*}
Notice that at $\bar{x}=1$, $f(\bar{x},\zeta_1)=f(\bar{x},\zeta_2).$ For the value of $s_1(1)$, if we use both of the indices $1$ and $2$, i.e., if we consider $s_1(1)$ to be both of $f(1, \zeta_1)$ and $f(1, \zeta_2)$, then \[ \bigg\{\{f(1, \zeta_1), f(1,\zeta_2)\}\mathbin{\Big \backslash}\{s_1(1)\}\bigg\}=\emptyset,\] and hence $s_2(x)$ will not defined at $\bar{x}=1$ (see Figure \ref{s_2_is_not_defined_3}). 
So, we adhere to apply the equation \eqref{lowest_index}, and take 
$s_1(1) = f(1, \zeta_1) = 1$. Then, $s_2(1)$ is well-defined and 
$s_2(1) = \max \big\{\{f(1, \zeta_1), f(1, \zeta_2)\} \setminus \{s_1(1)\}\big\} = f(1, \zeta_2) = 1$.

\noindent
\end{example}

\begin{figure}[H]
    \centering
    \subfloat[The objective functions for different scenarios\label{s_2_is_not_defined_1}]{\includegraphics[scale=0.4]{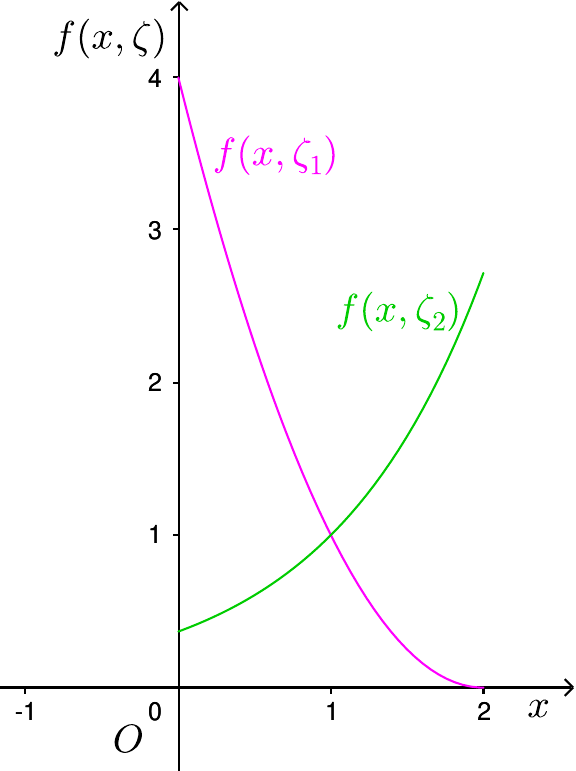}}
    \qquad \qquad 
    \subfloat[The function $s_1$\label{s_2_is_not_defined_2}]{\includegraphics[scale=0.4]{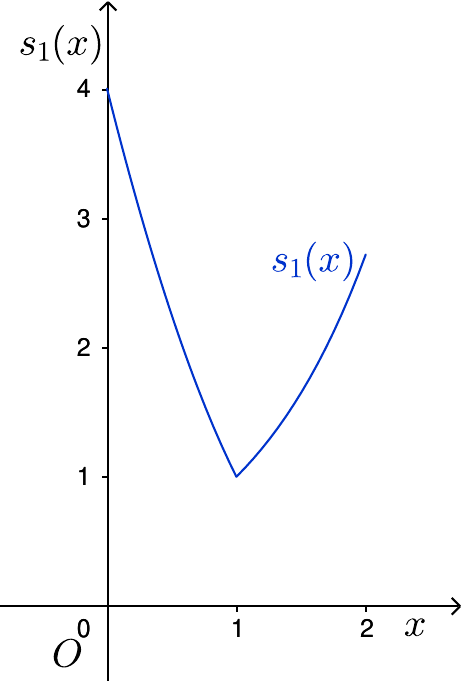}}
    \qquad \qquad 
    \subfloat[The function $s_2$\label{s_2_is_not_defined_3}]{\includegraphics[scale=0.4]{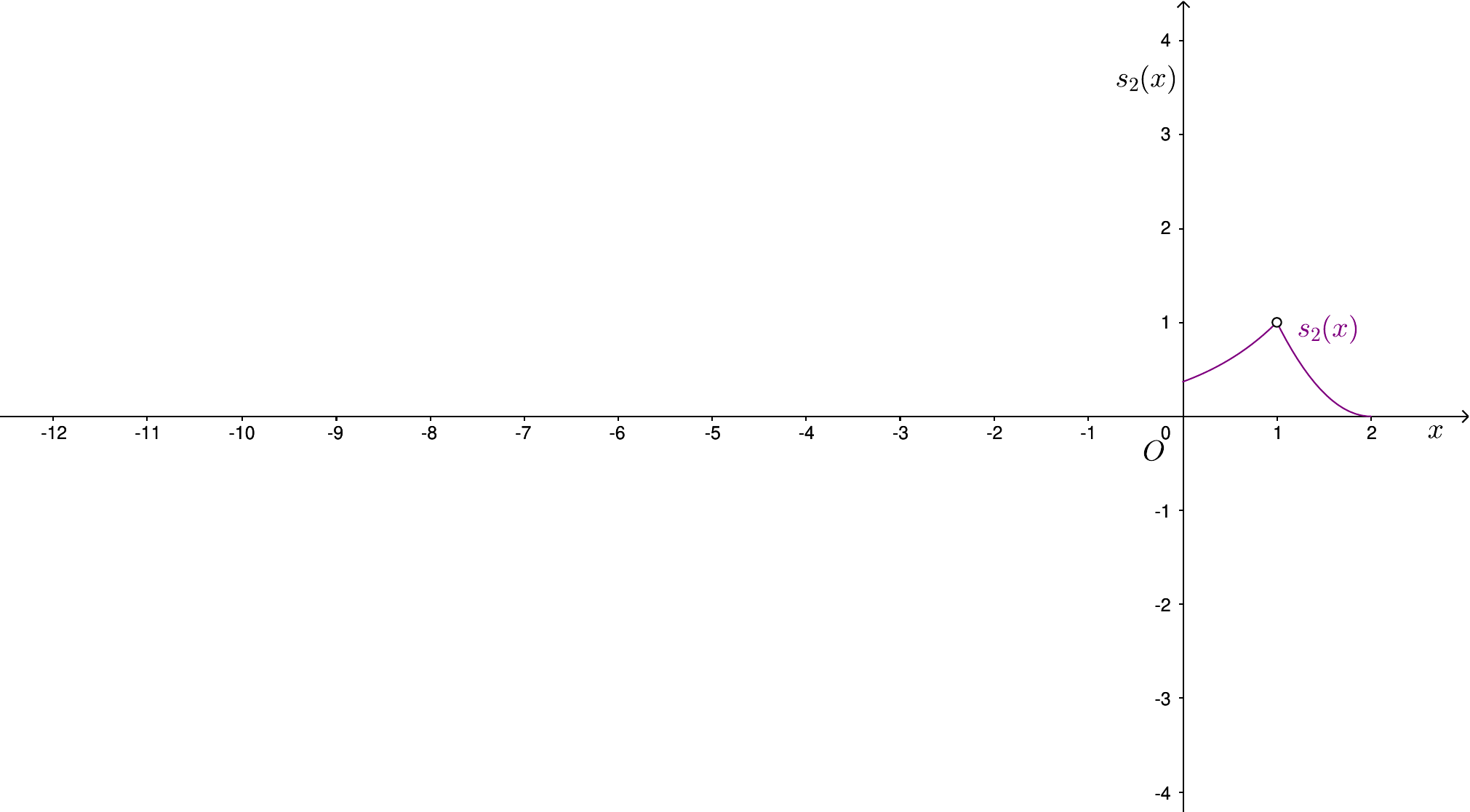}}
    \caption{The objective function and $s_1, s_2$ in Example \ref{s_i_plus_one_is_not_well_defined}}
\end{figure}

\begin{definition} \label{GOWAdefn}
(GOWA robust counterpart).  
An element $\bar{x} \in \mathcal{X}$ is called a GOWA robust optimal solution to $\mathcal{P}(\Omega)$ if it is an optimal solution to
 \begin{align}\label{GOWA}
\left\{\begin{aligned}
&\min   && \phi_{\lambda w}(x)\\
&\text{subject to} \quad && x \in \mathcal{X},
\end{aligned}\right.
\end{align}
where $\phi_{\lambda w}$ is defined by
\begin{align}\label{phi}
\phi_{\lambda w}(x)=\left(\sum_{i=1}^{p}w_{i}(s_{i}(x))^{\lambda}\right)^{\frac{1}{\lambda}},~~~x \in \mathcal{X},
\end{align}
for a given positive $\lambda > 0$ and $w_i \geq 0,~i=1,2,\ldots,p,$ with $\sum_{i=1}^{p}w_{i}=1.$ A solution to \eqref{GOWA} is called a GOWA robust optimal solution to $\mathcal{P}(\Omega).$
\end{definition}

\begin{note}\label{lambdazero}
Suppose for any given $\zeta \in \Omega,~f(x,\zeta) > 0$ for all $x\in \mathcal{X}.$ Let $\phi_{\lambda w}$ be as defined by \eqref{phi}. Then, for any $x \in \mathcal{X},$ we have 
\begin{enumerate}
    \item [(i)]  $\underset{\substack{\lambda \to 0+}}{\lim}\phi_{\lambda w}(x)= \prod_{i=1}^{p}(s_i(x))^{w_i},$ because
\begin{align*}
    \log\left(\lim_{\lambda \to 0+} \phi_{\lambda w}(x)\right) &= \lim_{\lambda \to 0+} \frac{\log\left(\sum_{i=1}^{p}w_{i}(s_i(x))^{\lambda}\right)}{\lambda}
     = \lim_{\lambda \to 0+} \frac{\sum_{i=1}^{p}w_{i}(s_i(x))^{\lambda}{\log(s_i(x))}}{\sum_{i=1}^{p}w_{i}(s_i(x))^{\lambda}}\\
    & = \frac{\sum_{i=1}^{p}w_{i}{\log(s_i(x))}}{\sum_{i=1}^{p}w_{i}}
    = \sum_{i=1}^{p}{\log(s_i(x))^{w_i}}
     = \log\left(\prod_{i=1}^{p}(s_i(x))^{w_i}\right).
\end{align*}
\item [(ii)] 
$\lim_{\lambda \to 1}\phi_{\lambda w}(x)= \sum_{i=1}^{p}w_{i}s_i(x).$
\end{enumerate}
\end{note}

For the simplicity of notations, throughout the rest of the paper, we simply use $\phi(x)$ instead of $\phi_{\lambda w}(x).$

Next, we show two instances of calculating the auxiliary functions $s_i^,$s, and the objective function $\phi$ of the GOWA robust counterpart. In the first instance, we consider smooth ${f(x,\zeta_i)}^,$s, and in the second instance, we take nonsmooth $f(x,\zeta_i)^,$s for all $\zeta_i^,$s.

\begin{example}\label{GOWAexample_1}
For a single objective uncertain optimization problem $\mathcal{P}(\Omega),$ let the uncertainty set be $\Omega=\{\zeta_1, \zeta_2, \zeta_3, \zeta_4\}$ and the feasible set be $\mathcal{X}=[-2,2]$.
We define $f:\mathcal{X}\times\Omega \to \mathbb{R}$
by $f(x,\zeta_1)=\sin{x}+1, f(x,\zeta_2)=\tfrac{e^x}{3}, f(x,\zeta_3)=(\cos{x}+1)^2$ and $f(x,\zeta_4)=x^2.$ Graphs of $f(x,\zeta_1), f(x,\zeta_2), f(x,\zeta_3)$ and $f(x,\zeta_4)$ are drawn in Figure \ref{GOWAexample_a}. Notice that
\allowdisplaybreaks
\begin{align*}
    s_1(x)
    & = \begin{dcases}
        x^2, & ~\text{if}~~ x \in [-2,-1.283] \cup [1.409,2] \\
        (\cos{x}+1)^2, & ~\text{if}~~ x \in  [-1.283,1.174]\\
        \sin{x}+1, & ~\text{if}~~ x \in  [1.174,1.409],
    \end{dcases} \\
    s_2(x) &= \begin{dcases}
        x^2, & ~\text{if}~~ x \in [-2,-1.283]\cup [1.174,1.283] \\
        (\cos{x}+1)^2, & ~\text{if}~~ x \in  [-1.283,-0.636]\cup [1.283,1.409]\\
        \sin{x}+1, & ~\text{if}~~ x \in  [-0.636,1.174]\cup[1.409,1.781]\\
        \tfrac{e^x}{3}, & ~\text{if}~~ x \in  [1.781,2],
    \end{dcases}\\
        s_3(x)
    & = \begin{dcases}
        x^2, & ~\text{if}~~ x \in [-0.636,-0.459]\cup[0.910,1.283] \\
        (\cos{x}+1)^2, & ~\text{if}~~ x \in  [1.283,1.405]\\
        \sin{x}+1, & ~\text{if}~~ x \in  [-2,-1.889]\cup[-1.093,-0.636]\cup[1.781,2]\\
        \tfrac{e^x}{3}, & ~\text{if}~~ x \in  [-1.889,-1.093]\cup[-0.459,0.910]\cup [1.405,1.781],~~~\text{and}~~~
    \end{dcases} \\
     s_4(x)&= \begin{dcases}
        x^2, & ~\text{if}~~ x \in [-0.459,0.910] \\
        (\cos{x}+1)^2, & ~\text{if}~~ x \in  [1.405,2]\\
        \sin{x}+1, & ~\text{if}~~ x \in  [-1.889,-1.093]\\
        \tfrac{e^x}{3}, & ~\text{if}~~ x \in  [-2,-1.889]\cup[-1.093,-0.459]\cup[0.910,1.405].
    \end{dcases} 
\end{align*}
Graphs of $s_1(x),s_2(x),s_3(x)$ and $s_4(x)$ are drawn in Figure \ref{GOWAexample_b}.
Let us take $w_1=w_2=w_3=\tfrac{1}{3},w_4=0$  and $ \lambda = 1.$ Then, the function $\phi(x),$ defined by \eqref{phi}, becomes
 \begin{equation*}
\phi(x) =\left\{
        \begin{array}{ll}
            \frac{1}{3}\left(x^2+(\cos{x}+1)^2+\sin{x}+1\right), & \quad ~\text{if}~~ x \in [-2,-1.889]\cup [-1.093,-0.459]\cup[0.910,1.405] \\
            \frac{1}{3}\left(x^2+(\cos{x}+1)^2+\tfrac{e^x}{3}\right), & \quad ~\text{if}~~ x \in [-1.889,-1.093]\\
            \frac{1}{3}\left((\cos{x}+1)^2+(\sin{x}+1)+\tfrac{e^x}{3}\right), & \quad ~\text{if}~~ x \in [-0.459,0.910]\\
            \frac{1}{3}\left(x^2+\tfrac{e^x}{3}\sin{x}+1\right), & \quad ~\text{if}~~ x \in [1.405,2],\\
        \end{array}
    \right.
\end{equation*}
whose graph is shown in Figure \ref{GOWAexample1}. 
The GOWA robust counterpart for this example is $\underset{x \in [-2,2]}{\min}\phi(x).$ \\ The optimal solution set of this counterpart is $\{-1.279\}$, which is the GOWA robust optimal solution of the uncertain problem $\mathcal{P}(\Omega)$ under consideration.
\begin{figure}[H]
    \centering
    \subfloat[Objective function for different scenario\label{GOWAexample_a}]{{\includegraphics[scale=0.45]{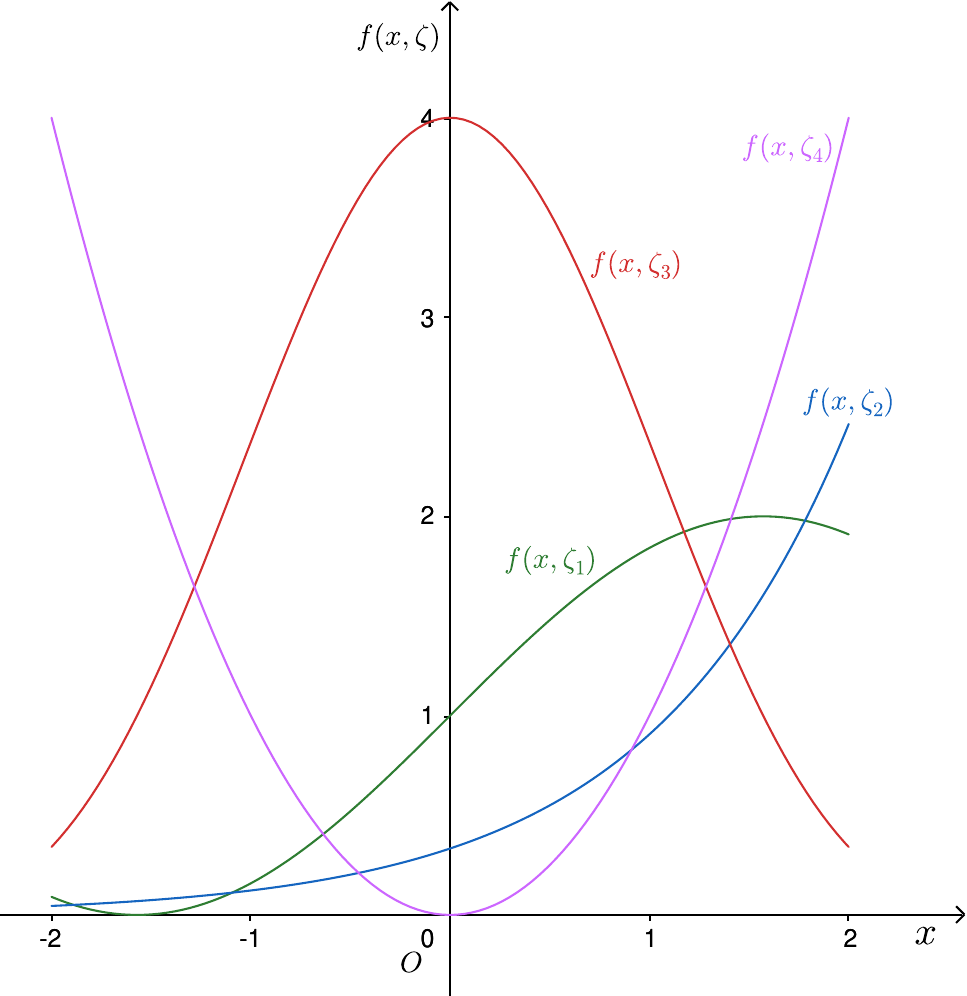} }}
    \qquad
    \subfloat[Graphs of $s_1,s_2,s_3$ and $s_4$\label{GOWAexample_b}]{{\includegraphics[scale=0.5]{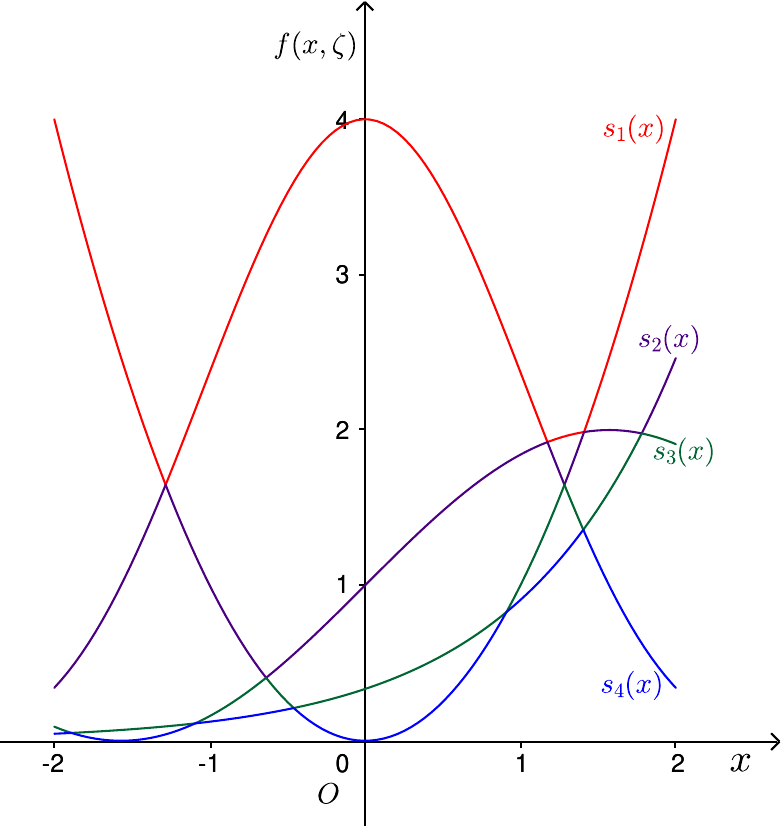} }}
    \caption{Functions involved in GOWA robust counterpart for Example \ref{GOWAexample_1}}
\end{figure}
\begin{figure}[H]
    \centering
    \includegraphics[scale=0.5]{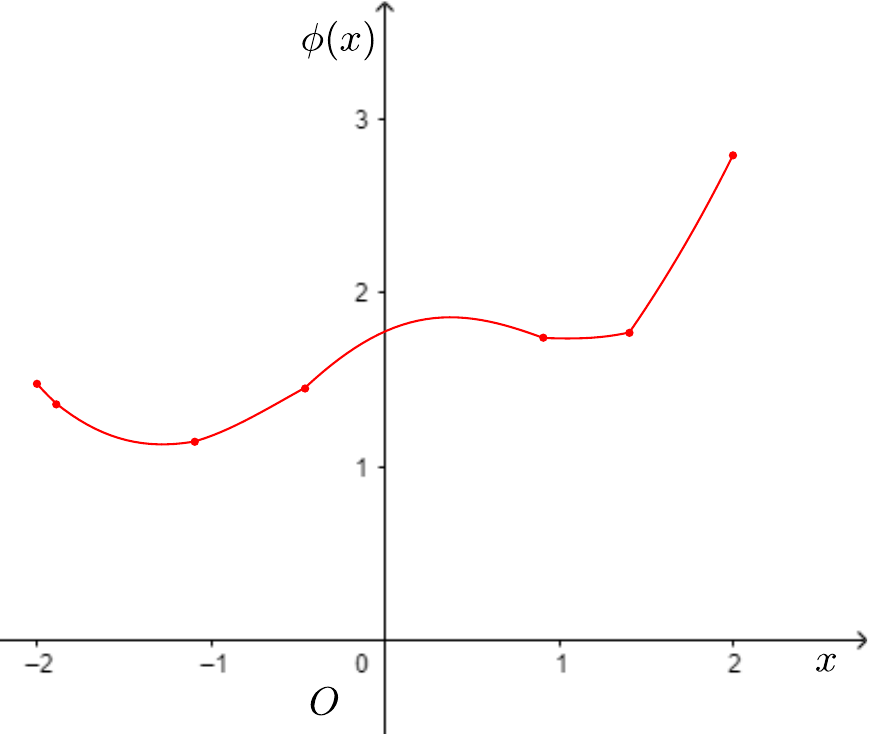}
    \caption{The function $\phi(x)$ for Example \ref{GOWAexample_1}}
    \label{GOWAexample1}
\end{figure}
\end{example}

\begin{example}\label{GOWAbestexample}
In this example, we consider continuous $f(x,\zeta_i)^,s,$, and the function $\phi$ has infinitely many nonsmooth points in any neighbourhood of a feasible point. Let the uncertainty set be $\Omega=\{\zeta_1,\zeta_2,\zeta_3\}$ and the feasible set be $\mathcal{X}=\mathbb{R}$.
Suppose $f(x,\zeta_1)=|x\sin({\tfrac{1}{x}})|,~ f(x,\zeta_2)=|x \cos({\tfrac{1}{x}})|,$ and $
f(x,\zeta_3)=|x|,~~\text{for all}~~ x \in \mathcal{X}.$ Graphs of $f(x,\zeta_1),~f(x,\zeta_2)$ and $f(x,\zeta_3)$ are drawn in Figure \ref{GOWAbestfunction}.
For $x \geq 0,$ 
\allowdisplaybreaks
\begin{align*}
    s_1(x)
    & = \begin{dcases}
        \left|x\sin{(\tfrac{1}{x})}\right|, & ~\text{if}~~ x \in \bigcup\limits_{n=1}^{\infty} \left[\tfrac{1}{(4n-1)}\tfrac{4}{\pi},\tfrac{1}{(4n-3)}\tfrac{4}{\pi} \right]\bigcup \bigcup\limits_{n=1}^{\infty} \left\{\tfrac{1}{(2n-1)}\tfrac{4}{\pi}\right\} \\
        \left|x\cos{(\tfrac{1}{x})}\right|, & ~\text{if}~~ x \in  \bigcup\limits_{n=1}^{\infty} \left[\tfrac{1}{(8n+1)}\tfrac{4}{\pi},\tfrac{1}{(8n-1)}\tfrac{4}{\pi} \right]\bigcup \bigcup\limits_{n=1}^{\infty} \left\{\tfrac{1}{(2n-1)}\tfrac{4}{\pi}\right\} \bigcup \left[\tfrac{4}{\pi}, \infty\right)\\
        \left|\tfrac{x}{\sqrt{2}}\right|, & ~\text{if}~~ x \in  \bigcup\limits_{n=1}^{\infty} \left[\tfrac{1}{(8n-4)}\tfrac{4}{\pi},\tfrac{1}{(8n-7)}\tfrac{4}{\pi} \right]\bigcup \bigcup\limits_{n=1}^{\infty} \left\{\tfrac{1}{(8n-1)}\tfrac{4}{\pi}\right\},
    \end{dcases} \\
    s_2(x)& = \begin{dcases}
        \left|x\sin{(\tfrac{1}{x})}\right|, & ~\text{if}~~ x \in \bigcup\limits_{n=1}^{\infty} \left[\tfrac{1}{(8n-6)}\tfrac{4}{\pi},\tfrac{1}{(8n-7)}\tfrac{4}{\pi} \right]\bigcup \bigcup\limits_{n=1}^{\infty} \left[\tfrac{1}{(8n-4)}\tfrac{4}{\pi},\tfrac{1}{(8n-5)}\tfrac{4}{\pi} \right] \\
        &~~~~~~~~~~~~~~~~~~~~~~~~~~\bigcup \bigcup\limits_{n=1}^{\infty} \left\{\tfrac{1}{(8n-1)}\tfrac{4}{\pi}\right\} \bigcup \bigcup\limits_{n=1}^{\infty} \left\{\tfrac{1}{(2n-1)}\tfrac{4}{\pi}\right\} \\
        \left|x\cos{(\tfrac{1}{x})}\right|, & ~\text{if}~~ x \in  \bigcup\limits_{n=1}^{\infty}\left[\tfrac{1}{(8n-5)}\tfrac{4}{\pi},\tfrac{1}{(8n-6)}\tfrac{4}{\pi} \right]\bigcup \bigcup\limits_{n=1}^{\infty} \left\{\tfrac{1}{(2n-1)}\tfrac{4}{\pi}\right\}\\
        \left|\tfrac{x}{\sqrt{2}}\right|, & ~\text{if}~~ x \in  \bigcup\limits_{n=1}^{\infty} \left[\tfrac{1}{(4n-1)}\tfrac{4}{\pi},\tfrac{1}{(4n-3)}\tfrac{4}{\pi} \right]\bigcup\bigcup\limits_{n=1}^{\infty} \left[\tfrac{1}{(8n+1)}\tfrac{4}{\pi},\tfrac{1}{(8n-1)}\tfrac{4}{\pi} \right]\\
        &~~~~~\bigcup \bigcup\limits_{n=1}^{\infty} \left[\tfrac{1}{(8n-4)}\tfrac{4}{\pi},\tfrac{1}{(8n-5)}\tfrac{4}{\pi} \right] \bigcup \bigcup\limits_{n=1}^{\infty} \left\{\tfrac{1}{(8n-3)}\tfrac{4}{\pi}\right\} \bigcup \left[\tfrac{4}{\pi}, \infty\right),~~~\text{and}\\
    \end{dcases}\\
        s_3(x)
    & = \begin{dcases}
        \left|x\sin{(\tfrac{1}{x})}\right|, & ~\text{if}~~ x \in \bigcup\limits_{n=1}^{\infty} \left[\tfrac{1}{(8n+1)}\tfrac{4}{\pi},\tfrac{1}{(8n-1)}\tfrac{4}{\pi} \right]\bigcup \bigcup\limits_{n=1}^{\infty} \left[\tfrac{1}{(8n-5)}\tfrac{4}{\pi},\tfrac{1}{(8n-6)}\tfrac{4}{\pi} \right]\\
        &~~~~~~~~~~~~~~~~~~~\bigcup \bigcup\limits_{n=1}^{\infty} \left\{\tfrac{1}{(8n-3)}\tfrac{4}{\pi}\right\}\bigcup\limits_{n=1}^{\infty} \left\{\tfrac{1}{(8n-1)}\tfrac{4}{\pi}\right\} \bigcup \left[\tfrac{4}{\pi}, \infty\right)\\
        \left|x\cos{(\tfrac{1}{x})}\right|, & ~\text{if}~~ x \in  \bigcup\limits_{n=1}^{\infty} \left[\tfrac{1}{(8n-4)}\tfrac{4}{\pi},\tfrac{1}{(8n-5)}\tfrac{4}{\pi} \right]\bigcup \bigcup\limits_{n=1}^{\infty} \left[\tfrac{1}{(8n-6)}\tfrac{4}{\pi},\tfrac{1}{(8n-7)}\tfrac{4}{\pi} \right]\\
        &~~~~~~~~~~~~~~\bigcup \bigcup\limits_{n=1}^{\infty} \left[\tfrac{1}{(4n-1)}\tfrac{4}{\pi},\tfrac{1}{(4n-3)}\tfrac{4}{\pi} \right] \bigcup \bigcup\limits_{n=1}^{\infty} \left\{\tfrac{1}{(8n-1)}\tfrac{4}{\pi}\right\}\\
        \left|\tfrac{x}{\sqrt{2}}\right|, & ~\text{if}~~ x \in  \bigcup\limits_{n=1}^{\infty} \left\{\tfrac{1}{(2n-1)}\tfrac{4}{\pi}\right\}.
    \end{dcases} 
\end{align*}
For $x <0,~ s_i(x)=s_i(-x),~ i=1,2,3.$
\\
Let us take $w_1=w_2=w_3=\tfrac{1}{3} $  and $ \lambda = 1.$ Then, the function $\phi(x),$ defined by \eqref{phi}, becomes
 \begin{equation*}
\phi(x) =\frac{1}{3}\left(\left|\tfrac{x}{\sqrt{2}}\right|+\left|x \sin{(\tfrac{1}{x})}\right|+\left|x \cos{(\tfrac{1}{x})}\right|\right), ~~ x\in \mathcal{X},
\end{equation*}
which is shown in Figure \ref{GOWAbestphi}. As the auxiliary functions $s_1, s_2$ and $s_3$ are continuous, so the objective function $\phi$ is continuous at $0$.\\
The GOWA robust counterpart for this example is \[\underset{x \in \mathbb{R}}{\min}~~\frac{1}{3}\left(\left|\tfrac{x}{\sqrt{2}}\right|+\left|x \sin{(\tfrac{1}{x})}\right|+\left|x \cos{(\tfrac{1}{x})}\right|\right).\] The optimal solution set of this counterpart is $\{0\}$, which is the GOWA robust optimal solution of the uncertain problem $\mathcal{P}(\Omega)$ under consideration.

\begin{figure}[H]
    \centering
    \includegraphics[scale=0.35]{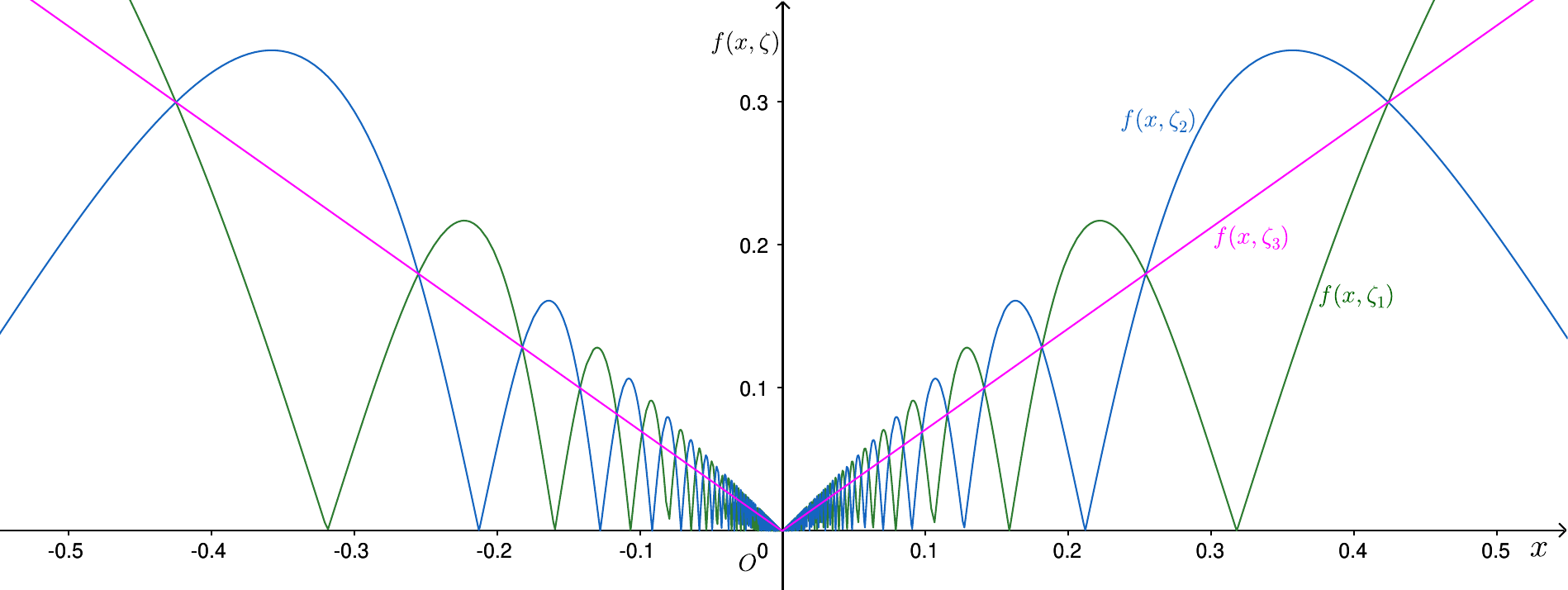}
    \caption{Objective function for different scenarios}
    \label{GOWAbestfunction}
\end{figure}
\begin{figure}[H]
    \centering
    \includegraphics[scale=0.35]{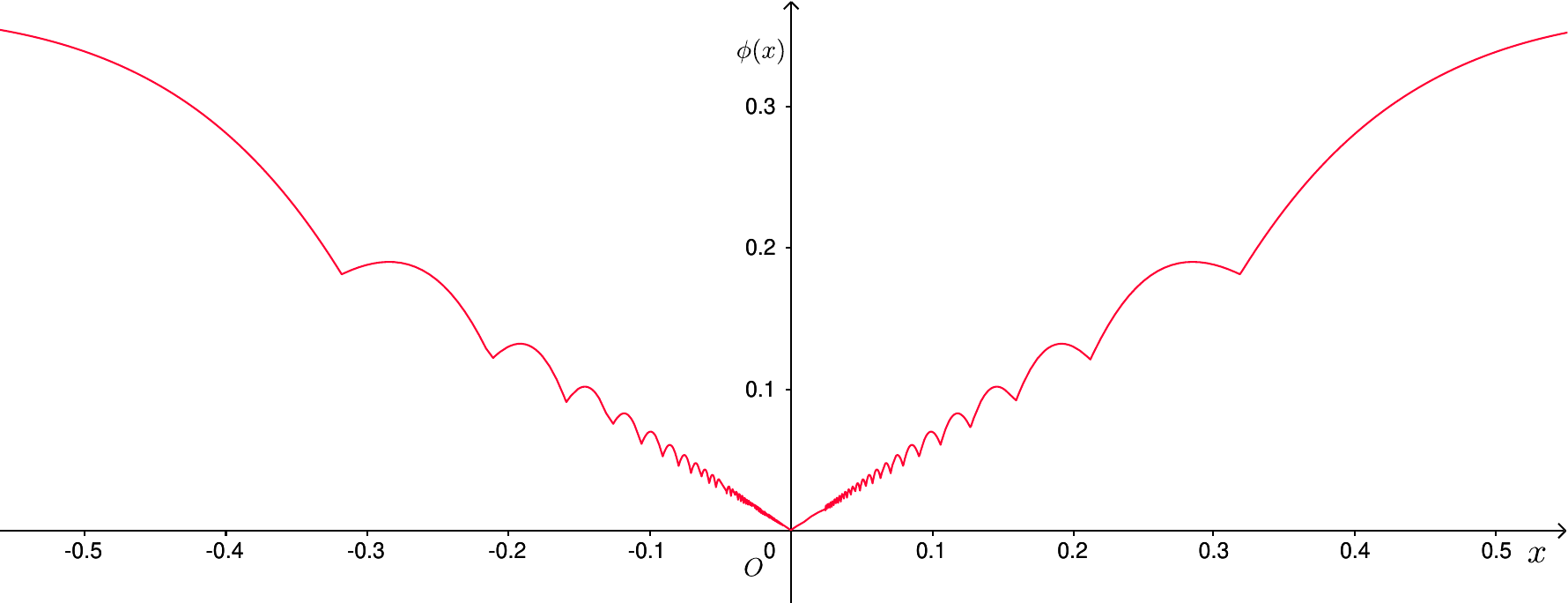}
    \caption{The function $\phi(x)$ for Example \ref{GOWAbestexample}}
    \label{GOWAbestphi}
\end{figure}
\end{example}

\noindent
We next investigate the continuity, Lipschitz continuity and coerciveness of the function $\phi(x)$ defined by \eqref{phi}. The analysis of continuity and coerciveness are helpful for the existence of an optimal solution to (\ref{GOWA}).

\noindent
\begin{theorem}\label{continuity}
\emph{{(Continuity of the function $\phi$).}}
For a given uncertain single objective optimization problem $\mathcal{P}(\Omega),$ suppose that each $f(x,\zeta_i)~i=1,2,\ldots,p,~$ is continuous on $\mathcal{X}.$ Then, the function $\phi(x)$ defined by \eqref{phi}, is continuous on $\mathcal{X}.$
\end{theorem}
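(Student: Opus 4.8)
The plan is to reduce continuity of $\phi$ to continuity of the auxiliary functions $s_1, \ldots, s_p$, and then to establish the latter by recognizing each $s_i(x)$ as an order statistic of the finite family $\{f(x,\zeta_1), \ldots, f(x,\zeta_p)\}$. The first thing I would stress is that, despite the index-based tie-breaking rule \eqref{lowest_index}, the \emph{value} $s_i(x)$ is unambiguous: by construction $s_1(x) \ge s_2(x) \ge \cdots \ge s_p(x)$ are precisely the numbers $f(x,\zeta_1), \ldots, f(x,\zeta_p)$ rearranged in nonincreasing order, counted with multiplicity. The convention \eqref{lowest_index} only selects which scenario index is associated with a repeated value; it does not alter the sorted list of values. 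Hence, for every $x \in \mathcal{X}$, the number $s_i(x)$ equals the $i$-th largest element of the multiset $\{f(x,\zeta_1), \ldots, f(x,\zeta_p)\}$.

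The key step is to give a closed-form, manifestly continuous representation of this order statistic. I would use the min--max (equivalently max--min) formula over index subsets:
\[
s_i(x) = \min_{\substack{S \subseteq \{1,\ldots,p\} \\ |S| = p-i+1}} \ \max_{j \in S} f(x,\zeta_j) = \max_{\substack{T \subseteq \{1,\ldots,p\} \\ |T|=i}} \ \min_{j \in T} f(x,\zeta_j).
\]
I would verify this identity by sorting the values and exhibiting the extremal subsets: for the second expression the optimal $T$ consists of the indices of the top $i$ values, so $\min_{j \in T}$ recovers the $i$-th largest, while any other size-$i$ subset yields a smaller inner minimum; the check for the first expression is symmetric. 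As special cases, $s_1(x)$ reduces to the global maximum and $s_p(x)$ to the global minimum, consistent with \eqref{s_1}. Since each $f(x,\zeta_j)$ is continuous on $\mathcal{X}$ and the pointwise maximum and minimum of finitely many continuous functions is continuous, every nested expression $\min_S \max_{j \in S} f(x,\zeta_j)$ is continuous; therefore each $s_i$ is continuous on $\mathcal{X}$.

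Finally, I would assemble $\phi$ from the $s_i$. By assumption (iii) each $f(x,\zeta_i) \ge 0$ on $\mathcal{X}$, so $s_i(x) \ge 0$, and since $\lambda > 0$ the map $t \mapsto t^{\lambda}$ is continuous on $[0,\infty)$; hence $x \mapsto (s_i(x))^{\lambda}$ is continuous. The finite sum $\sum_{i=1}^{p} w_i (s_i(x))^{\lambda}$ is then continuous and nonnegative (using $w_i \ge 0$), and composing with the continuous map $t \mapsto t^{1/\lambda}$ on $[0,\infty)$ shows that $\phi(x) = \big(\sum_{i=1}^{p} w_i (s_i(x))^{\lambda}\big)^{1/\lambda}$ is continuous on $\mathcal{X}$.

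I expect the main obstacle to lie in the first two steps rather than the last. One must argue carefully that the combinatorially defined $s_i$ coincides with a genuine order statistic, so that the tie-breaking in \eqref{lowest_index} is irrelevant to the value, and then justify the min--max identity at every point of $\mathcal{X}$, including points where several of the $f(x,\zeta_j)$ coincide. Once continuity of the $s_i$ is in hand, the concluding composition argument for $\phi$ is routine.
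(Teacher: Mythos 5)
Your proof is correct, and it takes a genuinely different route from the paper's. You identify $s_i(x)$ with the $i$-th largest element (counted with multiplicity) of the multiset $\{f(x,\zeta_1),\ldots,f(x,\zeta_p)\}$ --- correctly observing that the tie-breaking rule \eqref{lowest_index} affects only which scenario index is attached to a value, not the sorted list of values --- and then invoke the subset representation $s_i(x)=\min_{|S|=p-i+1}\max_{j\in S}f(x,\zeta_j)=\max_{|T|=i}\min_{j\in T}f(x,\zeta_j)$, so that continuity of each $s_i$ follows from closure of continuous functions under finite maxima and minima; your final composition step is also more careful than the paper's, as it explicitly uses $f\geq 0$, $w_i\geq 0$, $\lambda>0$ to keep the powers $t^{\lambda}$, $t^{1/\lambda}$ on $[0,\infty)$. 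The paper instead proves continuity of each $s_i$, $i\geq 2$, by a direct $\epsilon$--$\delta$ argument: with $\delta$ chosen uniformly over scenarios, it writes $s_i(x')=f(x',\zeta_l)$, $s_i(\bar x)=f(\bar x,\zeta_k)$ and splits into two cases, namely that the two functions agree at an endpoint of the segment $L(x',\bar x)$ or at an interior point, and then bounds $|s_i(x')-s_i(\bar x)|$ via the triangle inequality through the common point. Your approach buys two real advantages: ties are absorbed painlessly into the order-statistic formulation (exactly the issue behind Example \ref{s_i_plus_one_is_not_well_defined}), and you avoid the delicate exhaustiveness question in the paper's case analysis --- when $l\neq k$, the transfer of rank $i$ from $\zeta_l$ to $\zeta_k$ can be mediated by a third scenario crossing both, in which case $f(\cdot,\zeta_l)$ and $f(\cdot,\zeta_k)$ need never agree on $L(x',\bar x)$, so the paper's dichotomy requires additional justification that your closed-form representation renders unnecessary. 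As a bonus, the same representation immediately gives local Lipschitz continuity of the $s_i$ (the content of Theorem \ref{Lipschitzcontinuity}), since finite max/min preserves the Lipschitz property with the same constant. What the paper's argument offers in exchange is that it is elementary and works directly from the recursive definition \eqref{s_1}--\eqref{s_i}, without needing the combinatorial order-statistic identity.
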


\begin{proof}
According to the formulation of the functions $s_1(x)$ and $s_i(x)$, we have 
\begin{align*}
&s_1(x)= \max~\{f(x,\zeta_1),f(x,\zeta_2),\ldots,f(x,\zeta_p)\}~~~\text{and}\\
&s_i(x)= \max~\bigg\{\{f(x,\zeta_1),f(x,\zeta_2),\ldots,f(x,\zeta_p)\}\mathbin{\Big\backslash}\{s_1(x),s_2(x),\ldots,s_{i-1}(x)\}\bigg\},~~~~i=2,3,\ldots,p.
\end{align*}
As the maximum of continuous functions is continuous, therefore $s_1$ is trivially continuous.
\noindent
Next, we show that $s_i(x),$ for any $i=2,3,\ldots,p,$ is continuous at $\bar{x} \in \mathcal{X}.$

\noindent
Let $\epsilon >0.$ As $f(x,\zeta_i)$  is continuous at $\bar{x},~\text{there exists }~\delta_i$ such that \[|f(x,\zeta_i)-f(\bar{x},\zeta_i)| < \tfrac{\epsilon}{2}~~\text{ whenever }~~\|x-\bar{x}\| < \delta_i.\]

\noindent
Taking $\delta = \min \{\delta_1,\delta_2,\ldots,\delta_p\},$ we have \[|f(x,\zeta_i)-f(\bar{x},\zeta_i)| < \tfrac{\epsilon}{2} \quad \text{whenever} \quad \|x-\bar{x}\|<\delta.\] Let $x^\prime$ be such that $\|x^\prime-\bar{x}\|<\delta.$ Suppose $s_i(x^\prime)=f(x^\prime,\zeta_l)$ and $s_i(\bar{x})=f(\bar{x},\zeta_k)$ for some $l,k \in \{1,2,\ldots,p\}.$ Consider the functions $f(x,\zeta_l)$ and $f(x,\zeta_k)$ in the segment $L(x^\prime, \bar{x})$ joining $x^\prime$ and $\bar{x}.$ There can be two possible cases:

\begin{enumerate}
    \item[(i)] $f(x^\prime, \zeta_l)=f(x^\prime,\zeta_k)$ or $f(\bar{x},\zeta_l)=f(\bar{x},\zeta_k).$ In this case, if $\|x^\prime-\bar{x}\|< \delta,$ then
    \begin{align*}
    |s_i(x^\prime)-s_i(\bar{x})|&=|f(x^\prime,\zeta_k)-f(\bar{x},\zeta_k)|
    = |f(x^\prime,\zeta_l)-f(\bar{x},\zeta_l)| < \tfrac{\epsilon}{2}.
\end{align*}
   \item[(ii)] $f(x^{\prime \prime},\zeta_l)=f(x^{\prime \prime}, \zeta_k)$ for some $x^{\prime \prime} \in \text{int}~ L(x^\prime,\bar{x}),$ i.e., corresponding to each $x^\prime \in \mathcal{X}$ such that $\|x^\prime - \bar{x}\|< \delta,$ there exists $x^{\prime \prime} \in \text{int}~ L(x^\prime , \bar{x})$ such that $f(x^{\prime \prime},\zeta_l) = f(x^{\prime \prime}, \zeta_k).$ Then, 
   \begin{align*} 
   |s_i(x') - s_i(\bar x)| ~=~ & |f(x^\prime,\zeta_l) - f(\bar{x},\zeta_k)| \\ 
   ~=~ & |f(x^\prime,\zeta_l) - f(x'',\zeta_k) + f(x'', \zeta_k) -  f(\bar{x},\zeta_k)| \\ 
   ~<~ & |f(x^\prime,\zeta_l) - f(x'',\zeta_l)| + |f(x'', \zeta_k) -  f(\bar{x},\zeta_k)| < \epsilon. 
   \end{align*} 
\end{enumerate}

\noindent
For both the cases, the function $s_i(x)$ is continuous on $\mathcal{X}$ for each $i=1,2,\ldots,p.$ Hence, the function $\phi(x)$ is continuous on $\mathcal{X}.$
\end{proof}

\begin{remark}
Since $\phi(x)$ is continuous (Theorem \ref{continuity}) and $\mathcal{X}$ is compact, by Weierstrass theorem for extreme values, the GOWA robust counterpart \eqref{GOWA} has an optimal solution if each $f(x, \zeta_i),~i=1,2,\ldots,p,$ is continuous. That is, if each $f(x,\zeta_i),~i=1,2,\ldots,p,$ is continuous, then there exists a GOWA robust optimal solution of $\mathcal{P}(\Omega).$ 
\end{remark}

The next example illustrates that if an objective function corresponding to any scenario is discontinuous, then the function $\phi(x)$ may be discontinuous.

\begin{example}\label{GOWAdiscontinuityexample}
Let the uncertainty set be $\Omega=\{\zeta_1, \zeta_2, \zeta_3\}$ and the feasible set be $\mathcal{X}=[-2,2]$.
We define $f:\mathcal{X}\times\Omega \to \mathbb{R}$ by
$f(x,\zeta_1)= \sgn{x}+1, ~f(x,\zeta_2)= \sin{x}+1 $ and $f(x,\zeta_3)= \cos{x}+1.$ Notice that $f(x,\zeta_1)$ is discontinuous at $x=0.$ Graphs of $f(x,\zeta_1),~f(x,\zeta_2)$ and $f(x,\zeta_3)$ are drawn in Figure \ref{GOWAdiscintinuous_a}. Note that 
\begin{align*}
    s_1(x)
    & = \begin{dcases}
        \cos{x}+1, & ~\text{if}~~ x \in [-2,0] \\
        1, & ~\text{if}~~ x \in  [0,2],
    \end{dcases}\\ 
    s_2(x)& = \begin{dcases}
        \sin{x}+1, & ~\text{if}~~ x \in [-2,0] \\
        \cos{x}+1, & ~\text{if}~~ x \in  [0,0.785]\\
        \sin{x}+1, & ~\text{if}~~ x \in  [0.785,2],~~~\text{and}~~~
    \end{dcases}\\
        s_3(x)
    & = \begin{dcases}
        \sgn{x}+1, & ~\text{if}~~ x \in [-2,0] \\
        \sin{x}+1, & ~\text{if}~~ x \in  [0,0.785]\\
        \cos{x}+1, & ~\text{if}~~ x \in  [0.785, 2].
      \end{dcases}
\end{align*}
Graphs of $s_1(x),s_2(x)$ and $s_3(x)$ are drawn in Figure \ref{GOWAdiscintinuous_b}.  
Let us take $w_1=w_2=\tfrac{1}{6},~w_3=\tfrac{2}{3} $  and $ \lambda = 2.$
Then,
 \begin{equation*}
\phi(x) =\left\{
        \begin{array}{ll}
            \left(\tfrac{1}{6}(\cos{x}+1)^2+\tfrac{1}{6}(\sin{x}+1)^2+\tfrac{2}{3}(\sgn{x}+1)^2\right)^{\tfrac{1}{2}}, & \quad ~\text{if}~~ x \in [-2, 0] \\
            \left(\tfrac{1}{6}+\tfrac{1}{6}(\cos{x}+1)^2+\tfrac{2}{3}(\sin{x}+1)^2\right)^{\tfrac{1}{2}}, & \quad ~\text{if}~~ x \in [0, 0.785]\\
            \left(\tfrac{1}{6}+\tfrac{1}{6}(\sin{x}+1)^2+\tfrac{2}{3}(\cos{x}+1)^2\right)^{\tfrac{1}{2}}, & \quad ~\text{if}~~ x \in [0.785, 2],
        \end{array}
    \right.
\end{equation*}
which is shown in Figure \ref{GOWAdiscontinuity}. Clearly, we see from Figure \ref{GOWAdiscontinuity} that the function $\phi(x)$ is not continuous at $0.$
\begin{figure}[H]
    \centering
    \subfloat[The objective functions for different scenarios\label{GOWAdiscintinuous_a}]{{\includegraphics[width=6cm]{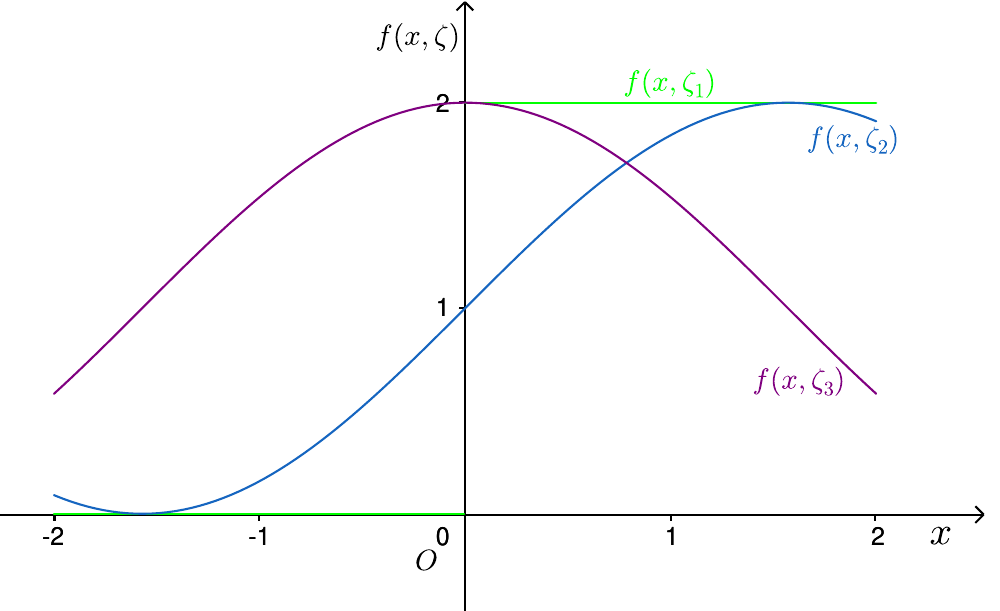} }}
    \qquad
    \subfloat[The functions $s_1, s_2, $ and $s_3$\label{GOWAdiscintinuous_b}]{{\includegraphics[width=6cm]{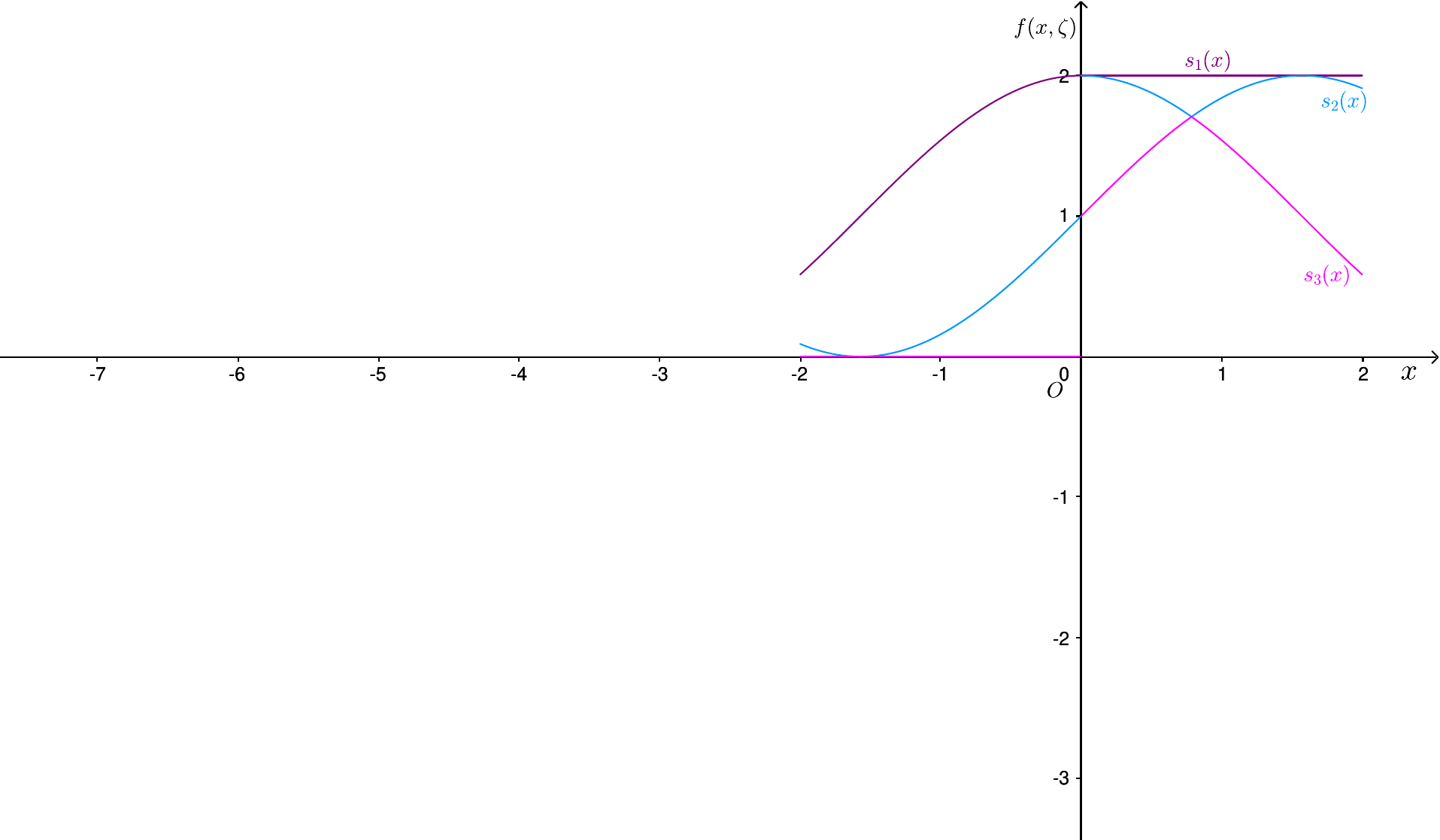} }}
    \caption{The objective function and $s_i^,$s in Example \ref{GOWAdiscontinuityexample}}
\end{figure}
\begin{figure}[H]
    \centering
    \includegraphics[scale=0.35]{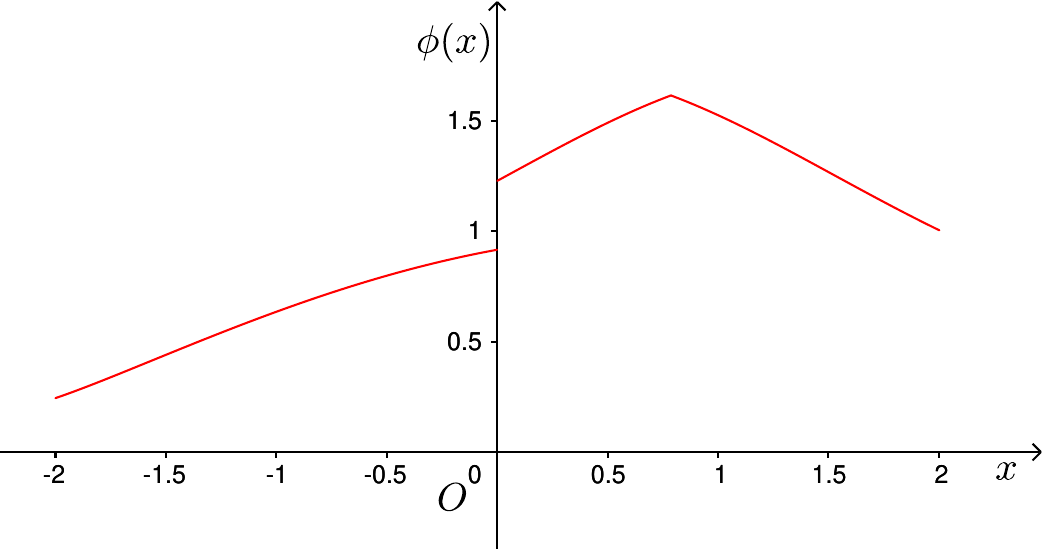}
    \caption{The function $\phi(x)$ for Example \ref{GOWAdiscontinuityexample}}
    \label{GOWAdiscontinuity}
\end{figure}
\end{example}

\begin{theorem}\label{Lipschitzcontinuity}
\emph{{(Local Lipschitz continuity of the function $\phi$).}}
For a given uncertain single objective optimization problem $\mathcal{P}(\Omega)$ if each $f(x,\zeta_i),~i=1,2,\ldots,p,~$ is locally Lipschitz continuous on $\mathcal{X},$ then the function $\phi(x)$, which is defined by \eqref{phi}, is locally Lipschitz continuous on $x \in \mathcal{X}$.
\end{theorem}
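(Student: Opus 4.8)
The plan is to exhibit $\phi$ as a composition of two locally Lipschitz maps---the ``sorting'' map $x \mapsto (s_1(x), \ldots, s_p(x))$ and the outer aggregation $g(y) = \left(\sum_{i=1}^{p} w_i y_i^{\lambda}\right)^{1/\lambda}$---and then invoke the standard fact that a composition of locally Lipschitz functions is locally Lipschitz. The first task is to pin down the $s_i$ at the level of values. Although the tie-breaking rule \eqref{lowest_index} is needed to fix \emph{which} scenario index is attached to $s_i(\bar x)$, the \emph{value} $s_i(x)$ is unambiguous: it is exactly the $i$-th largest element of the multiset $\{f(x,\zeta_1), \ldots, f(x,\zeta_p)\}$. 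Hence, writing $F(x) = (f(x,\zeta_1), \ldots, f(x,\zeta_p))$ and letting $\sigma_i : \mathbb{R}^p \to \mathbb{R}$ denote the $i$-th largest-coordinate (order statistic) map, I would record the clean identity $s_i = \sigma_i \circ F$.

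The key Lipschitz estimate for the $s_i$ then reduces to the fact that each $\sigma_i$ is globally $1$-Lipschitz with respect to $\|\cdot\|_\infty$. This follows from monotonicity of order statistics under coordinatewise inequalities: if $a \le b$ coordinatewise then $\sigma_i(a) \le \sigma_i(b)$, and applying this to $a_j \le b_j + \|a - b\|_\infty$ together with the symmetric bound yields $|\sigma_i(a) - \sigma_i(b)| \le \|a - b\|_\infty$. Since each $f(\cdot, \zeta_j)$ is locally Lipschitz, on a suitable neighborhood $U$ of any $\bar x$ there is a common constant $L$ with $\|F(x) - F(y)\|_\infty \le L\|x - y\|$ for $x, y \in U$; composing gives $|s_i(x) - s_i(y)| \le L\|x - y\|$, so every $s_i$ is locally Lipschitz. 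I prefer this order-statistic viewpoint to a segment-based case analysis like the one used in the proof of Theorem \ref{continuity}, as it avoids tracking which $\zeta_\ell$ realizes $s_i$ and handles ties automatically.

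Finally, with $S = (s_1, \ldots, s_p)$ locally Lipschitz and valued (on the compact closure of $U$) in a bounded subset of the nonnegative orthant, it remains to show $g$ is Lipschitz on that set; the composition $\phi = g \circ S$ would then be locally Lipschitz by the chain estimate $|\phi(x) - \phi(y)| \le L_g \, \|S(x) - S(y)\|$. When $\lambda \ge 1$, this is immediate: $g$ is a weighted $\ell^\lambda$-type sublinear function, hence globally Lipschitz, so no boundary difficulty arises and the proof closes.

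I expect the genuine obstacle to lie in the regime $0 < \lambda < 1$, precisely at points where some $s_i(\bar x) = 0$. There $t \mapsto t^\lambda$ has unbounded derivative at the origin, and $\partial g / \partial y_i$ behaves like $w_i y_i^{\lambda - 1}(\cdots)$, which blows up as $y_i \downarrow 0$ while the remaining coordinates stay positive; so $g$ fails to be Lipschitz across the coordinate hyperplanes $\{y_i = 0\}$ and the naive composition argument breaks down. The clean way I would secure the conclusion is to keep $S$ bounded away from those hyperplanes---for instance under the strict positivity assumption $f(x,\zeta) > 0$ of Note \ref{lambdazero}, which forces $s_i(x) \ge \min_{1 \le j \le p}\, \inf_{u \in U} f(u,\zeta_j) > 0$ on a compact neighborhood and makes $g$ smooth there---or, failing that, to restrict to $\lambda \ge 1$. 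Verifying that the outer aggregation remains locally Lipschitz at vanishing $s_i$ for $0 < \lambda < 1$ is thus the delicate point on which the full generality of the statement hinges.
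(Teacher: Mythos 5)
Your argument, where it is complete, is correct, and it differs from the paper's proof at both of its stages. For the inner stage (local Lipschitz continuity of the $s_i$), the paper runs the same two-case segment analysis it used for Theorem \ref{continuity}: writing $s_i(x')=f(x',\zeta_l)$ and $s_i(\bar x)=f(\bar x,\zeta_k)$, it splits according to whether the two scenario functions agree at an endpoint of the segment $L(x',\bar x)$ or cross in its interior, and obtains $|s_i(x')-s_i(\bar x)|\le 2\Gamma\|x'-\bar x\|$ with $\Gamma=\max_i\Gamma_i$. Your identity $s_i=\sigma_i\circ F$, combined with the fact that each order-statistic map satisfies $|\sigma_i(a)-\sigma_i(b)|\le\|a-b\|_\infty$, reaches the same conclusion with less bookkeeping: it never needs the tie-breaking rule \eqref{lowest_index} (only the values of the $s_i$ enter $\phi$), and it is immune to a defect of the paper's dichotomy, which as written is not exhaustive --- with the paper's choice of $\delta$ (determined only by the Lipschitz radii), the index realizing $s_i$ can change between $x'$ and $\bar x$ through crossings with \emph{third} scenario functions while $f(\cdot,\zeta_l)$ and $f(\cdot,\zeta_k)$ never meet on the segment (e.g.\ $p=3$, $f(x,\zeta_1)\equiv 2$, $f(x,\zeta_2)=1+4x$, $f(x,\zeta_3)=3+x$ on $[0,1]$, where $s_2$ passes from $f(\cdot,\zeta_1)$ to $f(\cdot,\zeta_2)$ to $f(\cdot,\zeta_3)$). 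So on this stage your route is not merely different but more robust.

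The decisive difference is the outer stage, and there your suspicion is exactly right: the paper has no outer stage at all. Its proof stops once the $s_i$ are shown locally Lipschitz and concludes directly that $\phi$ is, never examining the aggregation $g(y)=\bigl(\sum_{i=1}^p w_i y_i^{\lambda}\bigr)^{1/\lambda}$. For $\lambda\ge 1$ the omission is harmless, as you argue ($g$ is convex and positively homogeneous on the nonnegative orthant, hence globally Lipschitz). But for $0<\lambda<1$ the obstruction you isolate is fatal to the statement as written, not merely delicate. Take $p=2$, $\mathcal X=[-1,1]$, $f(x,\zeta_1)\equiv 1$, $f(x,\zeta_2)=|x|$ (both nonnegative and globally Lipschitz, so all standing assumptions hold), $w_1=w_2=\tfrac12$, $\lambda=\tfrac12$. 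Near $0$ one has $s_1\equiv 1$ and $s_2(x)=|x|$, so
\[
\phi(x)=\Bigl(\tfrac12+\tfrac12\sqrt{|x|}\Bigr)^{2}=\tfrac14+\tfrac12\sqrt{|x|}+\tfrac14|x|,
\]
whose difference quotient at $0$ grows like $\tfrac12|x|^{-1/2}$; hence $\phi$ is not locally Lipschitz at $0$. The theorem is therefore true only under an additional hypothesis --- $\lambda\ge 1$, or strict positivity $f(x,\zeta)>0$ as in Note \ref{lambdazero}, which on a compact neighbourhood bounds the $s_i$ away from the hyperplanes $\{y_i=0\}$ and makes $g$ smooth there --- and these are precisely the two repairs you propose. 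In short: your proof is complete and correct in the cases where the statement holds, and the ``delicate point'' you flag is a genuine gap in the paper's own proof, indeed a counterexample to its unrestricted claim.
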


\begin{proof}
According to the formulation of the functions $s_1(x)$ and $s_i(x),$ we have 
\begin{align*}
&s_1(x)= \max~\{f(x,\zeta_1),f(x,\zeta_2),\ldots,f(x,\zeta_p)\}~~~\text{and}\\
&s_i(x)= \max~\bigg\{\{f(x,\zeta_1),f(x,\zeta_2),\ldots,f(x,\zeta_p)\}\mathbin{\Big\backslash}\{s_1(x),s_2(x),\ldots,s_{i-1}(x)\}\bigg\},~~~~i=2,3,\ldots,p.
\end{align*}

\noindent
As the maximum of locally Lipschitz continuous functions is locally Lipschitz continuous, $s_1$ is trivially locally Lipschitz continuous.

\noindent
Next, we show that the functions $s_i(x),$ for any $i=2,3,\ldots,p,$ is locally Lipschitz continuous at $\bar{x} \in \mathcal{X}.$
As $f(x,\zeta_i)$  is locally Lipschitz continuous at $\bar{x},~\text{there exists} ~\Gamma_i >0$ and $\delta_i >0$ such that \[|f(x,\zeta_i)-f(\bar{x},\zeta_i)| \leq \Gamma_i \|x-\bar{x}\|~~\text{whenever}~~\|x-\bar{x}\| < \delta_i.\]
Taking $\delta = \min~ \{\delta_1,\delta_2,\ldots,\delta_p\}$ and $\Gamma = \max ~\{\Gamma_1, \Gamma_2, \ldots, \Gamma_p\},$ we have \[|f(x,\zeta_i)-f(\bar{x},\zeta_i)| \leq \Gamma \|x-\bar{x}\|~~\text{whenever}~~ \|x-\bar{x}\|<\delta ~\text{ for all }~ i=1,2,\ldots,p.\] Let $x^\prime$ be such that $\|x^\prime-\bar{x}\|<\delta.$ Suppose $s_i(x^\prime)=f(x^\prime,\zeta_l)$ and $s_i(\bar{x})=f(\bar{x},\zeta_k)$ for some $l,k \in \{1,2,\ldots,p\}.$ Consider the curves $f(x,\zeta_l)$ and $f(x,\zeta_k)$ in the line segment $L(x^\prime, \bar{x})$ joining $x^{\prime}$ and $\bar{x}.$ There can be two possible cases:
\begin{enumerate}
    \item[(i)] \label{case1} $f(x^\prime, \zeta_l)=f(x^\prime,\zeta_k)$ or $f(\bar{x},\zeta_l)=f(\bar{x},\zeta_k).$ In this case, if $\|x^\prime-\bar{x}\|< \delta,$ then
\begin{align*}
    |s_i(x^\prime)-s_i(\bar{x})|&=|f(x^\prime,\zeta_l)-f(\bar{x},\zeta_k)|\\
    &= |f(x^\prime,\zeta_k)-f(\bar{x},\zeta_k)|
    = |f(x^\prime,\zeta_l)-f(\bar{x},\zeta_l)| \leq \Gamma \|x^\prime-\bar{x}\|
\end{align*}

     \item[(ii)] $f(x^{\prime \prime},\zeta_l)=f(x^{\prime \prime}, \zeta_k)$ for some $x^{\prime \prime} \in \text{int}~ L(x^\prime,\bar{x}),$ i.e., corresponding to each $x^\prime \in \mathcal{X}$ such that $\|x^\prime - \bar{x}\|< \delta,$ there exist $x^{\prime \prime} \in \text{int}~ L(x^\prime , \bar{x})$ such that $f(x^{\prime \prime},\zeta_l) = f(x^{\prime \prime}, \zeta_k).$ Then, 
   \begin{align*} 
   |s_i(x') - s_i(\bar x)| ~=~ & |f(x^\prime,\zeta_l) - f(\bar{x},\zeta_k)| \\ 
   ~=~ & |f(x^\prime,\zeta_l) - f(x'',\zeta_k) + f(x'', \zeta_k) -  f(\bar{x},\zeta_k)| \\ 
   ~<~ & |f(x^\prime,\zeta_l) - f(x'',\zeta_l)| + |f(x'', \zeta_k) -  f(\bar{x},\zeta_k)| \leq 2 \Gamma \|x^\prime-\bar{x}\|. 
   \end{align*} 
\end{enumerate}
For both the cases, the function $s_i(x)$ is locally Lipschtiz continuous on $\mathcal{X}$ for each $i=1,2,\ldots,p.$ Hence, the function $\phi(x)$ is locally Lipschitz continuous on $\mathcal{X}.$
\end{proof}

\begin{theorem}\label{coeirciveness}
\emph{(Coerciveness of the function  $\phi$).}
For a given uncertain single objective optimization problem $\mathcal{P}(\Omega),$ suppose each $f(x,\zeta_i),~ i=1,2,\ldots,p,$ is a coercive function on $\mathbb{R}^n.$ Then, the function $\phi(x)$, defined by \eqref{phi}, is a coercive function on $\mathbb{R}^n$.
\end{theorem}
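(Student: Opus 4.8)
The plan is to reduce the coerciveness of $\phi$ to that of the smallest auxiliary function $s_p$, and then to bound $\phi$ from below by $s_p$. Recall that a function $g$ is coercive on $\mathbb{R}^n$ when $g(x) \to +\infty$ as $\|x\| \to \infty$. A tempting first move is to use $s_1(x) = \max_i f(x,\zeta_i) \geq f(x,\zeta_1)$, which is immediately coercive; however, the weight $w_1$ may be zero, so the term $w_1 (s_1(x))^\lambda$ need not contribute to $\phi$. Since every weight except possibly one could vanish, the safe lower bound must come from the quantity that is present in \emph{all} the summands, namely the smallest ordered value $s_p$. Thus the first step I would carry out is to establish that $s_p$ itself is coercive.

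First I would observe, directly from \eqref{s_1}--\eqref{s_i}, that $s_p(x) = \min\{f(x,\zeta_1),\ldots,f(x,\zeta_p)\}$, being the last value remaining after successively removing the largest ones. To see that $s_p$ is coercive, fix $M > 0$. Since each $f(\cdot,\zeta_i)$ is coercive, for every $i$ there is an $R_i > 0$ with $f(x,\zeta_i) > M$ whenever $\|x\| > R_i$. Setting $R = \max\{R_1,\ldots,R_p\}$, all of the values $f(x,\zeta_i)$ exceed $M$ simultaneously once $\|x\| > R$, whence $s_p(x) = \min_i f(x,\zeta_i) > M$. As $M$ was arbitrary, $s_p(x) \to +\infty$ as $\|x\| \to \infty$, so $s_p$ is coercive. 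By the ordering $s_1(x) \geq \cdots \geq s_p(x)$ noted after \eqref{lowest_index}, together with the nonnegativity in assumption (iii), we have $s_i(x) \geq s_p(x) \geq 0$ for every $i$.

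Finally I would use these inequalities to bound $\phi$ from below. Because $\lambda > 0$ and $s_i(x) \geq s_p(x) \geq 0$, we have $(s_i(x))^\lambda \geq (s_p(x))^\lambda$ for each $i$; multiplying by $w_i \geq 0$, summing over $i$, and invoking $\sum_{i=1}^p w_i = 1$ gives $\sum_{i=1}^p w_i (s_i(x))^\lambda \geq (s_p(x))^\lambda$. Since $t \mapsto t^{1/\lambda}$ is increasing on $[0,\infty)$, raising both sides to the power $1/\lambda$ yields $\phi(x) \geq s_p(x)$. Coerciveness of $s_p$ then forces $\phi(x) \to +\infty$ as $\|x\| \to \infty$, so $\phi$ is coercive. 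The only genuinely delicate point, and the one I would flag as the main obstacle, is precisely the possible vanishing of the weights: it rules out the naive comparison with $s_1$ and makes the uniform-threshold argument for $s_p$ the crux, while the nonnegativity assumption (iii) is what legitimises raising $s_i$ and $s_p$ to the real power $\lambda$.
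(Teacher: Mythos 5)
Your proof is correct, and its technical core --- the uniform-threshold argument showing that for any bound there is a radius beyond which all of $f(x,\zeta_1),\ldots,f(x,\zeta_p)$ simultaneously exceed that bound --- is exactly the argument the paper uses. The difference is in the assembly. The paper applies that argument to conclude that \emph{every} $s_i$, $i=1,2,\ldots,p$, is coercive (since each $s_i(x)$ always equals one of the values $f(x,\zeta_j)$), and then asserts that coerciveness of all the $s_i$'s gives coerciveness of $\phi$; that last inference is left implicit, and it silently uses the fact that at least one $w_i$ is positive (otherwise the sum defining $\phi(x)^\lambda$ could ignore every coercive term). You instead prove coerciveness only of the smallest function $s_p = \min_i f(\cdot,\zeta_i)$ and then establish the explicit pointwise bound $\phi(x) \geq s_p(x)$ from the ordering $s_i \geq s_p \geq 0$, the monotonicity of $t \mapsto t^{\lambda}$ on $[0,\infty)$, and $\sum_{i=1}^p w_i = 1$. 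This buys a cleaner finish: the zero-weight issue you flag is genuinely present in the statement (only $w_i \geq 0$ with unit sum is assumed, and indeed the paper's own Example \ref{GOWAexample_1} takes $w_4=0$), and your inequality $\phi \geq s_p$ disposes of it explicitly, whereas the paper handles it only tacitly. Conversely, the paper's route yields the slightly stronger intermediate fact that every ordered function $s_i$ is coercive, not just $s_p$. Both arguments are of comparable length and difficulty, and both are valid.
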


\begin{proof}
Since each $f(x,\zeta_i),~ i=1,2,\ldots,p,$ is coercive, i.e., $f(x,\zeta_i)$ goes to infinity as $\norm{x}$ goes to infinity, the maximum function of $f(x,\zeta_i)$ also goes to infinity as $\norm{x}$ goes to infinity. Therefore, $s_1(x)$ is coercive. Next, we show that $\underset{~~~\|x\| \to \infty}{\lim}~s_i(x)= +\infty.$ Let $K >0$. Since $\underset{\|x\| \to \infty}{\lim}f(x,\zeta_i)= +\infty$ for all $i=1,2,\ldots,p,$ there exists $G >0$ such that for any $i \in \{1,2,\ldots,p\},$ we have
\[|f(x, \zeta_i)| > K,~\text{for all}~ x \in \mathbb{R}^n~~\text{with}~~ \|x\| > G.\]
Hence, by the definition of $s_i(x),$
\[|s_i(x)| >K,~ \text{for all}~ x \in \mathbb{R}^n ~~\text{with}~~ \|x\| >G.\] Thus, $\underset{\|x\| \to \infty}{\lim}s_i(x)= +\infty$ for all $i=1,2,\ldots,p,$ and hence, $\phi$ is a coercive function on $\mathbb{R}^n.$
\end{proof}

\begin{theorem}\emph{(Monotonicity of the function $\phi$).}\label{monotonicity_of_phi}
Let $\mathcal{X}$ be a nonempty compact subset of $\mathbb{R}.$ Consider an uncertain single objective optimization problem $\mathcal{P}(\Omega).$ 
If the functions $f(x,\zeta_i),~i=1,2,\ldots,p,$ are increasing on $\mathcal{X},$ then the function $\phi(x),$ as defined by \eqref{phi}, is increasing on $\mathcal{X}.$
\end{theorem}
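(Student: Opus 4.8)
The plan is to argue in two stages: first that each auxiliary function $s_i$ inherits monotonicity from the family $\{f(\cdot,\zeta_j)\}$, and then that the power-mean aggregation in \eqref{phi} preserves it. For the first stage, the key observation is that, for every $x$, the number $s_i(x)$ is exactly the $i$-th largest value of the multiset $\{f(x,\zeta_1),\ldots,f(x,\zeta_p)\}$ (counted with multiplicity); the tie-breaking rule \eqref{lowest_index} only fixes which index realizes this value and leaves the value itself unchanged. I would exploit this through the max-min representation of order statistics,
\[
s_i(x)=\max_{\substack{S\subseteq\{1,\ldots,p\}\\ |S|=i}}\ \min_{j\in S} f(x,\zeta_j).
\]
Taking $x_1,x_2\in\mathcal{X}$ with $x_1\le x_2$ and using that each $f(\cdot,\zeta_j)$ is increasing, we get $f(x_1,\zeta_j)\le f(x_2,\zeta_j)$ for all $j$, so $\min_{j\in S} f(x_1,\zeta_j)\le \min_{j\in S} f(x_2,\zeta_j)$ for every fixed subset $S$; maximizing over all $S$ of cardinality $i$ then yields $s_i(x_1)\le s_i(x_2)$. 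Hence each $s_i$ is increasing on $\mathcal{X}$.

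For the second stage, I would invoke assumption (iii), which ensures $s_i(x)\ge 0$ for every $x$ and $i$. Given $x_1\le x_2$, the first stage gives $0\le s_i(x_1)\le s_i(x_2)$ for each $i$. Since $\lambda>0$, the map $t\mapsto t^{\lambda}$ is increasing on $[0,\infty)$, so $s_i(x_1)^{\lambda}\le s_i(x_2)^{\lambda}$; multiplying by $w_i\ge 0$ and summing over $i$ preserves the inequality, and then applying the increasing map $t\mapsto t^{1/\lambda}$ on $[0,\infty)$ yields $\phi(x_1)\le\phi(x_2)$. This is exactly the claimed monotonicity of $\phi$.

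The whole difficulty sits in the first stage: one must show that sorting a pointwise-increasing family of functions produces increasing functions $s_i$, even though the index achieving $s_i(x)$ can change as $x$ varies and even though ties are resolved by \eqref{lowest_index}. The order-statistic viewpoint sidesteps this by reasoning only about numerical values and never committing to a particular scenario index, which is precisely what makes it insensitive to the tie-breaking convention. An equivalent and perhaps more elementary route to the same estimate is the threshold characterization $s_i(x)\ge t \iff |\{j: f(x,\zeta_j)\ge t\}|\ge i$, together with the inclusion $\{j: f(x_1,\zeta_j)\ge t\}\subseteq\{j: f(x_2,\zeta_j)\ge t\}$; taking $t=s_i(x_1)$ then forces $s_i(x_2)\ge s_i(x_1)$. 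The second stage is routine once monotonicity of the $s_i$ is in hand.
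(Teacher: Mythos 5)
Your proof is correct, but it takes a genuinely different route from the paper's. The paper argues scenario-by-scenario: it first shows $s_1$ is increasing by taking maxima, then proves monotonicity of $s_2$ by contradiction --- supposing $s_2(y_1)<s_2(y_2)$ for some $y_1>y_2$, tracking which scenario indices attain $s_1$ and $s_2$ at the two points, and deducing that some $f(\cdot,\zeta_n)$ would fail to be increasing --- and then extends to $s_3,\ldots,s_p$ only with the remark ``in a similar way.'' Your order-statistic viewpoint, via $s_i(x)=\max_{|S|=i}\min_{j\in S}f(x,\zeta_j)$ (or the equivalent threshold-counting characterization), handles every $i$ uniformly in one stroke, needs no bookkeeping of which index attains which value, and is manifestly insensitive to the tie-breaking rule \eqref{lowest_index} --- a point where the paper's index-tracking argument is delicate, since its step $f(y_1,\zeta_n)\le f(y_1,\zeta_m)$ silently requires $\zeta_n$ to still be an eligible candidate in the maximum defining $s_2(y_1)$, which is exactly the kind of issue ties can create. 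You also spell out the second stage --- that the weighted power mean with $w_i\ge 0$ and $\lambda>0$ preserves the ordering, which requires $s_i\ge 0$ from assumption (iii) so that $t\mapsto t^{\lambda}$ and $t\mapsto t^{1/\lambda}$ are increasing on $[0,\infty)$ --- whereas the paper passes from monotonicity of the $s_i$ to monotonicity of $\phi$ without comment. What the paper's route buys is self-containedness (nothing beyond the raw definition of the $s_i$ is used); what yours buys is rigor, uniformity in $i$, and brevity, at the modest cost of invoking a standard identity for order statistics.
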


\begin{proof}
First, we show that each auxiliary function $s_i(x), i=1,2,\ldots,p,$ is an increasing function on $\mathcal{X}$. Let $x_1,~ x_2 \in \mathcal{X}$ be such that $x_1 \geq x_2.$ Then, $f(x_1, \zeta_i) \geq f(x_2, \zeta_i),~ i=1,2,\ldots,p.$ By taking maximum on both sides over each scenario, we get $\underset{\zeta_i \in \Omega}{\max}~f(x_1, \zeta_i) \geq \underset{\zeta_i \in \Omega}{\max}~f(x_2, \zeta_i)$ which implies that $s_1(x_1) \geq s_1(x_2).$ This shows that $s_1(x)$ is an increasing function on $\mathcal{X}$. Let $s_1(x_1)=f(x_1, \zeta_j) ~\text{and}~s_1(x_2)=f(x_2, \zeta_k), ~\text{ for some}~ j, k \in \Omega.$ Next, we show that $s_2(x)$ is an increasing function on $\mathcal{X}$. On the contrary, suppose that $s_2(x)$ is not an increasing function. Then, there exist $y_1,y_2$ with $y_1 > y_2$ such that $s_2(y_1) < s_2(y_2),$
\begin{align*}
    &\text{ i.e., }~\max\{\{f(y_1, \zeta_1),f(y_1, \zeta_2), \ldots, f(y_1, \zeta_p)\} \setminus s_1(y_1)\} \\& \hspace{7cm}< \max\{\{f(y_2, \zeta_1),f(y_2, \zeta_2), \ldots, f(y_2, \zeta_p)\} \setminus s_1(y_2)\},\\
    &\text{ i.e., }~ f(y_1, \zeta_m) <  f(y_2, \zeta_n), \text{ for some } m,n \in \Omega \text{ and } n \neq j, m \neq k,\\
    &\text{ i.e., }~ f(y_1, \zeta_n) \leq f(y_1, \zeta_m) < f(y_2, \zeta_n),\\
    &\text{ i.e., }~ f(y_1, \zeta_n) < f(y_2, \zeta_n).
\end{align*}
This implies that $f(x, \zeta_n)$ is an increasing function, which is contradictory. Therefore, $s_2(x)$ is an increasing function on $\mathcal{X}$. In a similar way, we can show that each auxiliary function $s_i(x),~ i=3,4,\ldots,p,$ is also increasing on $\mathcal{X}.$ Hence, the function $\phi(x),$ defined by \eqref{phi}, is an increasing function on $\mathcal{X}.$
\end{proof}

\begin{corollary}
Let $\mathcal{X}$ be a nonempty compact subset of $\mathbb{R}.$ Consider an uncertain single objective optimization problem $\mathcal{P}(\Omega).$ If the functions $f(x,\zeta_i),~i=1,2,\ldots,p,$ all are decreasing on $\mathcal{X},$ then the function $\phi(x),$ as defined by \eqref{phi}, is decreasing on $\mathcal{X}.$
\end{corollary}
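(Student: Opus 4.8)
The plan is to deduce this corollary from Theorem \ref{monotonicity_of_phi} by a reflection argument, rather than rerunning the monotonicity computation with every inequality reversed. First I would introduce the reflected feasible set $\mathcal{X}' = \{-x : x \in \mathcal{X}\}$, which is again a nonempty compact subset of $\mathbb{R}$, and define the reflected objective $g : \mathcal{X}' \times \Omega \to \mathbb{R}$ by $g(y, \zeta_i) = f(-y, \zeta_i)$ for each $i = 1, 2, \ldots, p$. Since $y \mapsto -y$ is order-reversing and each $f(\cdot, \zeta_i)$ is decreasing on $\mathcal{X}$, each $g(\cdot, \zeta_i)$ is increasing on $\mathcal{X}'$.

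Second, I would verify that the auxiliary functions attached to the reflected problem are exactly the reflections of the original ones, that is, $\tilde{s}_i(y) = s_i(-y)$ for all $i$ and all $y \in \mathcal{X}'$. This follows directly from the definitions \eqref{s_1}--\eqref{s_i}: the value $s_i(x)$ depends only on the collection of numbers $\{f(x, \zeta_1), \ldots, f(x, \zeta_p)\}$ together with the lowest-index tie-breaking rule \eqref{lowest_index}, and at $y$ this collection for $g$ coincides with the one for $f$ evaluated at $x = -y$. Since the index set $\{1, 2, \ldots, p\}$, and hence the tie-breaking convention, is unaffected by the reflection, the recursion produces $\tilde{s}_i(y) = s_i(-y)$. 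Consequently the GOWA objective of the reflected problem satisfies $\tilde{\phi}(y) = \phi(-y)$ for all $y \in \mathcal{X}'$.

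Finally, since each $g(\cdot, \zeta_i)$ is increasing on $\mathcal{X}'$, Theorem \ref{monotonicity_of_phi} gives that $\tilde{\phi}$ is increasing on $\mathcal{X}'$. Translating back, for $x_1, x_2 \in \mathcal{X}$ with $x_1 \geq x_2$ we have $-x_1 \leq -x_2$ in $\mathcal{X}'$, so $\tilde{\phi}(-x_1) \leq \tilde{\phi}(-x_2)$, i.e., $\phi(x_1) \leq \phi(x_2)$. This is precisely the statement that $\phi$ is decreasing on $\mathcal{X}$.

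The argument has essentially no hard part; the only point requiring care is the second step, namely confirming that reflection commutes with the construction of the $s_i$'s, including the tie-breaking rule \eqref{lowest_index}. A fully self-contained alternative would be to mimic the proof of Theorem \ref{monotonicity_of_phi} with all inequalities reversed, but the reflection reduction is cleaner and reuses the theorem directly without reworking the proof-by-contradiction on $s_2$ and the inductive step for $s_i$.
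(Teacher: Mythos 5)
Your proof is correct. Note that the paper itself offers no proof of this corollary: it is stated immediately after Theorem \ref{monotonicity_of_phi}, with the evident intention that one repeats the theorem's argument with all inequalities reversed (re-running the proof-by-contradiction for $s_2$ and the induction for the remaining $s_i$'s). Your reflection argument is a genuinely different, and in fact tighter, route: by passing to $\mathcal{X}' = -\mathcal{X}$ and $g(y,\zeta_i) = f(-y,\zeta_i)$, you make the statement literally a corollary of the theorem rather than an analogue proved by a parallel argument. The one point that needs care is exactly the one you identified: that the construction of the auxiliary functions \eqref{s_1}--\eqref{s_i}, including the lowest-index tie-breaking rule \eqref{lowest_index}, commutes with the reflection. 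This holds because at each $y \in \mathcal{X}'$ the indexed collection of values $\{g(y,\zeta_1),\ldots,g(y,\zeta_p)\}$ coincides, index by index, with $\{f(-y,\zeta_1),\ldots,f(-y,\zeta_p)\}$, so the set of maximizing indices (and hence the lowest one removed at each stage of the recursion) is identical; an easy induction then gives $\tilde{s}_i(y) = s_i(-y)$ and $\tilde{\phi}(y) = \phi(-y)$. You also implicitly need that the reflected problem is a legitimate instance of $\mathcal{P}(\Omega)$ (nonempty compact feasible set, nonnegative objectives), which is immediate. What your approach buys is reuse of the theorem as a black box; what the paper's implicit approach buys is independence from any structure of the domain (it would survive generalizations where a reflection is unavailable), at the cost of duplicating the entire monotonicity argument.
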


\begin{definition} (Increasing function along a direction).
     A ray at $\bar{x}$ in the direction $d$ is the set $R(\bar{x})= \{x \in \mathbb{R}^n : x=\bar{x}+td, ~t \in \mathbb{R}\}.$ A function $\psi : \mathbb{R}^n \to \mathbb{R}$ is increasing along a direction $d$ on the ray $R(\bar{x})$ if for any $x_1, x_2 \in \mathbb{R}^n,$ we have $x_1 \geq x_2$ if and only if $t_1 \geq t_2,$ where $x_i=\bar{x}+t_id,~ i=1,2.$
\end{definition}

\begin{corollary}
   Let $\mathcal{X}$ be a compact subset of $\mathbb{R}^n,~n\geq 2.$ For a given uncertain single objective optimization problem $\mathcal{P}(\Omega)$ if $f(x, \zeta_i), i=1,2,\ldots,p,$ are all increasing along a direction $d \in \mathbb{R}^n$ emanated from $\bar{x} \in \text{int}~\mathcal{X},$ then $\phi$ is an increasing function along the direction $d$. 
\end{corollary}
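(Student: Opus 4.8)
The plan is to reduce this multivariate statement to the one-dimensional monotonicity result already established in Theorem \ref{monotonicity_of_phi}. Fix the (nonzero) direction $d$ and the base point $\bar x\in\operatorname{int}\mathcal{X}$, and parameterize the ray by writing each point of $R(\bar x)$ as $\bar x + td$. First I would introduce the restricted single-variable functions
\[ g_i(t) := f(\bar x + td,\zeta_i), \qquad i = 1,2,\ldots,p, \]
defined on the parameter set $T := \{\, t\in\mathbb{R} : \bar x + td \in \mathcal{X}\,\}$. Because $\mathcal{X}$ is compact and $t\mapsto \bar x + td$ is continuous and injective (as $d\neq 0$), the preimage $T$ is closed and bounded, hence a compact subset of $\mathbb{R}$; moreover $\bar x\in\operatorname{int}\mathcal{X}$ guarantees that $T$ is a nondegenerate neighborhood of $t=0$. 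By hypothesis each $f(\cdot,\zeta_i)$ is increasing along $d$, which is precisely the statement that each $g_i$ is an increasing function of $t$ on $T$.

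The crucial observation is that the auxiliary-function construction \eqref{s_1}--\eqref{s_i}, together with the tie-breaking convention \eqref{lowest_index}, commutes with the restriction to the ray. Indeed, at the point $x = \bar x + td$ the collection of scenario values $\{f(\bar x + td,\zeta_1),\ldots,f(\bar x + td,\zeta_p)\}$ coincides with $\{g_1(t),\ldots,g_p(t)\}$, and the lowest-index rule \eqref{lowest_index} selects indices in the same way in both descriptions. Consequently, if $\tilde s_1,\ldots,\tilde s_p$ denote the sorted auxiliary functions built from $g_1,\ldots,g_p$, then
\[ s_i(\bar x + td) = \tilde s_i(t), \qquad i = 1,2,\ldots,p,\ \ t\in T. \]
Feeding this identity into \eqref{phi} with the same weights $w_i$ and the same exponent $\lambda$ yields
\[ \phi(\bar x + td) = \left(\sum_{i=1}^{p} w_i\,(\tilde s_i(t))^{\lambda}\right)^{1/\lambda} =: \tilde\phi(t), \]
so that $\tilde\phi$ is exactly the GOWA robust objective of the one-dimensional uncertain problem on $T$ whose scenario objectives are $g_1,\ldots,g_p$.

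With this reduction in place, the conclusion would follow by applying Theorem \ref{monotonicity_of_phi} to the one-dimensional problem on $T$: since each $g_i$ is increasing on the compact set $T\subseteq\mathbb{R}$, the associated GOWA objective $\tilde\phi$ is increasing on $T$, i.e.\ $t_1\geq t_2$ implies $\tilde\phi(t_1)\geq\tilde\phi(t_2)$. Translating back through $\tilde\phi(t)=\phi(\bar x + td)$ then gives that $\phi$ is increasing along the direction $d$ on $R(\bar x)$, as required. I expect the main thing to verify carefully is the commutation identity $s_i(\bar x + td)=\tilde s_i(t)$: one must check that the ordering of the scenario values and the lowest-index tie-breaking are genuinely unaffected by the restriction, since these are the only data entering the definitions \eqref{s_1}--\eqref{lowest_index}. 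A secondary point worth noting is that Theorem \ref{monotonicity_of_phi} is stated for an arbitrary compact subset of $\mathbb{R}$ rather than an interval, so the argument remains valid even when the line through $\bar x$ meets $\mathcal{X}$ in a disconnected set, and no convexity of $\mathcal{X}$ is needed.
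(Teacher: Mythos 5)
Your proposal is correct and follows essentially the same route as the paper's own proof: restrict the scenario functions to the ray $R(\bar{x})$, identify that ray (intersected with $\mathcal{X}$) with a compact subset of $\mathbb{R}$, and invoke Theorem \ref{monotonicity_of_phi}. The paper compresses this into the single remark that $R(\bar{x})$ is homeomorphic to $\mathbb{R}$, whereas you additionally verify the pointwise commutation $s_i(\bar{x}+td)=\tilde{s}_i(t)$, which is a worthwhile detail but not a different argument.
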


\begin{proof}
    As each function $f(x,\zeta_i),~ i=1,2,\ldots,p,$ is increasing on $R(\bar{x})= \{x \in \mathbb{R}^n : x=\bar{x}+td, ~t \in \mathbb{R}\},$ and $R(\bar{x})$ is homeomarphic to the real line $\mathbb{R},$ by Theorem \ref{monotonicity_of_phi}, the function $\phi$ is increasing along $d$ at $\bar{x}.$ 
\end{proof}

\begin{example}\label{example_for_nonmonotonus}
 (Example for nonmonotonous $\phi(x)$).
Let the uncertainty set be $\Omega=\{0,\frac{\pi}{2}\},$ and the feasible set be $\mathcal{X}=[0,1].$ We define $f:\mathcal{X}\times\Omega \to \mathbb{R}$ by $f(x,\zeta)= \cos(\zeta)x^3+\sin(\zeta)(1-x^2).$\\
For $\zeta_1=\pi/2,~f(x,\zeta_1)=x^3,$ and for $\zeta_2=\pi,~f(x,\zeta_2)=1-x^2.$ 

\noindent
Graphs of $f(x,\zeta_1)$ and $f(x,\zeta_2)$ are drawn in Figure \ref{nonmono}.

\noindent
Note that
\begin{align*}
    s_1(x)
    & = \begin{dcases}
        1-x^2, & ~\text{if}~ x \in [0,0.755] \\
        x^3, & ~\text{if}~ x \in  [0.755,1],~~~~~ \text{and} ~~~
    \end{dcases}\\
    s_2(x)
     &= \begin{dcases}
        x^3, & ~\text{if}~ x \in [0,0.755] \\
        1-x^2, & ~\text{if}~ x \in  [0.755,1]. 
    \end{dcases} 
\end{align*}
Let us take $w_1=\frac{1}{3}, $ $ w_2=\frac{2}{3} $  and $ \lambda = 2.$ Then,
\begin{align*}
    \phi(x)& = \begin{dcases}
        \left(\tfrac{1}{3}(1-2x^2+x^4+2x^6)\right)^{\tfrac{1}{2}}, &~\text{if}~ x \in [0,0.755] \\
        \left(\tfrac{1}{3}(2-4x^2+2x^4+x^6)\right)^{\tfrac{1}{2}}, &~\text{if}~ x \in  [0.755,1]. 
    \end{dcases}
\end{align*}
Hence, we see that $f(x,\zeta_1)$ is increasing and $f(x,\zeta_2)$ is decreasing on $\mathcal{X},$ but $\phi(x)$ is neither increasing nor decreasing (shown in Figure \ref{nonmono}).
\begin{figure}[H]
    \centering
    \includegraphics[scale=0.4]{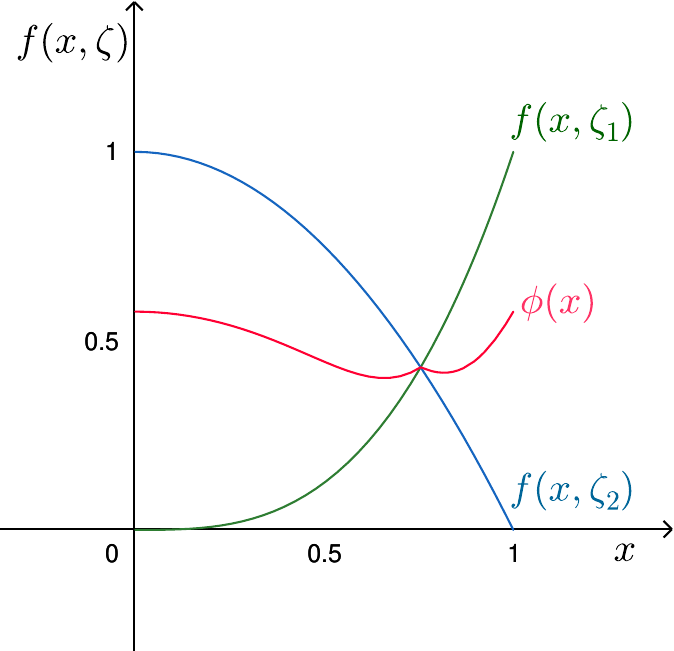}
    \caption{The functions that contribute to Example \ref{example_for_nonmonotonus}}
    \label{nonmono}
\end{figure}
\end{example}

\section{Relation of GOWA robustness with other robustness}\label{relation}

\subsection{Min-max robust efficiency}
One of the widely used concepts of robustness for uncertain single objective optimization problems is the concept of min-max robust optimality, originally introduced by Soyster \cite{soyster} and extensively studied in \cite{Ben}.\\
 A geometrical interpretation of min-max robustness is given in Figure \ref{min-max_figure}. For instance, let us take $p=4,$ and $\mathcal{X}=[0,\zeta].$ Graphs of the objective functions for different scenarios are drawn in Figure \ref{min-max_figure}. First, we take the maximum of all four objective functions, as shown by the dotted curve, and then minimize it on $\mathcal{X}.$ The minimum point of this maximum function (dotted graph) is $\bar{x},$ which is the min-max robust solution.  
\begin{figure}[H]
    \centering
    \includegraphics[scale=0.65]{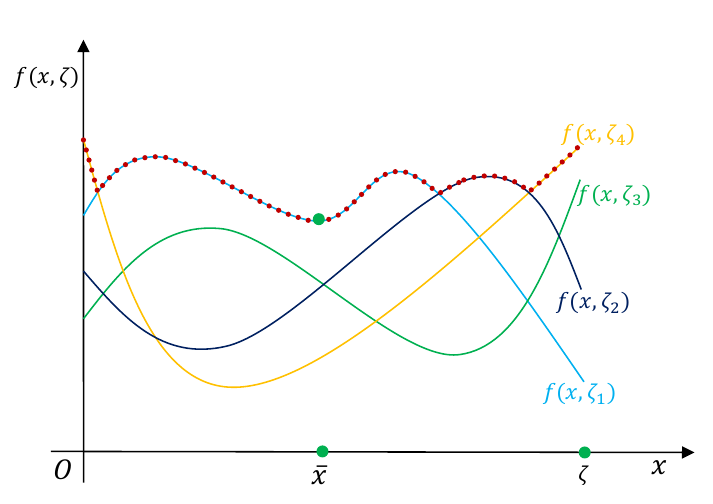}
    \caption{The functions that contribute to min-max robustness}
    \label{min-max_figure}
\end{figure}

\begin{proposition}
\emph{(Relation with min-max robustness).}\label{min-maxthm}
For a given uncertain single objective optimization problem $\mathcal{P}(\Omega),$ let $w_1=1,~w_i=0, ~i=2,3,\ldots,p$~~and~~$\lambda=1$. Then, a GOWA robust optimal solution is a min-max robust optimal solution and vice-versa.
\end{proposition}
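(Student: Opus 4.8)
The plan is to substitute the prescribed parameters directly into the definition \eqref{phi} of $\phi$ and show that the GOWA robust counterpart \eqref{GOWA} becomes literally identical to the min-max robust counterpart \eqref{min-max}. Once the two optimization problems are seen to coincide, the equivalence of optimal solutions in both directions is immediate, so the bulk of the work is a one-line simplification followed by recognizing $s_1$ as the worst-case function.

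First I would set $w_1 = 1$, $w_i = 0$ for $i = 2, 3, \ldots, p$, and $\lambda = 1$ in \eqref{phi}. Every summand $w_i (s_i(x))^\lambda$ with $i \geq 2$ then vanishes, so the sum $\sum_{i=1}^{p} w_i (s_i(x))^\lambda$ collapses to $w_1 (s_1(x))^1 = s_1(x)$, and since the outer exponent is $1/\lambda = 1$, this yields $\phi(x) = s_1(x)$ for every $x \in \mathcal{X}$. Next I would invoke the defining equation \eqref{s_1}, namely $s_1(x) = \max\{f(x,\zeta_1), f(x,\zeta_2), \ldots, f(x,\zeta_p)\} = \max_{\zeta \in \Omega} f(x,\zeta)$. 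Combining these two observations gives $\phi(x) = \max_{\zeta \in \Omega} f(x,\zeta)$ on $\mathcal{X}$, which is exactly the objective appearing in the min-max robust counterpart \eqref{min-max}. Because the feasible set $\mathcal{X}$ is common to both formulations, the problem \eqref{GOWA} and the problem \eqref{min-max} are the very same minimization problem.

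From this identity of problems the conclusion follows at once: a point $\bar{x} \in \mathcal{X}$ minimizes $\phi$ over $\mathcal{X}$ if and only if it minimizes $\max_{\zeta \in \Omega} f(\,\cdot\,,\zeta)$ over $\mathcal{X}$, so $\bar{x}$ is a GOWA robust optimal solution under these parameters precisely when it is a min-max robust optimal solution. I do not expect any genuine obstacle here, as the argument is a direct substitution together with the recognition that $s_1$ equals the worst-case function. The only point warranting a word of care is that both directions of the ``and vice-versa'' claim rest on the \emph{literal equality} of the two optimization problems, rather than on a weaker inclusion between their optimal solution sets.
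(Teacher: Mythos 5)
Your proposal is correct and follows exactly the paper's own argument: substituting $w_1=1$, $w_i=0$ for $i\geq 2$, and $\lambda=1$ into \eqref{phi} gives $\phi(x)=s_1(x)=\max_{\zeta\in\Omega}f(x,\zeta)$, making \eqref{GOWA} literally identical to \eqref{min-max}. The paper's proof is precisely this substitution followed by the observation that the identity of objectives ``trivially implies the result,'' so there is nothing to add.
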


\begin{proof}
Since $w_1=1,$ $w_i=0$ $\text{ for all }$ $i=2,3,\ldots,p$, by \eqref{phi} we have
$$\phi(x)=s_1(x)= \max~\{f(x,\zeta_1),f(x,\zeta_2),\ldots,f(x,\zeta_p)\}.$$
This expression of $\phi$ trivially implies the result.
\end{proof}

 In the next example, we show that not for any chosen $w_i^,$s and $\lambda,$ the set of GOWA robust optimum is identical to the set of min-max robust optimum.

\begin{example}\label{gowa_optimum_is_not_equal_to_min_max_optimum}
Let the uncertainty set be $\Omega = \{\pi/2,\pi\}$ and the feasible set be $\mathcal{X}=[-1,1].$ We define $f:\mathcal{X}\times \Omega \to \mathbb{R}$ by $f(x,\zeta)=x\cos(\zeta)+x^3\sin(\zeta)+1.$\\
For $\zeta_1=\pi/2,~f(x,\zeta_1)=x^3+1$
and for $\zeta_2=\pi,~f(x,\zeta_2)=-x+1.$ Graphs of $f(x,\zeta_1)$ and $f(x,\zeta_2)$ are drawn in Figure \ref{min-maxgowaexample_1}.

\noindent
Let us take $w_1=\tfrac{3}{10},~ w_2=\tfrac{7}{10} $  and $ \lambda = \tfrac{1}{2}.$

\noindent
Then, the min-max robust objective function is
 \begin{equation*}
\bar{\phi}(x)=\underset{\zeta\in \Omega}{\max}~ f(x,\zeta)=\left\{
        \begin{array}{ll}
            -x+1, & \quad ~~~\text{if}~~x \in [-1, 0] \\
            x^3+1, & \quad~~~\text{if}~~ x \in [0, 1].
        \end{array}
    \right.
\end{equation*}
The GOWA robust objective function is
 \begin{equation*}
\phi(x)=\left(w_1(b_1(x))^{\tfrac{1}{2}}+w_2(b_2(x))^{\tfrac{1}{2}}\right)^{2}=\left\{
        \begin{array}{ll}
           \left(\tfrac{3}{10}(-x+1)^{\tfrac{1}{2}}+\tfrac{7}{10}(x^3+1)^{\tfrac{1}{2}}\right)^2, & \quad~~~\text{if}~~ x \in [-1, 0] \\
            \left(\tfrac{3}{10}(x^3+1)^{\tfrac{1}{2}}+\tfrac{7}{10}(-x+1)^{\tfrac{1}{2}}\right)^2, & \quad~~~\text{if}~~ x \in [0, 1].
        \end{array}
    \right.
\end{equation*}
Graphs of $\bar{\phi}(x)$ and $\phi(x)$ are drawn in the Figure \ref{min-maxgowaexample_2}. From Figure \ref{min-maxgowaexample_2}, we notice that the set of min-max robust optimal solution of $\mathcal{P}(\Omega)$ is $\{0\},$ and the set of GOWA robust optimal solution of $\mathcal{P}(\Omega)$ is $\{-1,1\}.$
\begin{figure}[H]
    \centering
    \subfloat[The objective functions for different scenarios\label{min-maxgowaexample_1}]{\includegraphics[width=5cm]{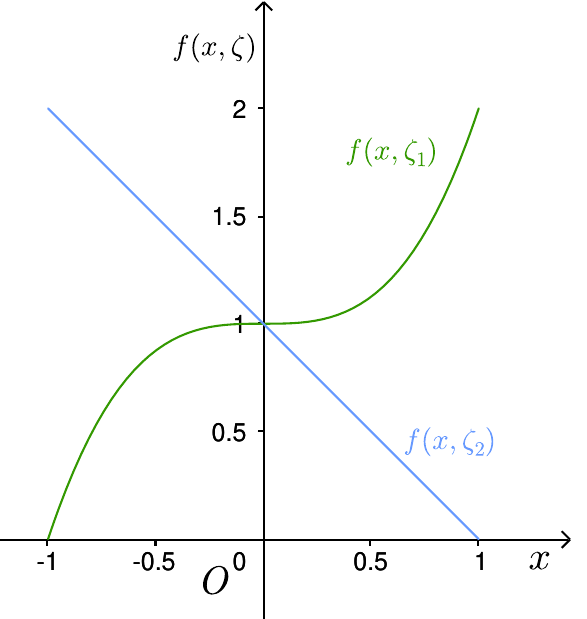} }
    \qquad \qquad 
    \subfloat[The function $\phi(x)$\label{min-maxgowaexample_2}]{{\includegraphics[width=5cm]{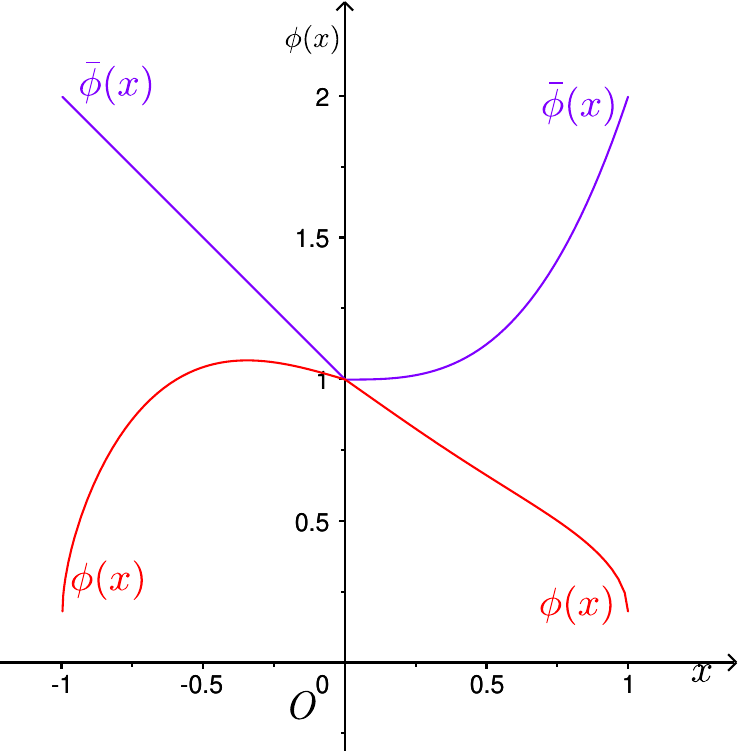} }}
    \caption{The functions that contribute to Example \ref{gowa_optimum_is_not_equal_to_min_max_optimum}}
\end{figure}
\end{example}
\subsection{Light robust optimality}\label{light_robust_optimality}
\begin{definition} \cite{Robustness}
For a given uncertain single objective optimization problem $\mathcal{P}(\Omega)$, assume that $\hat{x}$ is an optimal solution to the optimization problem $\mathcal{P}(\hat{\zeta})$ of the nominal scenario $\hat{\zeta}.$ Then, an element $\bar{x} \in \mathcal{X}$ is called a lightly robust optimal solution to $\mathcal{P}(\Omega)$ with respect to a given $\epsilon \geq 0$  if it is an optimal solution to
 \begin{align} \label{lightrobust}
\left.\begin{aligned}
&\min  &&\underset{\zeta\in \Omega}{\max}~ f(x,\zeta)\\
&\text{subject to} \quad && f(x,\hat{\zeta})\leq f(\hat{x},\hat{\zeta})+\epsilon\\
&\quad && x \in \mathcal{X}.
\end{aligned}\right.
\end{align}
\end{definition}
The geometrical interpretation of light robust optimality is shown in Figure \ref{light}. For instance, let us take $p=4.$ Graphs of the objective functions for different scenarios are drawn in Figure \ref{light}. For a given $\epsilon >0,$ by taking an additional condition $f(x,\hat{\zeta}) \leq f(\hat{x},\hat{\zeta})+\epsilon$ on $\mathcal{X},$ we take the maximum of all four objective functions, as shown by the dotted curve, and then minimize it on $\mathcal{X}.$  
\begin{figure}[H]
    \centering
    \includegraphics[scale=0.60]{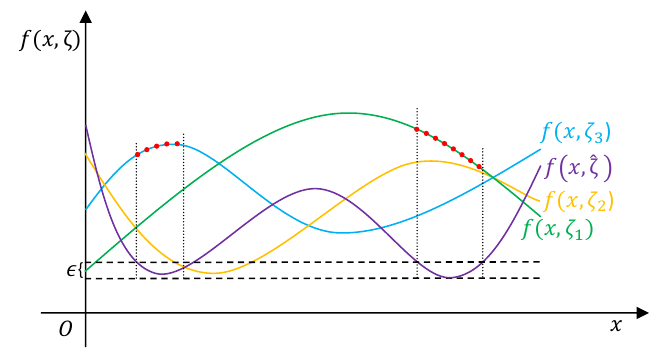}
    \caption{Functions involved in light robustness}
    \label{light}
\end{figure}

\begin{example}
     In Example \ref{GOWAexample_1}, let the nominal scenario be $\hat{\zeta}=\zeta_1$ and $\epsilon = \tfrac{1}{2}$. The minimum solution of $f(x, \hat{\zeta})=\sin(x)+1$ on $\mathcal{X}$ is $\hat{x}=-1.570$ and optimal value is $f(\hat{x},\hat{\zeta})=0$. Thus, a lightly robust optimal solution set to $\mathcal{P}(\Omega)$ will be obtained by solving the following problem:

\begin{equation*}
\begin{aligned}
& \min
& & \underset{\zeta\in \Omega}{\max}~ f(x,\zeta) \\
& \text{subject to}
& & \sin{x}+1 \leq \tfrac{1}{2} \\
&&& x \in \mathcal{X}.
\end{aligned}
\end{equation*}
    
which is $\{-1.283\}$. Hence, for any chosen $w_i$ and $\epsilon >0$, a lightly robust optimal solution is not necessarily equal to a GOWA robust optimal solution.
\end{example}

\subsection{Min-min robust efficiency}
\begin{definition} \cite{ide}
For a given uncertain single objective optimization problem $\mathcal{P}(\Omega)$, an element $\bar{x} \in \mathcal{X}$ is called a min-min robust optimal solution to $\mathcal{P}(\Omega)$ if it is an optimal solution to 
 \begin{align}\label{min-min}
\left.\begin{aligned}
&\min   &&\underset{\zeta\in \Omega}{\min}~ f(x,\zeta)\\
&\text{subject to} \quad && x \in \mathcal{X}.\\
\end{aligned}\right.
\end{align}
\end{definition}
A geometrical illustration of min-min robustness is shown in Figure \ref{min-minfig}. For instance, let us take $p=4.$ Graphs of the objective functions for different scenarios are drawn in Figure \ref{min-minfig}. First, we take the minimum of all four objective functions, as shown by the dotted curve, and then minimize it over $\mathcal{X}.$
\begin{figure}[H]
    \centering
    \includegraphics[scale=0.6]{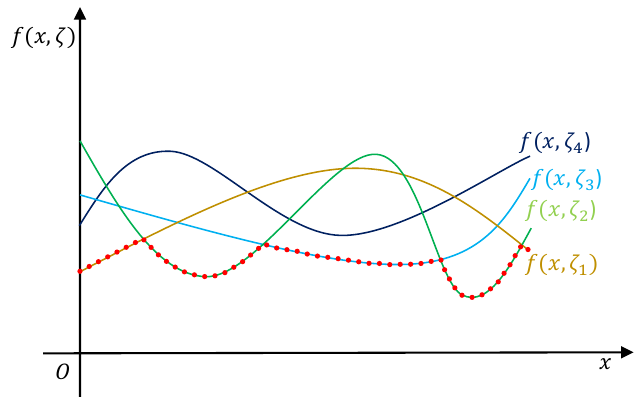}
    \caption{Functions involved in min-min robustness}
    \label{min-minfig}
\end{figure}

\begin{proposition}
\emph{(Relation with min-min robustness).}\label{min-minthm}
For a given uncertain single objective optimization problem $\mathcal{P}(\Omega),$ let $w_p=1,~w_i=0, ~i=1,2,\ldots,p-1$~~and~~$\lambda=1$. Then, a GOWA robust optimal solution is a min-min robust optimal solution and vice-versa.
\end{proposition}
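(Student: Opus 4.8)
The plan is to mirror the structure of Proposition \ref{min-maxthm}, exploiting the fact that the weighting collapses the GOWA objective onto a single auxiliary function. First I would substitute the given weights $w_p = 1$ and $w_i = 0$ for $i = 1, 2, \ldots, p-1$, together with $\lambda = 1$, directly into the definition \eqref{phi}. Because only the $p$-th term survives and the exponent is trivial, the objective reduces to
\begin{align*}
\phi(x) = \left(\sum_{i=1}^{p} w_i (s_i(x))^{\lambda}\right)^{1/\lambda} = s_p(x).
\end{align*}

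The key step is then to identify the lowest-ranked auxiliary function $s_p(x)$ with the pointwise minimum of the objective functions over all scenarios. From the recursive definition \eqref{s_1}--\eqref{s_i}, the functions satisfy $s_1(x) \geq s_2(x) \geq \cdots \geq s_p(x)$ for all $x \in \mathcal{X}$, and the collection $\{s_1(x), s_2(x), \ldots, s_p(x)\}$ is exactly a reordering of $\{f(x,\zeta_1), f(x,\zeta_2), \ldots, f(x,\zeta_p)\}$ (accounting for ties via the convention \eqref{lowest_index}). Hence $s_p(x)$ is the smallest among these values, giving
\begin{align*}
s_p(x) = \min\{f(x,\zeta_1), f(x,\zeta_2), \ldots, f(x,\zeta_p)\} = \underset{\zeta \in \Omega}{\min}~ f(x,\zeta).
\end{align*}
It follows that $\phi(x) = \underset{\zeta \in \Omega}{\min}~ f(x,\zeta)$, which is precisely the objective of the min-min robust counterpart \eqref{min-min}. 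Since the two optimization problems share the same feasible set $\mathcal{X}$ and the same objective function, their optimal solution sets coincide, establishing the desired equivalence in both directions.

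I do not anticipate a serious obstacle, as this is essentially the dual statement to Proposition \ref{min-maxthm}. The only point requiring mild care is the justification that $s_p(x)$ equals the pointwise minimum even at points where several $f(x,\zeta_j)$ coincide; here the tie-breaking rule \eqref{lowest_index} guarantees that each $f(x,\zeta_j)$ is assigned to exactly one $s_i(x)$, so the multiset $\{s_i(x)\}$ is a genuine permutation of $\{f(x,\zeta_i)\}$ and the minimum is attained by $s_p(x)$ regardless of which indices collide. Apart from this bookkeeping observation, the proof is an immediate consequence of the definitions.
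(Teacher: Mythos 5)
Your proposal is correct and follows essentially the same route as the paper: substitute the weights so that $\phi(x)=s_p(x)$, identify $s_p(x)$ with $\min_{\zeta\in\Omega} f(x,\zeta)$, and conclude that the GOWA and min-min counterparts have identical objectives over the same feasible set. If anything, your treatment is slightly tidier than the paper's, since you justify the identification $s_p(x)=\min_{\zeta\in\Omega}f(x,\zeta)$ via the permutation/tie-breaking argument (which the paper simply asserts) and you obtain both directions at once rather than arguing them separately.
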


\begin{proof}
Let $\bar{x}$ be the GOWA robust optimal solution of the problem $\mathcal{P}(\Omega)$. Then, $\bar{x}$ is a solution of \eqref{GOWA}.
Since $w_p=1,$ $w_i=0$ for all $i=1,2,\ldots,p-1$, therefore by \eqref{phi}, we have
$$\phi(x)=s_p(x)= \min~\{f(x,\zeta_1),f(x,\zeta_2),\ldots,f(x,\zeta_p)\}.$$ 
Hence, for all $x\in\mathcal{X}$, we can write $\phi(x)$ as a minimization problem given by
 \begin{align}\label{minx}
\phi(x)=\min_{\zeta \in \Omega}~f(x,\zeta).
\end{align}
By using \eqref{GOWA} and \eqref{minx}  , $\bar{x}$ is a min-min robust optimal solution of $\mathcal{P}(\Omega).$

\noindent

Next, let $\bar{x}$ be the solution of the problem \eqref{min-min}. In \eqref{min-min}, we represent $\phi(x)$ as a minimization problem given by \eqref{minx}. Since $w_p=1,$ $w_i=0$ $ ~\text{ for all }~$ $i=1,2,\ldots,p-1$, therefore by \eqref{phi}, we have
\begin{align}\label{b_p(x_1)}
\phi(x)=s_p(x)= \min~\{f(x,\zeta_1),f(x,\zeta_2),\ldots,f(x,\zeta_p)\}.
\end{align}
Thus, using \eqref{min-max} and \eqref{b_p(x_1)}, $\bar{x}$ is a GOWA robust optimal solution of the uncertain single objective optimization problem $\mathcal{P}(\Omega).$
\end{proof}

We end this section with two results showing that GOWA robust optimum value lies between the min-min robust optimum and min-max robust optimum values.

\begin{theorem}\label{boundbymin-max}
For any given $w_i^,$s and $\lambda,$\\
\begin{enumerate}
    \item [\emph{(a)}] GOWA robust optimal value of $\mathcal{P}(\Omega)$ never exceeds min-max robust optimal value.
    \item[\emph{(b)}] GOWA robust optimum value is greater than or equal to a min-min robust optimum value. 
\end{enumerate}
\end{theorem}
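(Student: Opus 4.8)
The plan is to prove both parts simultaneously by first establishing a pair of \emph{pointwise} inequalities that sandwich $\phi$ between the largest and smallest of the ordered values, namely
\[ s_p(x) \;\le\; \phi(x) \;\le\; s_1(x) \qquad \text{for every } x \in \mathcal{X}, \]
and only afterwards passing to the infimum over $\mathcal{X}$. The conceptual reason behind this sandwich is that $\phi(x)$, as defined by \eqref{phi}, is precisely the generalized weighted power mean of the numbers $s_1(x), s_2(x), \ldots, s_p(x)$, and any such mean (for weights summing to one) must lie between the minimum and the maximum of the numbers being aggregated.

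To obtain the upper bound, I would fix $x \in \mathcal{X}$ and exploit the ordering $s_1(x) \ge s_2(x) \ge \cdots \ge s_p(x) \ge 0$, where nonnegativity is guaranteed by assumption (iii). Since $t \mapsto t^{\lambda}$ is nondecreasing on $[0,\infty)$ for $\lambda > 0$, and since $\sum_{i=1}^{p} w_i = 1$, we have
\[ \sum_{i=1}^{p} w_i\, (s_i(x))^{\lambda} \;\le\; \sum_{i=1}^{p} w_i\, (s_1(x))^{\lambda} \;=\; (s_1(x))^{\lambda}. \]
Because $t \mapsto t^{1/\lambda}$ is also nondecreasing on $[0,\infty)$, raising both sides to the power $1/\lambda$ yields $\phi(x) \le s_1(x)$. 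The lower bound is symmetric: bounding each term below by replacing $s_i(x)$ with $s_p(x)$ gives $\sum_{i=1}^{p} w_i\, (s_i(x))^{\lambda} \ge (s_p(x))^{\lambda}$, and the $\lambda$-th root delivers $\phi(x) \ge s_p(x)$.

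Next I would identify the two extremes with the relevant robust objectives. By \eqref{s_1}, $s_1(x) = \max_{\zeta \in \Omega} f(x,\zeta)$, which is the objective of the min-max counterpart \eqref{min-max}; by the construction \eqref{s_i} together with the ordering of the $s_i$'s, $s_p(x) = \min_{\zeta \in \Omega} f(x,\zeta)$, the objective of the min-min counterpart \eqref{min-min}. For part (a), the pointwise inequality $\phi(x) \le s_1(x)$ holding on all of $\mathcal{X}$ gives, for every $x$, $\min_{x \in \mathcal{X}} \phi(x) \le \phi(x) \le s_1(x)$; hence $\min_{x \in \mathcal{X}} \phi(x)$ is a lower bound for $s_1$ on $\mathcal{X}$, so $\min_{x \in \mathcal{X}} \phi(x) \le \min_{x \in \mathcal{X}} s_1(x)$, which is exactly the assertion that the GOWA optimal value does not exceed the min-max optimal value. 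For part (b), the dual observation $\phi(x) \ge s_p(x) \ge \min_{x \in \mathcal{X}} s_p(x)$ shows that the min-min value is a lower bound for $\phi$, whence $\min_{x \in \mathcal{X}} \phi(x) \ge \min_{x \in \mathcal{X}} s_p(x)$.

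I do not anticipate a serious obstacle; the argument is essentially a power-mean estimate followed by monotonicity of the infimum. The only points demanding care are procedural: one must invoke assumption (iii) explicitly so that both $t \mapsto t^{\lambda}$ and $t \mapsto t^{1/\lambda}$ act as order-preserving maps (the inequalities could fail for negative values), and one must phrase the pointwise-to-minimum passage correctly, as done above, rather than through a nonexistent ``minimizing at the same point''. Existence of the minima so that the three optimal values are genuinely attained is already ensured by continuity of $\phi$ (Theorem \ref{continuity}) together with compactness of $\mathcal{X}$, so the statement is well-posed and the proof is complete.
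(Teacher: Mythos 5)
Your proof is correct and takes essentially the same route as the paper: both arguments rest on the pointwise bounds $s_p(x) \le \phi(x) \le s_1(x)$, obtained from the ordering of the $s_i$'s, monotonicity of $t \mapsto t^{\lambda}$ and $t \mapsto t^{1/\lambda}$, and $\sum_{i=1}^{p} w_i = 1$, after which one passes to optimal values (the paper evaluates the inequalities at the respective minimizers $x_0$ and $x^\ast$, while you take infima over $\mathcal{X}$ --- a purely cosmetic difference). Your explicit invocation of assumption (iii) to justify that the power maps are order-preserving, and of continuity plus compactness for attainment of the minima, are minor points of care that the paper leaves implicit.
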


\begin{proof}
\begin{enumerate}
    \item [(a)] Let us suppose that $x^\ast$ and $ x_0$ be the GOWA robust optimal solution and min-max robust optimal solution, respectively, of $\mathcal{P}(\Omega)$.
Notice that for any $x_0 \in \mathcal{X},$
\begin{align*}
    \phi(x_0) & = \left(\sum_{i=1}^{p}w_is_i(x_0)^{\lambda}\right)^{\frac{1}{\lambda}}
     \leq \left(\sum_{i=1}^{p}w_is_1(x_0)^{\lambda}\right)^{\frac{1}{\lambda}}
    = \left(s_1(x_0)^{\lambda}(\sum_{i=1}^{p} w_i)\right)^{\frac{1}{\lambda}}=s_1(x_0).
\end{align*}
As $x^\ast$ is GOWA robust optimal solution, $\phi(x^\ast) \leq \phi(x_0).$ So $\phi(x^\ast) \leq \phi(x_0) \leq s_1(x_0).$ As $s_1(x_0)$ is the min-max robust optimal value of $\mathcal{P}(\Omega),$ the result follows.
\item[(b)] Let us suppose that $x^\ast$ and $ x_0$ be the GOWA robust optimum solution and min-min robust optimum solution to $\mathcal{P}(\Omega),$ respectively.
Note that for any chosen $w_i^,$s and $\lambda,$
 \begin{align}\label{GOWA_lower_bound_with_minmin}
     \phi(x^\ast) &= \left(\sum_{i=1}^{p}w_is_i(x^\ast)^{\lambda}\right)^{\frac{1}{\lambda}}
    \geq \left(\sum_{i=1}^{p}w_is_p(x^\ast)^{\lambda}\right)^{\frac{1}{\lambda}}
     = \left(s_p(x^\ast)^{\lambda}(\sum_{i=1}^{p} w_i)\right)^{\frac{1}{\lambda}}= s_p(x^\ast)
 \end{align}
 As $x_0 \in \mathcal{X}$ is a min-min robust optimal solution, $s_p(x_0) \leq s_p(x^\ast).$ So, $s_p(x_0) \leq s_p(x^\ast) \leq \phi(x^\ast)$, i.e., $s_p(x_0) \leq \phi(x^\ast).$ As $s_p(x_0) $ is the min-min robust optimum value of $\mathcal{P}(\Omega),$ the result follows.
\end{enumerate}
\end{proof}

\section{Clarke subdifferential for GOWA robustness} \label{subdiffrential}
We first illustrate an example to show that the objective function $\phi,$ defined by \eqref{phi} may not be differentiable. Also, from Example \ref{GOWAbestexample}, we see that $\phi$ may not be a convex function. Additionally, at the point where $\phi$ is not differentiable, we develop the concept of Clarke subdifferentials \cite{Clarke}.

\begin{example}\label{GOWA_nondifferentiable} 
Let the uncertainty set be $\Omega = \{\zeta_1,\zeta_2\}$ and the feasible set be $\mathcal{X}=[4,6].$ Define $f:\mathcal{X}\times \Omega \to \mathbb{R}$ as
$$f(x,\zeta_1)=e^{x-5}~~~\text{ and }~~~f(x,\zeta_2)=e^{-(x-5)},$$
and take $w_1=w_2=0.5,~\lambda = 1.$ Then,
\begin{equation*}
\phi(x) =\left\{
        \begin{array}{ll}
            \frac{1}{2}e^{-(x-5)}, & \quad ~~~~\text{ if }~~~ x \in [4, 5]\\ 
            \frac{1}{2}e^{x-5}, & \quad ~~~~\text{ if }~~~  x \in [5, 6].
        \end{array}
    \right.
\end{equation*}
Clearly, $\phi(x)$ is not differentiable at $x=5.$
\end{example}

For a given $\bar{x},$ it may be possible that there exists $k_1, k_2, \ldots, k_q \in \{1,2,\ldots,p\}$ such that $f(\bar{x}, \zeta_{k_1})=f(\bar{x},\zeta_{k_2})=\cdots=f(\bar{x},\zeta_{k_q}).$ The following lemma gives the maximum function in the neighborhood of $\bar{x}$, which is helpful in determining the subdifferential of the objective function $\phi$ at $\bar{x}$.
\begin{lemma}\label{lemma_1}
     Consider $\mathcal{P}(\Omega)$ with each $f(x,\zeta_i)$ is continuous on $\mathcal{X},~i=1,2,\ldots,p.$
      Let $\ell$ be the least index in \{1,2,\ldots,p\} for which
     \[
     s_\ell(\bar{x})= f(\bar{x}, \zeta_{k_1})=f(\bar{x},\zeta_{k_2})=\cdots=f(\bar{x},\zeta_{k_q}).
     \]
Then, there exists a $\delta >0$ such that 
\begin{equation}\label{sl_x_in_delta}
s_\ell(x)=\max~\{f(x,\zeta_{k_1}),f(x, \zeta_{k_2}),\ldots,f(x,\zeta_{k_q})\}, \text{ for all } x \in N_{\delta}(\bar{x}),
\end{equation}
where $N_{\delta}(\bar{x})= \{x : \|\bar{x}-x\| < \delta\}$
\end{lemma}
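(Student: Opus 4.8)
The plan is to exploit the observation that the auxiliary functions $s_1(x) \ge s_2(x) \ge \cdots \ge s_p(x)$ are nothing more than the values $f(x,\zeta_1),\ldots,f(x,\zeta_p)$ listed in non-increasing order (counting multiplicity); the tie-breaking rule \eqref{lowest_index} only fixes which scenario index is attached to each $s_i$, not the numerical value $s_i(x)$. First I would partition $\{1,2,\ldots,p\}$ according to the common value $c := s_\ell(\bar x)$: let $A=\{k_1,\ldots,k_q\}$ be the scenarios attaining $c$ at $\bar x$, let $B$ consist of those with $f(\bar x,\zeta_i) > c$, and let $C$ consist of those with $f(\bar x,\zeta_i) < c$. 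Because $\ell$ is the \emph{least} index with $s_\ell(\bar x)=c$ and the $s_i$ are non-increasing, we have $s_1(\bar x)\ge\cdots\ge s_{\ell-1}(\bar x) > c$, so $B$ has exactly $\ell-1$ elements.

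The key step is a continuity-and-gap argument that preserves this three-tier ordering on a whole neighborhood of $\bar x$. I would set $\alpha = \min_{i\in B} f(\bar x,\zeta_i)-c$ and $\beta = c - \max_{i\in C} f(\bar x,\zeta_i)$ (interpreting the relevant bound as $+\infty$ when $B$ or $C$ is empty), both strictly positive. Then I fix $\gamma$ with $0 < \gamma < \tfrac12\min\{\alpha,\beta\}$ and, using continuity of each $f(\cdot,\zeta_i)$ at $\bar x$, choose $\delta>0$ so that $|f(x,\zeta_i)-f(\bar x,\zeta_i)| < \gamma$ for all $x \in N_\delta(\bar x)$ and all $i$. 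A one-line estimate then shows that for every such $x$ each $B$-value exceeds $c+\gamma$, each $A$-value lies in $(c-\gamma,c+\gamma)$, and each $C$-value is below $c-\gamma$; hence $f(x,\zeta_i) > f(x,\zeta_j)$ whenever $i\in B,\ j\in A\cup C$, and $f(x,\zeta_j) > f(x,\zeta_{j'})$ whenever $j\in A,\ j'\in C$.

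I would then read off the conclusion directly from this ordering. For $x\in N_\delta(\bar x)$ the $\ell-1$ largest of the $p$ values are exactly those indexed by $B$, so $s_1(x),\ldots,s_{\ell-1}(x)$ are precisely the values $\{f(x,\zeta_i): i\in B\}$, and therefore $s_\ell(x)$ is the largest value among the remaining scenarios $A\cup C$. Since every $A$-value dominates every $C$-value on $N_\delta(\bar x)$, this maximum is attained on $A$, which yields $s_\ell(x) = \max\{f(x,\zeta_{k_1}),\ldots,f(x,\zeta_{k_q})\}$ for all $x\in N_\delta(\bar x)$, i.e.\ exactly \eqref{sl_x_in_delta}.

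I expect the main obstacle to be purely bookkeeping rather than conceptual: making watertight the claim that $s_\ell(x)$ is genuinely the $\ell$-th order statistic of the $p$ values (so that it is insensitive to the tie-breaking convention used to define the $s_i$), and handling the degenerate cases $\ell=1$ (so $B=\emptyset$) and $\ell+q-1=p$ (so $C=\emptyset$) so that the gap constants $\alpha,\beta$ do not become vacuous.
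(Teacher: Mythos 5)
Your proposal is correct, but it proves the lemma by a genuinely different route than the paper. The paper argues by contradiction with a sequential compactness flavour: assuming no $\delta$ works, it picks $x_n \in N_{1/n}(\bar{x})$ with $s_\ell(x_n) = f(x_n,\zeta_{j(x_n)})$ for indices $j(x_n)$ outside $\{k_1,\ldots,k_q\}$, and then (implicitly passing to a subsequence along which the finitely many indices $j(x_n)$ stabilize) uses continuity to conclude $s_\ell(\bar{x}) = f(\bar{x},\zeta_{j})$ for some $j \notin \{k_1,\ldots,k_q\}$, contradicting the assumption that the $k_i$ exhaust the scenarios attaining the value $s_\ell(\bar{x})$. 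You instead give a direct, quantitative argument: the observation that $s_i(x)$ is the $i$-th order statistic of the values $f(x,\zeta_1),\ldots,f(x,\zeta_p)$ (the tie-breaking rule \eqref{lowest_index} affecting only index labels, not values), combined with the three-tier separation $B/A/C$ and a gap constant $\gamma < \tfrac12\min\{\alpha,\beta\}$ that continuity preserves on an explicit neighborhood. What your approach buys: an explicit $\delta$, no need to negate the statement (the paper's negation step is actually slightly loose, since $s_\ell(x_\delta)$ failing to equal the max over the tie set does not literally force its attaining index outside that set, whereas your argument never faces this issue), and a stronger conclusion --- the persistence of the full ordering, with $s_1,\ldots,s_{\ell-1}$ enumerating exactly the $B$-values on $N_\delta(\bar{x})$ --- which is precisely what the paper needs again in Remark \ref{repsentation_of_s_i+m} and Theorem \ref{sub_i+m}. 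What the paper's approach buys is brevity: it avoids the order-statistic bookkeeping that you correctly identify as the main thing to make watertight (and which does hold under the convention \eqref{lowest_index}). Both proofs rely on reading the hypothesis as saying that $k_1,\ldots,k_q$ list \emph{all} scenarios attaining the common value at $\bar{x}$.
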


\noindent

\begin{proof}
    On the contrary, suppose that there does not exist any  $\delta>0$ such that \eqref{sl_x_in_delta} holds. Therefore, corresponding to any chosen $\delta>0$, there exists an  $x_\delta$ in $N_\delta(\bar x)$ such that $s_\ell(x_\delta)=f(x_\delta,\zeta_{j(x_\delta)}) \neq \max\{f(x_\delta,k_1),f(x_\delta,k_2),$ $\ldots,f(x_\delta,k_q)\}$. Now, we consider a sequence of neighbourhoods at $\bar{x}:$  $N_{\delta_n}(\bar{x})=\{x : \|\bar{x}-x\| < \delta_n\}$ with $\delta_n=\tfrac{1}{n}$, $n\in\mathbb{N}$. Then, corresponding to each $\delta_n,$ we get $x_n \in N_{\delta_n}(\bar{x})$ such that
    \[
    s_\ell(x_n)=f(x_n,\zeta_{j(x_n)}),~~j(x_n) \in \{\{1,2,\ldots,p\}\setminus\{k_1,k_2,\ldots,k_q\}\},~\text{ for all } n\in\mathbb{N}.
    \]
    As $n \to \infty,
    s_\ell(x_n) \to s_\ell(\bar{x})$ and $f(x_n, \zeta_{j(x_n)}) \to f(\bar{x}, \zeta_{j(\bar{x})})$ for some \\ $j(\bar{x}) \in \{\{1,2,\ldots,p\}\setminus\{k_1,k_2,\ldots,k_q\}\}, $
    we get $s_\ell(\bar{x})=f(\bar{x},\zeta_{j(\bar{x})})$ for some $j(\bar{x}) \in \{\{1,2,\ldots,p\}\setminus\{k_1,k_2,\ldots,k_q\}\}.$
    This is a contradiction.
\end{proof}

\begin{remark}\label{repsentation_of_s_i+m}
   In Lemma \ref{lemma_1}, let us consider an index set $\mathcal{J}_{\ell}(x)=\{j \in \{1,2,\ldots,q\}: f(x,\zeta_{k_j})=s_{\ell+t}(x),~ 0 \leq t \leq q-\ell,\text{ for all } x \in N_\delta(\bar{x})\}.$ If $f(x,\zeta_{k_{j(x)}})=s_{\ell+m-1}(x) \text{ for all } x \in N_{\delta}(\bar{x}),$ then the function $s_{\ell+m},~ 0\leq m \leq q-\ell,$ can be represented by
   \[
   s_{\ell+m}(x)= \max~\{f(x,\zeta_{k_j}) : j \in \mathcal{J}_{\ell+m}'(x)\},
   \]
   where $\mathcal{J}'_{\ell+m}(x)=\mathcal{J}_{\ell}(x)\setminus\{j(x)\}~~\text{for all}~~x \in N_{\delta}(\bar{x}).$
\end{remark}

\noindent
In the following example, we exemplify Lemma \ref{lemma_1}.
\begin{example}\label{example_to_find_subdifferential}
Let the uncertainty set be $\Omega=\{\zeta_1, \zeta_2, \zeta_3, \zeta_4, \zeta_5\}$ and the feasible set be $\mathcal{X}=[2,6]$.
We define $f:\mathcal{X}\times\Omega \to \mathbb{R}$
by $f(x,\zeta_1)=x, f(x,\zeta_2)=|\tfrac{x-4}{3}|+4, f(x,\zeta_3)=-|\tfrac{x-4}{2}|+4, f(x,\zeta_4)=(x-4)^2+1$ and $f(x, \zeta_5)=-\tfrac{x-4}{3}+5.$ Graphs of $f(x,\zeta_1), f(x,\zeta_2), f(x,\zeta_3), f(x,\zeta_4)$ and $f(x,\zeta_5)$ are drawn in Figure \ref{example_to_find_sub_a}. Here
\allowdisplaybreaks
\begin{align*}
    s_1(x)
    & = \begin{dcases}
        -\tfrac{x-4}{3}+5, & ~\text{if}~~ x \in [2,4.750] \\
        x, & ~\text{if}~~ x \in  [4.750,6],
    \end{dcases}\\
    s_2(x) &= \begin{dcases}
        (x-4)^2+1, & ~\text{if}~~ x \in [2,2.093]\cup [5.907,6]\\
        |\tfrac{x-4}{3}|+4, & ~\text{if}~~ x \in  [2.093,4]\cup [5.500,5.907]\\
        x, & ~\text{if}~~ x \in  [4,4.75]\\
        -\tfrac{x-4}{3}+5, & ~\text{if}~~ x \in  [4.75,5.5],
    \end{dcases}\\
        s_3(x)
    & = \begin{dcases}
        |\tfrac{x-4}{3}|+4, & ~\text{if}~~ x \in [2,2.093]\cup[4,5.5]\cup[5.907,6]\\
        (x-4)^2+1, & ~\text{if}~~ x \in  [2.093,2.5]\cup[5.84,5.907]\\
        -|\tfrac{x-4}{2}|+4, & ~\text{if}~~ x \in  [2.5,4]\\
        -\tfrac{x-4}{3}+5, & ~\text{if}~~ x \in  [5.5,5.84]\cup[-0.459,0.910]\cup [1.405,1.781],
    \end{dcases}\\
     s_4(x)&= \begin{dcases}
        -|\tfrac{x-4}{2}|+4, & ~\text{if}~~ x \in [2,2.5]\cup[4,5.5]\\
        (x-4)^2+1, & ~\text{if}~~ x \in  [2.5,2.697]\cup[5.5,5.84]\\
        x, & ~\text{if}~~ x \in  [2.697,4]\\
        -\tfrac{x-4}{3}+5, & ~\text{if}~~ x \in  [5.84,6]
    \end{dcases}\\
    \text{and}~~~s_5(x)&= \begin{dcases}
        x, & ~\text{if}~~ x \in [2,2.697]\\
        (x-4)^2+1, & ~\text{if}~~ x \in  [2.697,5.5]\\
        -|\tfrac{x-4}{2}|+4, & ~\text{if}~~ x \in  [5.5,6].
    \end{dcases}
\end{align*}
Graphs of $s_1(x),s_2(x),s_3(x),s_4(x)$ and $s_5(x)$ are drawn in Figure \ref{example_to_find_sub_b}.
At $\bar{x}=4,$ we have $f(4,\zeta_1)=f(4,\zeta_2)=f(4,\zeta_3).$ Here $k_1=1, k_2=2,\text{ and } k_3=3.$ Also, $s_2(4)=s_3(4)=s_4(4)=f(x,\zeta_1)=f(x,\zeta_2)=f(x,\zeta_3).$ Therefore, $\ell= \min \{2,3,4\}=2.$ We choose $\delta = \tfrac{1}{2}.$ Then, from Lemma \ref{lemma_1}, we have
\[
s_2(x)=\max~\{f(x,\zeta_1),f(x,\zeta_2),f(x,\zeta_3),~ \text{ for all } x \in [3.5, 4.5]\}.
\]
Notice that $\mathcal{J}_2(x)=\{1,2,3\}$ for all $x \in [3.5, 4.5].$ For $m=1,$ we have
\[\mathcal{J}'_3(x)=\begin{dcases}
        \{1,3\}, & ~\text{if}~~ x \in [3.5,4]\\
        \{2,3\}, & ~\text{if}~~ x \in  [4,4.5].
        \end{dcases}\]
Therefore, from Remark \ref{repsentation_of_s_i+m}, we have
 \begin{align*}
        s_3(x)&= \max\{f(x,\zeta_{k_j}) : j \in \mathcal{J}'_{3}(x)\},\\
        &= \max \left\{f(x, \zeta_{k_j}): j \in \left.\begin{dcases}
        \{1,3\}, & ~\text{if}~~ x \in [3.5,4]\\
        \{2,3\}, & ~\text{if}~~ x \in  [4,4.5],
    \end{dcases}\right\}\right\}\\
       &= \begin{dcases}
        \max\begin{dcases}
         -|\tfrac{x-4}{2}|+4, &~\text{if}~ x \in [3.5,4] \\
        x, &~\text{if}~ x \in  [3.5,4]
    \end{dcases}\\
        \max\begin{dcases}
        -|\tfrac{x-4}{2}|+4, &~\text{if}~ x \in [4,4.5] \\
        |\tfrac{x-4}{3}|+4, &~\text{if}~ x \in  [4,4.5] 
    \end{dcases}
    \end{dcases}\\
    &=\begin{dcases}
        -|\tfrac{x-4}{2}|+4, &~\text{if}~ x \in [3.5,4] \\
        |\tfrac{x-4}{3}|+4, &~\text{if}~ x \in  [4,4.5]. 
    \end{dcases}
 \end{align*}
 For $m=2,$ we have $\mathcal{J}'_4(x)=\begin{cases}
        \{1\}, &~\text{if}~ x \in [3.5,4] \\
        \{3\}, &~\text{if}~ x \in  [4,4.5].
    \end{cases}$ 
Therefore,
$ s_4(x)=\begin{cases}
        x, &~\text{if}~ x \in [3.5,4] \\
        -|\tfrac{x-4}{2}|+4, &~\text{if}~ x \in  [4,4.5]. 
    \end{cases}
$ 
\begin{figure}[H]
    \centering
    \subfloat[The objective functions for different scenarios\label{example_to_find_sub_a}]{\includegraphics[scale=0.4]{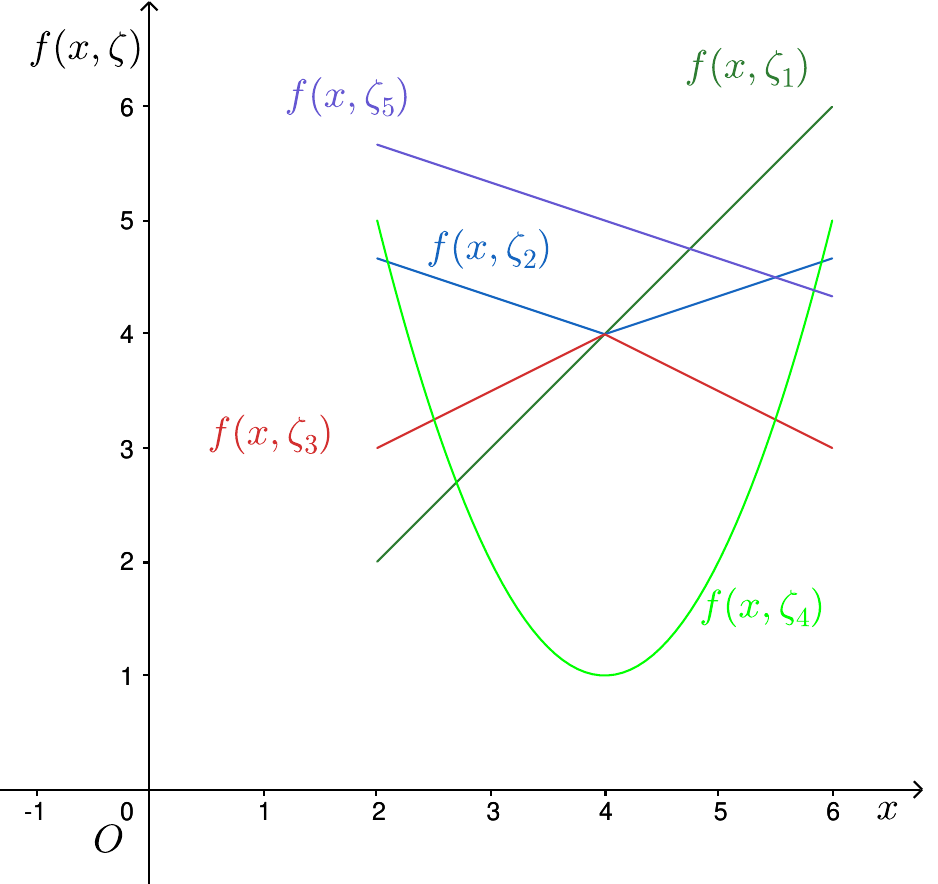}}
    \qquad \qquad 
    \subfloat[The function $s_1, s_2, s_3, s_4$ and $s_5$\label{example_to_find_sub_b}]{\includegraphics[scale=0.4]{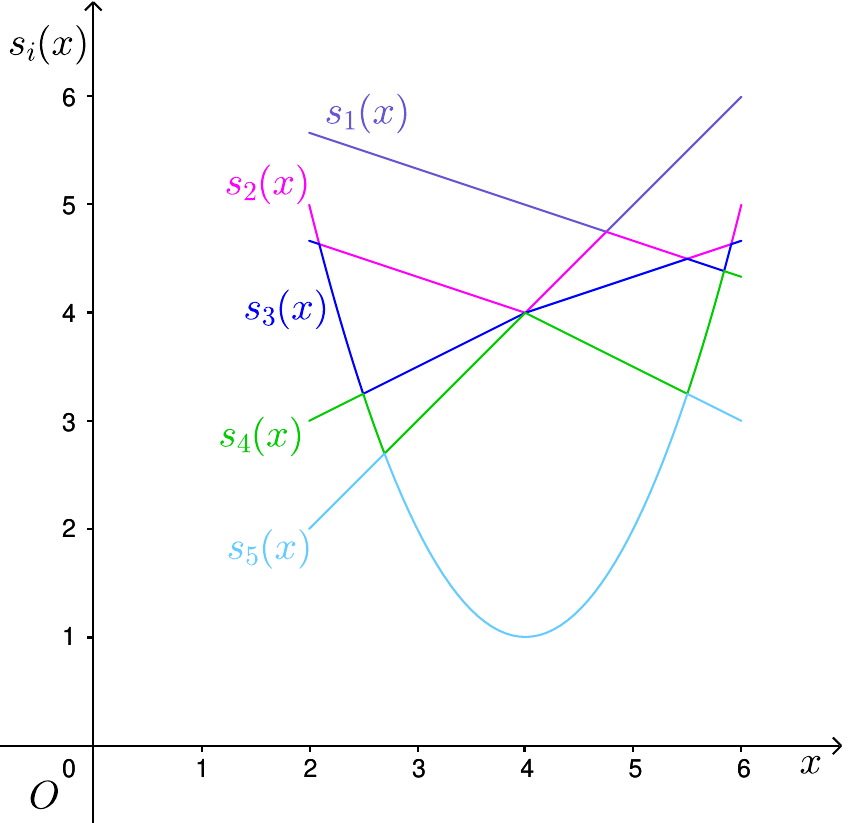}}
    \caption{Functions involved in GOWA robust counterpart for Example \ref{example_to_find_subdifferential}}
\end{figure}
\end{example}

\begin{theorem}\emph{\cite{Bagirov}}\label{sub_of_max_function} Let $g_j : \mathbb{R}^n \to \mathbb{R}$ be locally Lipschitz continuous functions at $\bar{x}\in\mathbb{R}^{n}$ for all $j=1,2,\ldots,m$. Then, the function
$$g(x)= \max~\{g_j(x) : j=1,2,\ldots,m\}$$
is locally Lipschitz continuous at $\bar{x}$ and 
\begin{align}\label{max}
\partial g(\bar{x}) \subseteq \text{conv}\{\partial g_j(\bar{x}) : j \in \mathcal{J}(\bar{x})\},
\end{align}
where $\mathcal{J}(\bar{x})=\{j \in \{1,2,\ldots,m\} : g_j(\bar{x})=g(\bar{x})\}.$
Moreover, if $g_j$ are subdifferentially regular at $\bar{x}$ for all $j= 1,2,\ldots,m,$ then $g$ is also subdifferentially regular at $\bar{x}$ and equality holds in \emph{(\ref{max})}.
\end{theorem}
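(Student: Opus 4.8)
The plan is to reduce the whole statement to two comparisons of directional derivatives, using the fact that for any locally Lipschitz $h$ the map $d \mapsto h^\circ(\bar{x};d)$ is the support function of the (nonempty, compact, convex) set $\partial h(\bar{x})$, i.e.\ $h^\circ(\bar{x};d)=\max\{\eta^\top d : \eta \in \partial h(\bar{x})\}$. Because a compact convex set is determined by its support function, and because the support function of $\text{conv}\{\partial g_j(\bar{x}) : j \in \mathcal{J}(\bar{x})\}$ equals $\max_{j\in\mathcal{J}(\bar{x})} g_j^\circ(\bar{x};d)$ (support function of a convex hull of a union is the pointwise maximum of the individual support functions), the inclusion \eqref{max} is \emph{equivalent} to the pointwise estimate
\[
g^\circ(\bar{x};d) \le \max_{j \in \mathcal{J}(\bar{x})} g_j^\circ(\bar{x};d) \quad \text{for all } d \in \mathbb{R}^n.
\]
Thus the proof is entirely a matter of bounding the Clarke directional derivative of $g$.

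First I would clear the two routine reductions. Local Lipschitz continuity of $g$ follows from the elementary inequality $|\max_j a_j - \max_j b_j| \le \max_j |a_j - b_j|$ applied to $a_j=g_j(x),\, b_j=g_j(y)$, so $g$ is Lipschitz with constant $\max_j \Gamma_j$ on a common neighbourhood of $\bar{x}$. Next, by continuity of each $g_j$, every inactive index (one with $g_j(\bar{x}) < g(\bar{x})$) stays strictly below $g$ on some $N_\delta(\bar{x})$; hence $g(x)=\max\{g_j(x): j\in\mathcal{J}(\bar{x})\}$ there, and I may assume throughout that every index is active.

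The core step is the directional-derivative estimate. I would choose sequences $x_n \to \bar{x}$ and $t_n \downarrow 0$ realizing the $\limsup$ that defines $g^\circ(\bar{x};d)$, and for each $n$ pick an index $j_n$ with $g(x_n+t_n d)=g_{j_n}(x_n+t_n d)$. By the pigeonhole principle I pass to a subsequence on which $j_n \equiv j$ is constant. Using $g(x_n) \ge g_j(x_n)$ gives
\[
\frac{g(x_n+t_n d)-g(x_n)}{t_n} \le \frac{g_j(x_n+t_n d)-g_j(x_n)}{t_n},
\]
and passing to the $\limsup$ bounds $g^\circ(\bar{x};d)$ by $g_j^\circ(\bar{x};d)$. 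Continuity forces $g_j(\bar{x})=g(\bar{x})$, so $j \in \mathcal{J}(\bar{x})$, which yields the displayed inequality and therefore the inclusion \eqref{max}.

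For the regular case I would first establish that the ordinary directional derivative exists with $g'(\bar{x};d)=\max_{j\in\mathcal{J}(\bar{x})} g_j'(\bar{x};d)$: the bound ``$\ge$'' comes from $g(\bar{x}+td)\ge g_j(\bar{x}+td)$ with equality at $t=0$ for each active $j$, and ``$\le$'' from the active-index representation above. Invoking regularity of each $g_j$ (so $g_j'=g_j^\circ$) gives $g'(\bar{x};d)=\max_{j\in\mathcal{J}(\bar{x})} g_j^\circ(\bar{x};d)\ge g^\circ(\bar{x};d)$; combined with the always-valid $g'(\bar{x};d)\le g^\circ(\bar{x};d)$ this forces $g'=g^\circ$, i.e.\ subdifferential regularity of $g$, and turns the support-function inequality into an equality, so \eqref{max} becomes an equality. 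I expect the main obstacle to be precisely the core step: the maximizing index may genuinely vary along the sequence defining the Clarke derivative, so the pigeonhole passage to a constant, provably \emph{active} index is what makes the comparison valid; everything else is bookkeeping with support functions and the two easy reductions.
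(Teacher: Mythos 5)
Your proposal is mathematically correct, but there is nothing in the paper to compare it against: Theorem \ref{sub_of_max_function} is stated with the citation \cite{Bagirov} and used as a black box, so the paper contains no proof of this statement. Judged on its own, your argument is sound and complete in all essentials: local Lipschitz continuity of $g$ via $|\max_j a_j-\max_j b_j|\le\max_j|a_j-b_j|$; the reduction of the inclusion \eqref{max} to the support-function inequality $g^\circ(\bar{x};d)\le\max_{j\in\mathcal{J}(\bar{x})}g_j^\circ(\bar{x};d)$, which is legitimate because $\partial g(\bar{x})$ and $\text{conv}\{\partial g_j(\bar{x}):j\in\mathcal{J}(\bar{x})\}$ are compact convex sets and the support function of the convex hull of a finite union is the pointwise maximum of the individual support functions; the pigeonhole extraction of a constant, provably active index along a sequence realizing the $\limsup$, which is indeed the crux; and the upgrade to regularity and to equality in \eqref{max} via $g'(\bar{x};d)=\max_{j\in\mathcal{J}(\bar{x})}g_j'(\bar{x};d)$. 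This is essentially the classical proof of the pointwise-maximum rule in Clarke \cite{Clarke}. It is worth noting that when the paper proves its own max-type results (Theorems \ref{sub_s_i} and \ref{sub_i+m}), it takes a genuinely different route: there the maximum is written as a composition $(g_\ell\circ h)(x)$ of the convex coordinate-maximum $g_\ell(y)=\max_l y_l$ with $h(x)=(f(x,\zeta_{k_1}),\ldots,f(x,\zeta_{k_q}))$, the subdifferential $\partial g_\ell(h(\bar{x}))$ is computed explicitly, and a chain rule (Theorem 3.19 of \cite{Bagirov}) is invoked. Your direct support-function argument avoids any chain rule and is self-contained, making transparent exactly where the active-index set and the regularity hypothesis enter, whereas the composition route buys shorter bookkeeping once the chain-rule machinery is taken for granted.
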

\begin{theorem}\label{sub_s_i}
For a given uncertain single objective optimization problem $\mathcal{P}(\Omega),$ suppose for a given $\bar{x}$ there exists $k_1, k_2, \ldots, k_q \in \{1,2,\ldots,p\}$ such that $f(\bar{x}, \zeta_{k_1})=f(\bar{x},\zeta_{k_2})=\cdots=f(\bar{x},\zeta_{k_q}).$ Let $\ell$ be the least index in $\{1,2,\ldots,p\}$ for which
     \[
     s_\ell(\bar{x})= f(\bar{x}, \zeta_{k_1})=f(\bar{x},\zeta_{k_2})=\cdots=f(\bar{x},\zeta_{k_q}).
     \] 
Then, the subdifferential of the function $s_\ell(x)$ at $\bar{x}$ and 
\begin{align}\label{s_i(x)sub}
    \partial s_\ell(\bar{x}) \subseteq \text{conv}\{\partial f(\bar{x},\zeta_{k_j}) : j \in \mathcal{J}_\ell(\bar{x})\},
\end{align}
where $\mathcal{J}_\ell(\bar{x})= \{j \in \{1,2,\ldots,q\} : f(\bar{x},\zeta_{k_j})=s_\ell(\bar{x})\}.$
Moreover, if each $f(x,\zeta_{k_j}), j=1,2,\ldots,q$ is subdifferentially regular at $\bar{x},$ then $s_\ell(x)$ is also subdifferentially regular at $\bar{x}$ and equality holds in \emph{(\ref{s_i(x)sub})}.
\end{theorem}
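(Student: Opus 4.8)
The plan is to reduce the inclusion \eqref{s_i(x)sub} to the subdifferential calculus rule for a finite maximum of locally Lipschitz functions, i.e.\ Theorem \ref{sub_of_max_function}, once the maximum defining $s_\ell$ has been trimmed to only the functions active at $\bar x$. As a standing assumption I take each $f(\cdot,\zeta_i)$ to be locally Lipschitz at $\bar x$, which by Theorem \ref{Lipschitzcontinuity} guarantees that $s_\ell$ is locally Lipschitz and that $\partial s_\ell(\bar x)$ is well defined. The first step is to apply Lemma \ref{lemma_1}: since $\ell$ is the least index with $s_\ell(\bar x)=f(\bar x,\zeta_{k_1})=\cdots=f(\bar x,\zeta_{k_q})$, there is a $\delta>0$ for which
$$s_\ell(x)=\max\{f(x,\zeta_{k_1}),f(x,\zeta_{k_2}),\ldots,f(x,\zeta_{k_q})\}\quad\text{for all }x\in N_\delta(\bar x).$$
Writing $g(x):=\max\{f(x,\zeta_{k_j}):j=1,2,\ldots,q\}$, this says that $s_\ell$ and $g$ agree on the whole neighborhood $N_\delta(\bar x)$.

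Next I would use the fact that the Clarke subdifferential is a local object: the generalized directional derivative $f^\circ(\bar x;d)$ is defined through a $\limsup$ as $x\to\bar x$ and $t\downarrow 0$, so it---and hence $\partial(\cdot)(\bar x)$---is determined by the values of the function on any neighborhood of $\bar x$. Because $s_\ell$ and $g$ coincide on $N_\delta(\bar x)$, this gives $\partial s_\ell(\bar x)=\partial g(\bar x)$. I would then apply Theorem \ref{sub_of_max_function} with $g_j=f(\cdot,\zeta_{k_j})$ and $m=q$, obtaining $\partial g(\bar x)\subseteq\text{conv}\{\partial f(\bar x,\zeta_{k_j}):j\in\mathcal{J}(\bar x)\}$ with $\mathcal{J}(\bar x)=\{j:f(\bar x,\zeta_{k_j})=g(\bar x)\}$. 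Since $g(\bar x)=s_\ell(\bar x)$, the set $\mathcal{J}(\bar x)$ is precisely $\mathcal{J}_\ell(\bar x)$, which yields \eqref{s_i(x)sub}.

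For the final (``moreover'') assertion, if each $f(\cdot,\zeta_{k_j})$ is subdifferentially regular at $\bar x$, then the regularity part of Theorem \ref{sub_of_max_function} makes $g$ subdifferentially regular at $\bar x$ with equality in the inclusion; transferring this through the local identity $s_\ell=g$ on $N_\delta(\bar x)$ shows $s_\ell$ is regular at $\bar x$ and that equality holds in \eqref{s_i(x)sub}. The only point requiring care is the localization step: I must argue explicitly that the Clarke subdifferential depends only on local behavior, so that the neighborhood-only identity furnished by Lemma \ref{lemma_1} suffices. Beyond this, the argument is a direct substitution into Theorem \ref{sub_of_max_function}, and I expect no further obstacle.
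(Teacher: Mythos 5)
Your proof is correct, but it takes a genuinely different route from the paper's. The paper does not invoke Theorem \ref{sub_of_max_function} at all in its proof of Theorem \ref{sub_s_i}: after the same appeal to Lemma \ref{lemma_1}, it writes $s_\ell = g_\ell \circ h$ with $g_\ell(y)=\max_l y_l$ and $h(x)=(f(x,\zeta_{k_1}),\ldots,f(x,\zeta_{k_q}))$, proves from scratch that $g_\ell$ is convex, computes $g_\ell'(y;d)$ and the explicit simplex form of $\partial g_\ell(h(\bar{x}))$ (weights $\beta_j\ge 0$, $\sum_j\beta_j=1$, $\beta_j=0$ off the active set), and then applies Bagirov's chain rule (Theorems 3.19 and 3.20 there) to obtain the inclusion, the equality, and the regularity. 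Your proof short-circuits all of this by applying the finite-max rule (Theorem \ref{sub_of_max_function}), which the paper has already stated, to $g=\max_j f(\cdot,\zeta_{k_j})$, and then transferring the conclusion to $s_\ell$ via the observation that the Clarke generalized directional derivative, the ordinary directional derivative, and hence both $\partial(\cdot)(\bar x)$ and subdifferential regularity, depend only on the function's values near $\bar x$. Two remarks on what each approach buys. First, your localization step is not merely a convenience: the paper needs exactly the same step, since its identity $s_\ell=g_\ell\circ h$ also holds only on $N_\delta(\bar x)$, yet it applies the chain rule without comment; you are more careful on this point. Second, the paper's heavier composition machinery is not wasted there: the explicit computation of $\partial g_\ell$ and the chain-rule template are reused almost verbatim in the proof of Theorem \ref{sub_i+m}, where the inner function $g_{\ell+m}$ is no longer a plain coordinate maximum and your direct appeal to Theorem \ref{sub_of_max_function} would not apply as stated (the active representation of $s_{\ell+m}$ uses an index set that varies with $x$). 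For the present theorem, your argument is complete and more economical; the only hypotheses you add (local Lipschitz continuity of each $f(\cdot,\zeta_i)$, continuity for Lemma \ref{lemma_1}) are ones the paper also uses implicitly, since the Clarke subdifferential is undefined without them.
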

\begin{proof}
To find the subdiffrential of the function $s_l(x)$ at $\bar{x},$ from Lemma \ref{lemma_1}, there exists a $\delta$-neighbourhood $N_{\delta}(\bar{x})$ at $\bar{x}$ such that the function $s_l(x)$ becomes
 \[
 s_\ell(x)=\max~\{f(x,\zeta_{k_1}),f(x,\zeta_{k_2}),\ldots,f(x,\zeta_{k_q})\} \text{ for all } x \in N_{\delta}(\bar{x}),
 \]
where $N_{\delta}(\bar{x}) =\{x : \|\bar{x}-x\| < \delta\}.$\\ 
Consider the functions $g_\ell : \mathbb{R}^q \to \mathbb{R}$ and $h : \mathcal{X} \to \mathbb{R}^q$ by 
\begin{align*}
    &g_\ell(y)=\underset{l=1,2,\dots,q}{\max}\{y_l\}~~\text{ and }~~
    h(x)=(f(x,\zeta_{k_1}), f(x,\zeta_{k_2}),\dots,f(x,\zeta_{k_q})).
\end{align*}
Now we have $s_\ell(x)=(g_\ell \circ h)(x)$ in $N_{\delta}(\bar{x}).$ For all $y,y' \in \mathbb{R}^q$ and $\mu \in [0,1],$
\begin{align*}
    g_\ell(\mu y+(1-\mu)y')&= \underset{l=1,2,\ldots,q}{\max}\{\mu y_l +(1-\mu)y'_j\}\\
    & \leq \mu \underset{l=1,2,\ldots,q}{\max}\{y_l\}+(1-\mu) \underset{l=1,2,\ldots,q}{\max}\{y'_l\}\\
    & = \mu g_\ell(y)+(1-\mu) g_\ell(y'),
\end{align*}

\noindent
which means that $g_\ell(y)$ is convex and by Theorem $2.22$ (i) in {\cite{Bagirov}} locally Lipschitz continuous at $h(\bar{x}).$

\noindent
Let $\mathcal{L}_\ell(y)=\{l \in \{1,2,\ldots,q\} : y_l=g_\ell(y)\}.$
Now, the directional derivative of $g_\ell(y)$ along the direction $d \in \mathbb{R}^q$ is

\noindent
\begin{align*}
    g'_\ell(y;d) &= \underset{t \to 0}{\lim}~\frac{g_\ell(y+td)-g_\ell(y)}{t}\\
    &= \underset{t \to 0}{\lim}~\underset{l=1,2,\ldots,q}{\max}~ \frac{\{y_l+td_l\}-g_\ell(y)}{t}\\
    &= \underset{t \to 0}{\lim}~\underset{l \in \mathcal{L}_\ell(y)}{\max}~\frac{\{y_l+td_l\}-g_\ell(y)}{t}\\
    &= \underset{t \to 0}{\lim}~\underset{l \in \mathcal{L}_\ell(y)}{\max}~\frac{\{y_l+td_l-y_l\}}{t}.
\end{align*}
Thus, $g'_\ell(y;d)= \underset{l \in \mathcal{L}_\ell(y)}{\max} d_l$ and by Theorem $3.8$ (i) in  {\cite{Bagirov}}, we have $g^{\circ}_\ell=g'_\ell,$ which gives
$$\partial g_\ell(y)=\{\beta \in \mathbb{R}^q : \underset{l \in \mathcal{L}_\ell(y)}{\max}~ d_l \geq \beta^{\top}d \text{ for all } d \in \mathbb{R}^q\}.$$
This implies that any $\beta = (\beta_1,\beta_2,\ldots,\beta_q)$ in $\partial g_\ell (y)$ satisfies $\beta_l \geq 0, l=1,2,\ldots,q, \beta_l=0, \text{ if } l \notin \mathcal{L}_{\ell} (y),$ and $\sum_{l=1}^q \beta_{l}=1,$ so we can calculate the Clarke subdifferential of $g_\ell(y)$ at $h(\bar{x}) \in \mathbb{R}^q$ by 
\begin{align*}
\partial g_\ell(h(\bar{x}))&= \left\{\beta \in \mathbb{R}^q : \beta_l \geq 0, ~~\sum_{l=1}^{q}\beta_l=1~~\text{and}~~\beta_l=0 ~\text{if}~l \notin \mathcal{L}_\ell(h(\bar{x}))\right\}\\
\iff \partial g_\ell(h(\bar{x}))&= \left\{\beta \in \mathbb{R}^q : \beta_j \geq 0, ~~\sum_{j=1}^{q}\beta_j=1~~\text{and}~~\beta_j=0 ~\text{if}~j \notin \mathcal{J}_\ell(\bar{x})\right\}.
\end{align*}
By Theorem $3.19$ in {\cite{Bagirov}} on $s_\ell(\bar{x}),$ we get
\begin{align*}
    \partial s_\ell(\bar{x}) &\subseteq \text{conv} \left\{\sum_{j=1}^{q}\beta_j \zeta_j : \zeta_j \in \partial h_j(\bar{x})~~\text{and}~~\beta \in \partial g_\ell(h(\bar{x}))\right\}\\
   & =\text{conv}\left\{\underset{j \in \mathcal{J}_\ell(\bar{x})}{\sum}\beta_j \partial f(\bar{x},\zeta_{k_j}): \beta_j \geq 0 ~~\text{and}~~\underset{j \in \mathcal{J}_\ell(\bar{x})}{\sum}\beta_j =1\right\}\\
   & =\text{conv}\left\{\partial f(\bar{x}, \zeta_{k_j}): j \in \mathcal{J}_\ell(\bar{x})\right\}.
\end{align*}

\noindent
Moreover, we suppose that each $f(x,\zeta_{k_j}),  j=1,2,\ldots,q$ is subdifferentially regular at $\bar{x}.$ As $g_\ell(y)$ is convex, so by Theorem  $3.13$ (ii) in {\cite{Bagirov}}, it is subdifferentially regular at $h(\bar{x}).$ Hence, Theorem $3.20$ (i) in \cite{Bagirov} implies that $s_i(x)$ are subdifferentially regular at $\bar{x}$ and equality holds in \eqref{s_i(x)sub}.
\end{proof}

Let the function $s_i$ be as given in {(\ref{s_1})} and {(\ref{s_i})}. Suppose the value of $s_i$ at $\bar{x}$ is
    $
     s_i(\bar{x})= f(\bar{x}, \zeta_{k_1})=f(\bar{x},\zeta_{k_2})=\cdots=f(\bar{x},\zeta_{k_q}), \text{ for least } i =1,2,\ldots,p.
     $
In the next theorem, we find the subdifferential of $s_{i+m}$ at $\bar{x},~i+m=1,2,\ldots,q,$ before that we give a remark to difine the subdifferential function $s_{i+m}$ in a $\delta-$neighbourhood.

\noindent
\begin{theorem}\label{sub_i+m}
      For a given uncertain single objective optimization problem $\mathcal{P}(\Omega),$ suppose for a given $\bar{x}$ there exist $k_1, k_2, \ldots, k_q \in \{1,2,\ldots,p\}$ such that $f(\bar{x}, \zeta_{k_1})=f(\bar{x},\zeta_{k_2})=\cdots=f(\bar{x},\zeta_{k_q}).$ Let $\ell$ be the least index in $\{1,2,\ldots,p\}$ for which
     \[
     s_\ell(\bar{x})= f(\bar{x}, \zeta_{k_1})=f(\bar{x},\zeta_{k_2})=\cdots=f(\bar{x},\zeta_{k_q}).
     \] 
Then, the subdifferential of the function $s_{\ell+m}$ at $\bar{x},$  is
\begin{align}\label{s_i+1sub}
\partial s_{\ell+m}(\bar{x}) \subseteq \text{conv}~ \{ \partial f(\bar{x}, \zeta_{k_i}) : i \in \mathcal{J}'_{\ell+m}(\bar{x})\},~~~0 \leq m \leq q-\ell,
\end{align}
where $\mathcal{J}'_{\ell+m}(x)=\mathcal{J}_{\ell}(x)\setminus\{j(x)\}$ and the index set $\mathcal{J}_{\ell}(x)=\{j \in \{1,2,\ldots,q\}: f(x,\zeta_{k_j})=s_{\ell+t}(x),~ 0 \leq t \leq q-\ell\}.$  
Moreover, if each $f(x,\zeta_{k_j}), j=1,2,\ldots,q$ is subdifferentially regular at $\bar{x},$ then $s_{\ell+m}(x)$ is also subdifferentially regular at $\bar{x}$ and equality holds in \emph{(\ref{s_i+1sub})}.
\end{theorem}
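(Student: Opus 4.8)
The plan is to mirror the argument already used for the base case in Theorem \ref{sub_s_i}, namely to realize the order statistic $s_{\ell+m}$ as a pointwise maximum of finitely many locally Lipschitz functions in a neighbourhood of $\bar x$ and then invoke the maximum rule of Theorem \ref{sub_of_max_function}. First I would apply Lemma \ref{lemma_1} to fix a $\delta$-neighbourhood $N_\delta(\bar x)$ on which $s_\ell$ is the maximum of the coincident functions $f(\cdot,\zeta_{k_1}),\ldots,f(\cdot,\zeta_{k_q})$, and then use Remark \ref{repsentation_of_s_i+m} to write, on the same neighbourhood,
\[
s_{\ell+m}(x)=\max\{f(x,\zeta_{k_j}) : j\in\mathcal{J}'_{\ell+m}(x)\},\qquad 0\le m\le q-\ell .
\]
Each branch $f(\cdot,\zeta_{k_j})$ is locally Lipschitz at $\bar x$, hence so is every such maximum.

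The core computation is then exactly the convex-composite analysis of Theorem \ref{sub_s_i}: over any portion of $N_\delta(\bar x)$ on which the active family $\mathcal{J}'_{\ell+m}(x)$ is a \emph{fixed} $(q-m)$-element set $J$, I write $s_{\ell+m}=g\circ h$ with $g(y)=\max_l y_l$ and $h(x)=(f(x,\zeta_{k_j}))_{j\in J}$, and apply Theorem \ref{sub_of_max_function}. The key simplification at $\bar x$ comes from the standing hypothesis $f(\bar x,\zeta_{k_1})=\cdots=f(\bar x,\zeta_{k_q})$: every index surviving into $\mathcal{J}'_{\ell+m}(\bar x)$ attains the common value $s_{\ell+m}(\bar x)$, so the active set of the maximum at $\bar x$ is the whole of $\mathcal{J}'_{\ell+m}(\bar x)$. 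This yields the inclusion \eqref{s_i+1sub}. For the ``moreover'' part, since $g=\max$ is convex and hence subdifferentially regular, the regularity clause of Theorem \ref{sub_of_max_function} (together with the assumed regularity of each $f(\cdot,\zeta_{k_j})$) upgrades the inclusion to equality and shows $s_{\ell+m}$ is itself regular at $\bar x$.

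The hard part, and the place the naive application of Theorem \ref{sub_of_max_function} breaks down, is that the family $\mathcal{J}'_{\ell+m}(x)$ is genuinely $x$-dependent: the discarded index $j(x)$ realizing $s_{\ell+m-1}$ can switch as $x$ crosses $\bar x$, as Example \ref{example_to_find_subdifferential} shows for $s_4$ at $\bar x=4$ (the surviving index jumps from $1$ to $3$). Thus $s_{\ell+m}$ is a maximum over a varying rather than a fixed vector $h$, and one cannot repair this by simply enlarging $J$ to a fixed superset, since a larger fixed maximum overshoots and reproduces a higher order statistic. The device I would use is the limiting-subgradient description of the Clarke subdifferential: every extreme point of $\partial s_{\ell+m}(\bar x)$ is a limit of gradients $\nabla s_{\ell+m}(x_i)=\nabla f(x_i,\zeta_{k_{j_i}})$ taken at differentiability points $x_i\to\bar x$ with $j_i\in\mathcal{J}'_{\ell+m}(x_i)$; passing to a subsequence along which $j_i\equiv j$ is constant, each such limit lies in $\partial f(\bar x,\zeta_{k_j})$ with $j\in\mathcal{J}'_{\ell+m}(\bar x)$, and taking convex hulls gives \eqref{s_i+1sub}. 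Verifying that this limiting-index set is precisely $\mathcal{J}'_{\ell+m}(\bar x)$, and that the coincidence at $\bar x$ keeps every such branch active there, is the step demanding the most care.
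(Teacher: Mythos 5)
Your route to the inclusion \eqref{s_i+1sub} is genuinely different from the paper's. The paper never confronts the $x$-dependence of $\mathcal{J}'_{\ell+m}(x)$ the way you do: after the same setup (Lemma \ref{lemma_1} and Remark \ref{repsentation_of_s_i+m}), it writes $s_{\ell+m}=g_{\ell+m}\circ h$ on $N_\delta(\bar{x})$ with the \emph{fixed} inner map $h(x)=(f(x,\zeta_{k_1}),\ldots,f(x,\zeta_{k_q}))$ collecting all $q$ coincident branches, and with the order-statistic outer function $g_{\ell+m}(y)=\max\{\{y_l\}\setminus\{g_{\ell+m-1}(y)\}\}$ on $\mathbb{R}^q$; it computes $\partial g_{\ell+m}(h(\bar{x}))$ by hand and then invokes the chain rule (Theorem 3.19 of Bagirov et al.) to land on \eqref{s_i+1sub}. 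Your substitute---Rademacher points, a constant-index subsequence, and upper semicontinuity of each $\partial f(\cdot,\zeta_{k_j})$---does deliver the inclusion, provided you justify the asserted identity $\nabla s_{\ell+m}(x_i)=\nabla f(x_i,\zeta_{k_{j_i}})$ at differentiability points (a finite-selection pigeonhole argument over directions, or the fact that Lipschitz functions agreeing on a set have equal gradients a.e.\ on it). Notably, your ``limiting active set'' reading of $\mathcal{J}'_{\ell+m}(\bar{x})$ is exactly the one the paper itself uses when it computes $\partial s_4(4)=\mathrm{conv}\{1,-\tfrac{1}{2}\}$ in Example \ref{example_to_find_subdifferential}, so the identification step you flag as delicate is the right reading of the (ambiguously defined) index set.

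The genuine gap is the ``moreover'' clause. Your only argument for equality in \eqref{s_i+1sub} and for regularity of $s_{\ell+m}$ is the regularity clause of Theorem \ref{sub_of_max_function} applied on portions where $\mathcal{J}'_{\ell+m}(x)$ is a fixed set---precisely the reduction you disavow one paragraph later---and the gradient-limit device you put in its place yields only the upper inclusion, never the reverse inclusion nor $s'_{\ell+m}(\bar{x};d)=s^{\circ}_{\ell+m}(\bar{x};d)$. So the proposal as written proves only half of the theorem. Be aware that this half cannot be repaired by copying the paper either: the paper derives regularity from the claim that $g_{\ell+m}$ is convex, which is false for $m\ge 1$ (for $q=2$, $g_{\ell+1}$ is the minimum of the two coordinates, a concave function), and the regularity conclusion itself fails in simple cases. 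For instance, take $f(x,\zeta_1)=1+x$ and $f(x,\zeta_2)=1-x$ on $[-\tfrac{1}{2},\tfrac{1}{2}]$, both smooth and hence subdifferentially regular at $\bar{x}=0$; then $\ell=1$, $q=2$, and $s_2(x)=1-|x|$ satisfies $s'_2(0;1)=-1<1=s^{\circ}_2(0;1)$, so $s_2$ is not regular at $0$ even though equality of the two sets happens to hold there. Your instinct to distrust the fixed-index-set reduction was therefore correct, but for the ``moreover'' part the honest conclusion is that it needs additional hypotheses (or a counterexample), not a different proof.
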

\begin{proof}
To find the subdifferential of the function $s_{\ell+m},$ from Lemma \ref{lemma_1}, there exists a\\ $\delta$-neighbourhood $N_{\delta}(\bar{x})$ at $\bar{x}$ such that (\ref{sl_x_in_delta}) holds. Let us consider an index set $\mathcal{J}_{\ell}(x)=\{j \in \{1,2,\ldots,q\}: f(x,\zeta_{k_j})=s_{\ell+t}(x),~ 0 \leq t \leq q-\ell,\text{ for all } x \in N_\delta(\bar{x})\}.$ If $f(x,\zeta_{k_{j(x)}})=s_{\ell+m-1}(x) \text{ for all }\\ x \in N_{\delta}(\bar{x}),$ then the function $s_{\ell+m},~ 0 \leq m \leq q-\ell$ can be represented by \[s_{\ell+m}(x)= \max~\{f(x,\zeta_{k_j}) : j \in \mathcal{J}'_{\ell+m}(x) \text{ for all } x \in N_{\delta}(\bar{x})\},\]
   where $\mathcal{J}'_{\ell+m}(x)=\mathcal{J}_{\ell}(x)\setminus\{j(x)\}.$\\
Now, consider the functions $g_{\ell+m} : \mathbb{R}^q \to \mathbb{R}$ and $h : \mathcal{X} \to \mathbb{R}^q$ given by 
\begin{align*}
    &g_{\ell+m}(y)= \underset{l=1,2,\ldots,q}{\max}\{\{y_l\}\setminus\{g_{\ell+m-1}(y)\}\}~~\text{ and }~~
    h(x)=(f(x,\zeta_{k_1}), f(x,\zeta_{k_2}),\dots,f(x,\zeta_{k_q})).
\end{align*}
then $s_{\ell+m}(x)=(g_{\ell+m} \circ h)(x)$ in $N_{\delta}(\bar{x}).$ Note that for all $y,y' \in \mathbb{R}^q$ and $\mu \in [0,1],$ we have
\begin{align*}
    g_{\ell+m}(\mu y +(1-\mu)y') &= \underset{l=1,2,\ldots,q}{\max}\{\{\mu y_l +(1-\mu)y'_l\}\setminus\{g_{\ell+m-1}(\mu y +(1-\mu y')\}\}\\
    & \leq \mu \underset{l=1,2,\ldots,q}{\max}\{\{y_l\}\setminus\{g_{\ell+m-1}(y)\}+(1-\mu)\underset{l=1,2,\ldots,q}{\max}\{\{y'_l\}\setminus\{g_{\ell+m-1}(y')\}\\
    &=\mu g_{\ell+m}(y)+(1-\mu)g_{\ell+m}(y'),
\end{align*}
which means that $g_{\ell+m}(y)$ is convex and by Theorem $2.22$ (i) in {\cite{Bagirov}}, it is locally Lipschitz continuous at $h(\bar{x}).$ Let $\mathcal{L}_{\ell+m}(y)=\{l \in \{1,2,\ldots,q\}:y_l = g_{\ell+m}(y)\}.$
Then, the directional derivative of $g_{\ell+m}(y)$ is
\allowdisplaybreaks
\begin{align*}
    g'_{\ell+m}(y;d)&= \underset{t \to 0}{\lim}~ \frac{g_{\ell+m}(y+td)-g_{\ell+m}(y)}{t}\\
    & = \underset{t \to 0}{\lim} \underset{l=1,2,\ldots,q}{\max}\frac{\{\{y_l+td_l\}\setminus\{g_{\ell+m-1}(y)\}\}-g_{\ell+m}(y)}{t}\\
    &= \underset{t \to 0}{\lim} \underset{l \in \mathcal{L}_{\ell+m}(y)}{\max} \frac{y_l+td_l-y_l}{t}\\
    &= \underset{l \in \mathcal{L}_{\ell+m}(y)}{\max} ~d_l,
\end{align*}
and by Theorem $3.8$ (i) in {\cite{Bagirov}}, we have $g^{\circ}_{\ell+m}=g'_{\ell+m},$ which gives
$$\partial g_{\ell+m}(y)=\{\beta \in \mathbb{R}^q : \underset{l \in \mathcal{L}_{\ell+m}(y)}{\max}~ d_l \geq \beta^{\top}d~\text{ for all }~ d \in \mathbb{R}^q\}$$
This implies that any $\beta = (\beta_1,\beta_2,\ldots,\beta_q)$ in $\partial g_{\ell+m} (y)$ satisfies $\beta_l \geq 0, l=1,2,\ldots,q, \beta_l=0, \text{ if } l \notin \mathcal{L}_{\ell+m} (y),$ and $\sum_{l=1}^q \beta_{l}=1,$ and so we can calculate the clarke subdifferential of $g_{\ell+m}(y)$ at $h(\bar{x}) \in \mathbb{R}^q$ by 
\begin{align*}
\partial g_{\ell+m}(h(\bar{x}))&= \left\{\beta \in \mathbb{R}^q : \beta_l \geq 0, ~~\sum_{l=1}^{q}\beta_l=1~~\text{and}~~\beta=0 ~\text{if}~l \notin \mathcal{L}_{\ell+m}(\bar{x})\right\}\\
\iff \partial g_{\ell+m}(h(\bar{x}))&= \left\{\beta \in \mathbb{R}^q : \beta_j \geq 0, ~~\sum_{j=1}^{q}\beta_j=1~~\text{and}~~\beta=0 ~\text{if}~j \notin \mathcal{J}_{\ell+m}'(\bar{x})\right\}.
\end{align*}
Applying Theorem $3.19$ in {\cite{Bagirov}} on $s_{\ell+m}(x),$ we get
\begin{align*}
    \partial s_{\ell+m}(\bar{x}) &\subseteq \text{conv}~ \left\{\sum_{j=1}^{q}\beta_j \zeta_j : \zeta_j \in \partial h_j(\bar{x})~~\text{and}~~\beta \in \partial g_{\ell+m}(h(\bar{x}))\right\}\\
  & =\text{conv}~ \left\{\underset{j \in \mathcal{J}_{\ell+m}'(\bar{x})}{\sum}\beta_j \partial f(\bar{x},\zeta_{k_j}): \beta_j \geq 0 ~~\text{and}~~\underset{j \in \mathcal{J}_{\ell+m}'(\bar{x})}{\sum}\beta_j =1\right\}\\
  & =\text{conv} \left\{\partial f(\bar{x}, \zeta_{k_j}): j \in \mathcal{J}_{\ell+m}'(\bar{x})\right\}.
\end{align*}
Moreover, we suppose that each $f(x,\zeta_{k_j}),  j=1,2,\ldots,q,$ is subdifferentially regular at $\bar{x}.$ As $g_{i+m}(y)$ is convex, so by Theorem $3.13$ (ii) in {\cite{Bagirov}}, it is subdifferentially regular at $h(\bar{x}).$ Hence, Theorem $3.20$ (i) in \cite{Bagirov} implies that $s_{\ell+m}(x)$ are subdifferentially regular at $\bar{x}$ and equality holds in \eqref{s_i+1sub}.
\end{proof}
\begin{example}
Consider $f(x,\zeta_i), \text{ and } s_i,~ i=1,2,\ldots,5,$ are as given in  Example \ref{example_to_find_subdifferential}. We calculate Clarke subdifferential of each function $s_i, i=1,2,\ldots,5$ at $\bar{x}=4.$
Let $\delta=\tfrac{1}{2}.$ In the $\delta$-neighbourhood,

\begin{enumerate}
\item[(i)] For $i=1,$ $s_1(4)=f(4,\zeta_5).$ From Lemma \ref{lemma_1}, we have 
$s_1(x)=-\tfrac{x-4}{3}+5, \text{ for all } x \in [3.5,4.5].$ Clearly, $\partial s_1(4)=\{-\tfrac{1}{3}\}.$

\item[(ii)] For $i=2,$ we have 
\begin{align*}
    s_2(x)&=\max~\{f(x,\zeta_1),f(x,\zeta_2),f(x,\zeta_3)\}\\
         &=\max\left\{x, |\tfrac{x-4}{3}|+4, -|\tfrac{x-4}{2}|+4\right\}~~~\text{ for all } x \in [3.5,4.5]. 
\end{align*}
Therefore, from Theorem \ref{sub_s_i}, $\partial s_2(4)=\text{conv}\left\{1,[-\tfrac{1}{3},\tfrac{1}{3}],[\tfrac{-1}{2},\tfrac{1}{2}]\right\}=\left[\tfrac{-1}{2},1\right].$

\item[(iii)] For $i=3,$ we have 
\begin{align*}
    s_3(x)&=\begin{dcases}
        -|\tfrac{x-4}{2}|+4, &~\text{if}~ x \in [3.5,4] \\
        |\tfrac{x-4}{3}|+4, &~\text{if}~ x \in  [4,4.5]. 
    \end{dcases}
\end{align*}
Therefore, from Theorem \ref{sub_i+m}, $\partial s_3(4)=\text{conv}\left\{\tfrac{1}{2},\tfrac{1}{3}\right\}=\left[\tfrac{1}{3},\tfrac{1}{2}\right].$

\item[(iv)] For $i=4,$ we have 
\begin{align*}
    s_4(x)&=\begin{dcases}
        x, &~\text{if}~ x \in [3.5,4] \\
        -|\tfrac{x-4}{2}|+4, &~\text{if}~ x \in  [4,4.5]. 
    \end{dcases}
\end{align*}
Therefore, from Theorem \ref{sub_i+m}, $\partial s_4(4)=\text{conv}\left\{1,-\tfrac{1}{2}\right\}=\left[-\tfrac{1}{2},1\right].$

\item[(v)] for $i=5,$ $s_5(4)=f(4,\zeta_4).$ From Lemma \ref{lemma_1}, we have 
\begin{align*}
    s_5(x)&=(x-4)^2+1,~~~\text{if}~ x \in [3.5,4.5].
\end{align*}
Therefore, from Theorem \ref{sub_s_i}, $\partial s_5(4)=\{0\}.$
\end{enumerate}
\end{example}

\section{Conclusion and future scopes}\label{conclusion}
In this article, we have given a new concept of robustness for solving uncertain single objective optimization problems called GOWA robustness. Also, some important characteristics of the GOWA objective function, like continuity [Theorem \ref{continuity}], coerciveness [Theorem \ref{coeirciveness}], monotonicity  [Theorem \ref{monotonicity_of_phi}], and its behaviour as $\lambda \to 0+$ and $\lambda \to 1$ [Note \ref{lambdazero}] have been presented. Next, we have shown how GOWA robustness is related to other existing robustness like min-max [Proposition \ref{min-maxthm}], min-min [Proposition \ref{min-minthm}], and lightly robustness [Proposition \ref{light_robust_optimality}]. In Theorem \ref{boundbymin-max}, we have concluded that the optimal value of an uncertain single objective optimization problem by GOWA robustness always lies between min-min and min-max robustness. As it is found that GOWA objective function need not be differentiable [Example \ref{GOWA_nondifferentiable}], we have given an analysis to find the Clarke subdifferential set [Theorems \ref{sub_s_i} and \ref{sub_i+m}] for GOWA objective function.

Further research can be carried out to discuss suitable models in real-world applications. In future research, we would like to focus on the following directions:
\begin{itemize}
    \item Current study motivates us to analyze several conventional concepts of robustness in greater depth to find the solution to uncertain optimization problems. Various algorithms may serve as valuable tools for capturing GOWA robust optimal solutions.
    \item We will try to develop algorithms, like A. Upadhayay et al. developed cone method in \cite{Upadhayay2023}, to capture solutions to uncertain optimization problems.
    \item Moreover, future studies can explore the theoretical properties as well as the experimental performance of the GOWA robustness for uncertain single-objective optimization problems. 
    \item With the help of the Clarke subdifferential analysis of the GOWA robust objective function, one can endeavour to perform rigorous nonsmooth optimization of single-objective uncertain optimization problems. 
    \item One can introduce and analyze the concept of GOWA robust efficient solutions for multi-objective uncertain optimization problems. It will convert into a set optimization problem, then use methods like those given by K. Kumar et al. in \cite{K. Kumar}. 
\end{itemize}

\section*{Acknowledgement}
\noindent
Debdas Ghosh acknowledges the financial support of the research projects\\ MATRICS (MTR/2021/000696) and CRG (CRG/2022/001347) from the Science and Engineering Research Board, India. Nand Kishor acknowledges a research fellowship from the University Grants Commission (Government of India).

%


\end{document}